\theoremstyle{plain}
\newtheorem{prop}{Proposition}[section]
\newtheorem{thm}[prop]{Theorem}
\newtheorem{coro}[prop]{Corollary}
\newtheorem{lemm}[prop]{Lemma}
\newtheorem*{thm-int}{Theorem}
\theoremstyle{remark}
\newtheorem{exam}[prop]{Example}
\newtheorem{rema}[prop]{Remark}
\theoremstyle{definition}
\newtheorem{defn}[prop]{Definition}
\numberwithin{equation}{section}
\def\Gal{{\it G}}
\def\G{{\mathcal G}}   % abel. pro-l-Galois group 
\def\D{{\mathcal D}}   % decomposition group
\def\I{{\mathcal I}}   % inertia groups
\def\Val{{\mathcal V}}
\def\Spec{{\mathrm{Spec}}}
\def\DVal{{\mathcal D}{\mathcal V}}
\def\KK{\boldsymbol{K}}
\def\no{\noindent}
\def\rk{{\rm rk}}
\newcommand{\trdeg}{{\rm tr}\, {\rm deg}}
\def\rR{{\mathrm R}}
\newcommand{\Hom}{{\rm Hom}}
\newcommand{\Ker}{{\rm Ker}}
\def\lra{\longrightarrow}
\def\ra{\rightarrow}
\def\F{{\mathbb F}}
\def\P{{\mathbb P}}
\def\Q{{\mathbb Q}}
\def\Z{{\mathbb Z}}
\def\N{{\mathbb N}}
\def\cZ{{\mathcal Z}}
\def\bP{{\mathbb P}}
\def\GL{{\rm GL}}
\def\Ind{{\rm Ind}}
\def\rK{{\mathrm K}}
\def\rH{{\rm H}}
\def\rI{{\rm I}}
\def\Syl{{\rm Syl}}
\begin{document}
\title[Universal spaces]{Universal spaces for unramified Galois cohomology}
\author{Fedor Bogomolov}
\address{Courant Institute of Mathematical Sciences, N.Y.U. \\
 251 Mercer str. \\
 New York, NY 10012, U.S.A.\\
 ~Laboratory of Algebraic
Geometry\\
National Research University Higher School of Economics\\
7 Vavilova str.\\
Moscow, 117312, Russia}
\email{bogomolo@cims.nyu.edu}

\author{Yuri Tschinkel}
\address{Courant Institute of Mathematical Sciences, N.Y.U. \\
 251 Mercer str. \\
 New York, NY 10012, U.S.A.}
\email{tschinkel@cims.nyu.edu}

\address{Simons Foundation\\
160 Fifth Avenue\\
New York, NY 10010\\
USA}

\keywords{Galois groups, function fields}

\begin{abstract}
We construct and study 
universal spaces for birational invariants of algebraic varieties over algebraic closures of finite fields.  
\end{abstract}

\maketitle

\setcounter{section}{0}       
\section*{Introduction}
\label{sect:introduction}

Let $\ell$ be a prime. Recall that in topology, there exist unique (up to homotopy) 
topological spaces $\rK(\Z/\ell^n,m)$ such that 
\begin{itemize}
\item $\rK(\Z/\ell^n,m)$ is homotopically trivial up to dimension $m-1$, 
in particular, 
$$
\rH^i(\rK(\Z/\ell^n,m), \Z/\ell^n)=0, \quad  \text{ for } \quad i < m;
$$
\item $\rH^m(\rK(\Z/\ell^n,m), \Z/\ell^n)$ is cyclic, with a distinguished
generator $\kappa_m$;
\item for every topological space $X$ and every 
$\alpha \in \rH^m(X,\Z/\ell^n)$ there is a unique, 
up to homotopy, continuous map 
$$
\mu_{X,\alpha} \,:\,  X\ra \rK(\Z/\ell^n,m)
$$
such that 
$$
\mu_{X,\alpha}^*(\kappa_m)=\alpha.
$$ 
\end{itemize}
This reduces many questions about singular cohomology 
to the study of these universal spaces. 
Analogous theories exist for other contravariant functors, for example, 
topological $\rK$-theory, or the theory of cobordisms. 
The study of moduli spaces in algebraic geometry 
can be viewed, broadly speaking, 
as an incarnation of the same idea of universal spaces.

Here we propose a similar theory for unramified cohomology, 
developed in connection with the study of birational
properties of algebraic varieties \cite{bog-stable}, \cite{colliot-ojan}. 
The Bloch--Kato conjecture proved 
by Voevodski, Rost, and Weibel,
combined with techniques and results
from birational anabelian geometry  in \cite{bt0},
implies that an unramified class in the
cohomology of a function field $K=k(X)$ of an algebraic variety $X$ 
over an algebraic closure of a finite field 
$k=\bar{\mathbb F}_p$, with finite constant coefficients,  
is induced from the cohomology of 
a finite abelian group $G^a$. 
Our main result is:

\begin{thm-int}
\label{thm:main-intro}
Let $\ell$ and $p$ be distinct primes,
$K=k(X)$ the function field of an algebraic variety 
$X$ of dimension $\ge 2$ over $k=\bar{\mathbb F}_p$, $G_K$ 
its absolute Galois group, 
and $\alpha_K\in \rH^*_{nr}(G_K,\Z/\ell^n)$ an unramified class.
Then there exists a finite set $J$ 
of finite-dimensional $k$-vector spaces $V_j, j\in J$, such that 
$\alpha_K$ is induced, via a rational map, from an unramified class 
in the cohomology of an explicit open subset of the quotient of 
$$
\mathbb P:=\prod_{j\in J} \mathbb P(V_j)
$$ 
by a finite abelian $\ell$-group $G^a$, 
acting projectively on each factor.
\end{thm-int}

Thus, the spaces $\mathbb P/G^a$ serve as universal 
spaces for {\em all} finite birational invariants 
of algebraic varieties over $k=\bar{\mathbb F}_p$. 

Actions of finite abelian groups $G^a$ on products of projective spaces 
are described by central extensions of $G^a$, i.e., 
by subspaces in $\wedge^2(G^a)$. This allows to 
present unramified classes of $X$ in terms of 
configurations of subspaces of skew-symmetric matrices. 
For example, if the unramified Brauer group of $X$ is trivial, then 
all finite birational invariants of $X$  
are encoded already in the combinatorics of 
configurations of liftable
subgroups in finite abelian quotients of 
the absolute Galois group $G_K$  (see 
Section~\ref{sect:general} for the definition).

Similar ideas were first put forward in \cite{bog-stable} and \cite{bog-goe},
however, the recent proof of the Bloch--Kato conjecture 
allows us to formulate a more precise and constructive theory. 
This approach to birational invariants leads to many new questions:

\begin{itemize}
\item
Is there a smaller class of configurations with this universal property?
\item 
How does this structure interact with Sylow subgroups of $G_K$?
\item 
Is there an extention to cohomology with $\Z_{\ell}$-coefficients?
An equally simple
description of models for $\ell$-adic 
invariants would provide insights into higher-dimensional
Langlands correspondence.                          
\item 
What are the analogs of universal spaces for varieties over $k=\bar{\Q}$?
Counterexamples to our main result arise from bad reduction places, 
already for abelian varieties \cite{bog-stable}. 
\end{itemize}

Here is the roadmap of the paper: In Section~\ref{sect:general} we recall basic facts about 
stable and unramified cohomology. In Section~\ref{sect:valuation} we provide some background on 
valuation theory.  
In Section~\ref{sect:galois} we investigate Galois cohomology groups of function fields of higher-dimensional 
algebraic varieties over $k=\bar{\mathbb F}_p$ and their images in cohomology of finite groups. 
In Section~\ref{sect:unram-cohomology} we introduce and study unramified cohomology of algebraic varieties. 
Section~\ref{sect:proof} contains the proof of our main theorem, 
modulo geometric considerations presented in Sections~\ref{sect:smooth} and \ref{sect:singular}.

\

\no
{\bf Acknowledgments.}
We are grateful to A. Pirutka for her interest and insightful comments.  
The first author was supported by NSF grant DMS-1001662 and by AG Laboratory GU-HSE grant
RF government ag. 11 11.G34.31.0023.
The second author was partially supported by NSF grants DMS-0739380, 
0901777, and 1160859.

\section{Stable cohomology}
\label{sect:general}

Let $G$ be a pro-finite group.
We will write 
$$
G^a=G/[G,G] \quad \text{ and } \quad G^c = G/[[G,G],G]
$$
for the abelianization, respectively, the second lower central series quotient of $G$. 
We have a canonical central extension
\begin{equation}
\label{eqn:centrall}
1\ra Z\ra G^c\stackrel{\pi_a}{\longrightarrow} G^a\ra 1.
\end{equation}

Let $M$ be a topological $G$-module and
$\rH^i(G,M)$ its (continuous) $i$-cohomology group.
These groups are contravariant with respect to $G$ and covariant
with respect to $M$. 
In this paper, $G$ is either a finite group
or a Galois group (see \cite{milgram} for background on group cohomology and 
\cite{serre} for background on Galois cohomology) and $M$ either $\Z/\ell^n$ or $\Q/\Z$,
with trivial $G$-action. We will sometimes omit the coefficient module $M$ from 
the notation. 

Our goal is to investigate incarnations of Galois cohomology of 
function fields in cohomology of finite groups.
For example, let $K=k(X)$ be a function field of an 
algebraic variety $X$ over an algebraically closed field $k$; 
varieties birational to $X$ are called {\em models} of $K$.
Let $G_K$ be the absolute Galois group of $K$ and $\hat{\pi}_1(X)$ the \'etale 
fundamental group of $X$, with respect to some basepoint. The choice of a  base point 
will not affect our considerations and we omit it from our notation. 
We have natural homomorphisms
$$
\rH^*(\hat{\pi}_1(X))\stackrel{\kappa^*_X}{\lra} \rH^*_{et}(X) \stackrel{\tilde{\eta}^*_X}{\lra} \rH^*(G_K),
$$
where the right arrow arises from the embedding of the generic point $X_{\eta}\ra X$. 
We will write
$$
\eta_X:G_K \ra \hat{\pi}_1(X)
$$
and 
$$
\eta^*_X=\tilde{\eta}^*_X\circ \kappa_X^*:  \rH^*(\hat{\pi}_1(X))\ra \rH^*(G_K)
$$
for the corresponding map in cohomology. 

We say that a class $\alpha_K\in \rH^*(G_K)$ is defined (or represented) on a model $X$ of $K$ if there exists a class 
$\alpha_X\in \rH^*_{et}(X)$ such that
$$
\alpha_K=\tilde{\eta}_X^*(\alpha_X).
$$
Let $G$ be a finite group. A continuous homomorphism
$$
\chi: \hat{\pi}_1(X) \ra G
$$
gives rise to homomorphisms in cohomology
$$
\rH^*(G)\stackrel{\chi^*}{\lra} \rH^*(\hat{\pi}_1(X)) \stackrel{\eta_X^*}{\lra} \rH^*(G_K).
$$
Conversely, every $\alpha_K\in \rH^*(G_K)$ arises in this way:  
there exist
\begin{itemize}
\item a model $X$ of $K$,
\item a continuous homomorphism $\chi$ as above,
\item and a class 
$\alpha_G\in \rH^*(G)$
\end{itemize}
such that
$$
\alpha_K=\eta_X^*(\chi^*(\alpha_G)).
$$
This follows from the description of \'etale cohomology of points, see \cite{milne}.
In such situations we say that $\alpha_K$ is defined on $X$ and is induced 
from $\chi$.

A version of this construction arises as follows:
assume that the characteristic of $k$ does not divide the order of $G$. 
Let $V$ be a faithful representation of $G$ over $k$, and  
$X$ an algebraic variety over $k$ with function field 
\[
K=k(X)\simeq k(V)^G,
\] 
the field of invariants; we will write $X=V/G$ and call it a {\em quotient}. 
Even more generally, let 
$Y$ be an algebraic variety over $k$ with a generically free action of $G$,
$X=Y/G$ a quotient.
This situation gives rise to a natural surjective continuous homomorphism
\[
G_K\ra G
\]
and induced homomorphisms on cohomology 
$$
s^i_K \,:\, \rH^i(G)\ra \rH^i(G_K).
$$
The following lemma shows that we have many choices in realizing a class 
$\alpha_K\in \rH^i(G_K)$:

\begin{lemm}
\label{lemm:vg}
\cite{bog-stable}
Assume that $\alpha_K\in \rH^i(G_K)$ is represented by a class $\alpha_X\in \rH^i_{et}(X)$ on 
some affine irreducible model $X$ of $K$ and is induced
from a surjective continuous homomorphism $\chi:\hat{\pi}_1(X)\ra G$ and a class $\alpha_G\in \rH^i(G)$. 
Let $V$ be a faithful representation of $G$ over $k$ and 
$V^\circ\subset V$
the locus where the action is free.
Then, for every $x\in X$ and $v\in V^\circ$ there exists a map 
$$
f=f_{x} : X\to V/G
$$
such that
\begin{itemize}
\item $f(x)= v$ and 
\item the restriction of $\alpha_X$ to $X^\circ= f^{-1}(V^\circ/G)\subset X$ 
is equal to $f^*(\alpha_G)$.
\end{itemize}
\end{lemm}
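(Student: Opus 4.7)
The plan is to build $f$ by descending a $G$-equivariant morphism $\tilde f : \tilde X \to V$, where $\pi : \tilde X \to X$ denotes the connected étale Galois $G$-cover classified by the surjection $\chi$. Since $X$ is affine irreducible and $\chi$ is surjective, $\tilde X$ is affine irreducible with a free $G$-action and quotient $X$. I fix a closed point $\tilde x \in \pi^{-1}(x)$; its orbit $G \cdot \tilde x$ consists of $|G|$ distinct closed points of $\tilde X$.

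The core step is to realize a prescribed $G$-equivariant map from the free orbit $G \cdot \tilde x$ into $V$ as the restriction of a $G$-equivariant morphism $\tilde X \to V$. A $G$-equivariant morphism $\tilde X \to V$ is the same datum as a $G$-equivariant $k$-linear map $V^* \to k[\tilde X]$ (extended to $\Sym V^*$), i.e., an element of $(V \otimes_k k[\tilde X])^G$. Because $G \cdot \tilde x$ is a finite closed $G$-invariant subscheme of the affine $\tilde X$, the restriction $k[\tilde X] \to k[G \cdot \tilde x]$ is surjective. Tensoring with $V$ over $k$ preserves surjectivity, and since $\ch(k) \nmid |G|$, Maschke's theorem makes $(-)^G$ exact, so
\[
(V \otimes_k k[\tilde X])^G \twoheadrightarrow (V \otimes_k k[G \cdot \tilde x])^G.
\]
The right-hand side is canonically identified with $V$ via evaluation at $\tilde x$ (Frobenius reciprocity for the regular representation $k[G \cdot \tilde x] \cong k[G]$). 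Hence for any $v \in V^\circ$ there exists an equivariant $\tilde f$ with $\tilde f(\tilde x) = v$. Descending, we obtain $f = f_x : X \to V/G$ with $f(x) = v$ in $V/G$, so $x \in X^\circ := f^{-1}(V^\circ/G)$ and $X^\circ$ is nonempty.

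For the cohomological assertion, note that over $X^\circ$ the étale $G$-cover $\tilde X|_{X^\circ} \to X^\circ$ carries the equivariant morphism to $V^\circ$ restricted from $\tilde f$. The universal property of the $G$-torsor $V^\circ \to V^\circ/G$ identifies this cover canonically with the fiber product $X^\circ \times_{V^\circ/G} V^\circ$; consequently the classifying homomorphism $\hat{\pi}_1(X^\circ) \to G$ obtained by restricting $\chi$ factors as $\hat{\pi}_1(X^\circ) \to \hat{\pi}_1(V^\circ/G) \to G$, where the second arrow classifies the universal torsor. Pulling back $\alpha_G$ along this factorization, together with the given $\alpha_X = \eta_X^*(\chi^*(\alpha_G))$, yields $\alpha_X|_{X^\circ} = f^*(\alpha_G)$ as required.

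The main obstacle is the construction of the equivariant $\tilde f$ with prescribed value at $\tilde x$: the value constraint and the equivariance constraint must be reconciled simultaneously. The running hypothesis $\ch(k) \nmid |G|$ is precisely what makes this lifting possible through Maschke; once $\tilde f$ has been produced, the identification of torsors on $X^\circ$ and the resulting cohomological compatibility are purely formal.
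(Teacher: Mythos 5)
Your proposal is correct and follows the same overall strategy as the paper: both fix $\tilde x\in\pi^{-1}(x)$ in the $G$-cover $\pi:\tilde X\to X$ classified by $\chi$, construct a $G$-equivariant morphism $\tilde f:\tilde X\to V$ with $\tilde f(\tilde x)=v$, descend it to $f:X\to V/G$, and deduce the cohomological compatibility on $X^\circ$ from the identification of $G$-covers (paper: ``$\pi_G$ and $\pi$ induce the same cover $\pi_0$''; you: the factorization of the classifying map through $\hat{\pi}_1(V^\circ/G)$). The only genuine difference is in how the equivariant lift with prescribed value is produced. You argue abstractly: $\Hom_G(V^*,k[\tilde X])=(V\otimes_k k[\tilde X])^G$ surjects onto $(V\otimes_k k[G\cdot\tilde x])^G\cong V$ because restriction $k[\tilde X]\twoheadrightarrow k[G\cdot\tilde x]$ is onto and $(-)^G$ is exact by Maschke. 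The paper instead builds the lift explicitly, without invoking Maschke: choose $h\in k[\tilde X]$ with $h(\tilde x)=1$ and $h(g\cdot\tilde x)=0$ for $g\neq e$; the translates $g\cdot h$ are linearly independent (their restrictions to the orbit form the standard basis), so $W^*:=k[G]\cdot h\subset k[\tilde X]$ is a copy of the regular representation, yielding a $G$-map $\tilde X\to\Spec(\Sym W^*)\cong k[G]$ sending $\tilde x\mapsto e$; post-composing with a $G$-linear $l_v:k[G]\to V$, $e\mapsto v$, gives $\tilde f$. The paper's route is thus more constructive and does not use semisimplicity in this step (the freeness of $k[G]$ does the work), whereas yours packages the existence statement into a clean Maschke/Frobenius-reciprocity argument; under the standing hypothesis $\ch(k)\nmid|G|$ both are valid, and yours is a perfectly good alternative.
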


\begin{proof}
We follow the proof in \cite{bog-stable}.
The homomorphism $\chi:\hat{\pi}_1(X)\to G$ defines a finite
\'etale covering $\pi:\tilde X\to X$, by an affine variety
$\tilde{X}$. The ring $k[\tilde X]$ is 
a $k[G]$ -module. Every finite-dimensional $k[G]$-submodule 
$W^*\subset k[\tilde X]$ defines
a $G$-equivariant map $\tilde X\to W:=\Spec(W^*)$.

Let $e\in k[G]$ be the unit element of $G$.
For any $G$-orbit $G\cdot y\in V$ there is a $G$-linear homomorphism 
$$
l_y : k[G]\to V,
$$
which maps the orbit $G\cdot e$ to $G\cdot y$. Let $\tilde{x}\in \pi^{-1}(x)$. 
Choose $h\in k[\tilde{X}]$ such that 
$$
h(\tilde{x})=1, \quad h(g \cdot \tilde{x})=0, \quad g\neq e.
$$
Then $h$ generates a $k[G]$-submodule $W\subset k[\tilde X]$
and defines a regular $G$-map $h: \tilde X\to W=k[G]$,
with $h(\tilde{x}) = e\in k[G]$.
The map $f := l_y \circ h $ is a regular $G$-map satisfying the first property.

Let $X^\circ= f^{-1}(V^\circ/G)\subset X$. It is a nonempty affine subvariety.
We have a compatible diagram of $G$-maps

\centerline{
\xymatrix{
V^\circ \ar[d]_{\pi_G} & \ar[l]\tilde{X}^\circ \ar[d]^{\pi_0} & \subset & \tilde X\ar[d]^{\pi}\\
V^\circ/G   & \ar[l]^{\,\,\, f}X^\circ                & \subset & X 
}
}

\noindent
and the maps $\pi_G$ and $\pi$ induce the same cover $\pi_0$. This
implies the second claim.
\end{proof}

We can  achieve even more flexibility for $G$-maps, 
under a {\em projectivity} conditions on $V$: 
we say that a $G$-module $V$ is projective if 
for every finite-dimensional representation $W$ of $G$
with a $G$-surjection 
$$
\mu : W\to V
$$
there exists a $G$-section
$$
\theta :V\to W\quad \text{ with } \quad \mu\circ \theta ={\rm id}.
$$
This condition holds, for example, for regular representations over
arbitrary fields or when the order of $G$ is coprime to the 
characteristic of the ground field $k$.

Now let $\{S_j\}_{j\in J}$ be a finite set of $G$-orbits in
$Y$ with stabilizers $H_j$ so that $S_j\simeq G/H_j$.
Consider a faithful representation $V$ of $G$ and a subset
$\{T_j\}_{j\in J}$ of $G$-orbits in $V$ with stabilizers $Q_j$, with $H_j\subset Q_j$, $T_j=G/Q_j$. 
Consider regular $G$-maps $f_j: S_j\to T_j$, for $j\in J$.

\begin{lemm}
Assume that $V$ is a projective $G$-module. Then 
there is a regular
$G$-map $f: Y\to V$ such that $f=f_j$, for all $j\in J$.
\end{lemm}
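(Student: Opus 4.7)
The plan is to translate the extension question into a $G$-equivariant lifting problem on coordinate rings and then invoke projectivity. Since $G$ is finite, each orbit $S_j \simeq G/H_j$ consists of finitely many closed points, so $S := \bigsqcup_{j\in J} S_j$ is a closed $G$-invariant subscheme of $Y$ with finite-dimensional coordinate ring $k[S] = \bigoplus_{j\in J} k[S_j]$. The collection $\{f_j\}$ amounts to a $G$-equivariant morphism $f_S \colon S \to V$, which, because $V = \Spec \Sym(V^*)$ is affine, is determined by a single $G$-linear map $\phi\colon V^* \to k[S]$. Extending $f_S$ to a regular $G$-map $f\colon Y \to V$ is then equivalent to lifting $\phi$ along the $G$-equivariant surjection of coordinate rings $r\colon k[Y] \to k[S]$ to a $G$-linear map $\tilde\phi\colon V^* \to k[Y]$; such a $\tilde\phi$ extends uniquely to a $G$-algebra homomorphism $\Sym(V^*) = k[V] \to k[Y]$, which defines the desired $f$.

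To perform the lift, consider the short exact sequence $0 \to I \to k[Y] \to k[S] \to 0$ of $G$-modules, where $I = \ker r$. Applying $\Hom_G(V^*, -)$ gives the exact sequence
\[
\Hom_G(V^*, k[Y]) \to \Hom_G(V^*, k[S]) \to \Ext^1_G(V^*, I),
\]
so a lift $\tilde\phi$ of $\phi$ exists provided $V^*$ is projective as a $G$-module. Concretely, I would choose $k$-linear preimages $\tilde v_i \in k[Y]$ of a basis $\{v_i\}$ of $V^*$, form the finite-dimensional $G$-submodule $W_0 \subset k[Y] \times_{k[S]} V^*$ generated by the pairs $(\tilde v_i, v_i)$, apply the projectivity of $V^*$ to split the $G$-surjection $W_0 \to V^*$, and project to $k[Y]$.

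The only subtle point is that the stated hypothesis is projectivity of $V$, whereas the argument needs projectivity of $V^*$. Since $G$ is finite, $k[G]$ is a Frobenius algebra, so on finite-dimensional modules projectivity and injectivity coincide, and $k$-linear duality preserves both; hence $V^*$ is projective as soon as $V$ is, and in the two situations singled out in the excerpt (regular representations, or the case when $|G|$ is coprime to $\char k$) this self-duality is manifest. Once $\tilde\phi$ is produced, the resulting morphism $f\colon Y \to V$ satisfies $f|_{S_j} = f_j$ for every $j \in J$ by construction of $\phi$, completing the proof.
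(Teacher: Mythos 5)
The paper states this lemma without proof, so there is no argument in the source to compare against; let me assess your proof on its own merits. It is correct, and I believe it is the natural argument: encode the prescription on the finite $G$-set $S=\bigsqcup_j S_j$ as a $G$-linear map $\phi\colon V^*\to k[S]$, then lift along the restriction $r\colon k[Y]\to k[S]$, reducing via the fiber product $k[Y]\times_{k[S]}V^*$ to a finite-dimensional $G$-surjection $W_0\to V^*$ so as to invoke the paper's finite-dimensional formulation of projectivity. Two remarks. First, the argument tacitly requires $Y$ to be affine, both so that $r$ is surjective and so that regular $G$-maps $Y\to V$ are precisely the elements of $\Hom_G(V^*,k[Y])$; the paper does not say this, but the preceding Lemma~\ref{lemm:vg} is set over affine $X$, and for projective $Y$ the statement is simply false, so affineness must be implicit and is worth stating. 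Second, you are right to flag that the lift uses projectivity of $V^*$ rather than of $V$, and the Frobenius-algebra argument resolving the mismatch is sound. A variant that uses the stated hypothesis on $V$ directly: the obstruction to lifting a $G$-invariant element of $V\otimes_k k[S]$ to one of $V\otimes_k k[Y]$ lies in $\rH^1\bigl(G,\,V\otimes_k I\bigr)$ with $I=\ker(r)$; since the paper's hypothesis makes $V$ a direct summand of a free $k[G]$-module (split any finite-dimensional free cover of $V$), $V\otimes_k I$ is again a summand of a free module, hence $\rH^1(G,V\otimes_k I)=0$ and the invariant class lifts. Both routes are fine; yours has the advantage of staying entirely within the finite-dimensional framework the paper's definition of projectivity refers to.
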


\

We return to our setup: $X=Y/G$, $K=k(X)$, and $\chi: G_K\ra G$, inducing 
$$
s^i_K: \rH^*(G)\ra \rH^*(G_K).
$$ 
The groups
$$
\rH^i_{s,K}(G) := \rH^i(G) / \Ker(s^i_K)
$$
are called {\em stable cohomology groups} with respect to $K=k(X)$.  
Let 
$$
\Ker(s^i):=\bigcap_{K}\Ker(s^i_K),
$$
over all function fields $K=k(X)$ as above. 
In fact, 
$$
\Ker(s^i)= \Ker(s^i_{k(V/G)}),
$$
for some faithful representation $V$ of $G$ over $k$, in particular, this is independent
of the choice of $V$  (see \cite[Proposition 4.3]{bpt}).
The groups
$$
\rH^i_{s}(G) := \rH^i(G) / \Ker(s^i)
$$
are called {\em stable cohomology groups} of $G$ (with coefficients in 
$M=\Z/\ell^n$ or $\Q/\Z$);
they depend on the ground field $k$.
These define contravariant functors in $G$.
For example, for a subgroup $H\subset G$ we have a restriction homomorphism
$$
\mathrm{res}_{G/H}: \rH^*_s(G)\ra \rH^*_s(H).
$$
Furthermore:

\begin{itemize}
\item 
While usual group cohomology $\rH^i(G)$ can be nontrivial for infinitely many $i$
(even for cyclic groups), {\em stable} cohomology groups $\rH^i_s(G)=0$ for $i > \dim(V)$, 
where $V$ is a faithful representation.  
\item We have
$$
\rH^i_s(G) \subseteq \rH^i_s(\Syl_{\ell}(G))^{N_G(\Syl_{\ell}(G))},
$$ 
where $N_G(H)$ is the normalizer of $H$ in $G$ and $\Syl_{\ell}(G)$ an $\ell$-Sylow subgroup of $G$.  
\end{itemize}

The determination of the stable cohomology ring
$$
\rH^*_s(G) := \oplus_i \rH^i_s(G)
$$
is a nontrivial problem, see, e.g., \cite{b-b} 
for a computation of stable cohomology of alternating groups.  
For finite abelian groups $G$, we have
\begin{equation}
\label{eqn:wehave}
\rH^*_s(G)=\wedge^*(\rH^1(G)).
\end{equation}
For central extensions of finite groups as in \eqref{eqn:centrall}, 
the kernel of 
$$
\pi_a^*:\rH^*_s(G^a)\ra \rH^*_s(G^c)
$$
is the ideal $\rI=\rI(G^c)$ generated by 
$$
\rR^2=\rR^2(G^c):=\Ker\left(\rH^2_s(G^a)\ra \rH^2_s(G^c)\right).
$$
(see, for example, \cite[Section 8]{bt-msri}).
We will identify 
$$
\rH^*_s(G^a) / \rI
$$
with its image in $\rH^*_s(G^c)$. 
An important role in the computation of 
this subring of $\rH^*_s(G^c)$ is played by the {\em fan}
$$
\Sigma=\Sigma(G^c)=\{\sigma\},
$$
the set of {\em noncyclic} liftable subgroups 
$\sigma$ of $G^a$, and the {\em complete fan} 
$$
\bar{\Sigma}=\bar{\Sigma}(G^c) =\{\sigma\},
$$
consisting of {\em all} liftable subgroups $\sigma\subset G^c$:
a subgroup $\sigma$ is {\em liftable} if and only if the full preimage $\tilde{\sigma}$ of $\sigma$ in 
$G^c$ is abelian. 
The fan $\Sigma$ defines a subgroup 
$\rR^2(\Sigma)\subseteq \rH^2_s(G^a)$
as the set of all elements which vanish upon restriction to every $\sigma \in \Sigma$. 
We have 
$$
\rR^2\subseteq \rR^2(\Sigma).
$$

\begin{lemm}
\label{lemm:what-for}
For every $\alpha \in \rI(G^c)\subseteq \rH^*_s(G^a)$ and every $\sigma \in \Sigma(G^c)$ 
the restriction of $\alpha $ to $\sigma$ is trivial. 
\end{lemm}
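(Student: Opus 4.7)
The plan is to reduce the lemma to the definition of $\rR^2(\Sigma)$ together with the containment $\rR^2 \subseteq \rR^2(\Sigma)$ recorded just above the statement, using multiplicativity of restriction.

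First, I would unwind the definition of $\rI=\rI(G^c)$: by construction it is the ideal of the graded ring $\rH^*_s(G^a) = \wedge^*\rH^1(G^a)$ generated by the subspace $\rR^2(G^c) \subseteq \rH^2_s(G^a)$. Consequently, any $\alpha \in \rI(G^c)$ admits a finite decomposition
$$
\alpha \;=\; \sum_{i} \beta_i \smile \gamma_i, \qquad \beta_i \in \rR^2(G^c),\ \gamma_i \in \rH^*_s(G^a).
$$
For any $\sigma \in \Sigma(G^c)$, viewed as a noncyclic liftable subgroup $\sigma \subset G^a$, the inclusion $\iota_\sigma\colon \sigma \hookrightarrow G^a$ induces a ring homomorphism $\iota_\sigma^* \colon \rH^*_s(G^a) \to \rH^*_s(\sigma)$, and applying it to the decomposition above yields
$$
\iota_\sigma^*(\alpha) \;=\; \sum_i \iota_\sigma^*(\beta_i) \smile \iota_\sigma^*(\gamma_i).
$$

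To conclude, I would invoke the containment $\rR^2 \subseteq \rR^2(\Sigma)$. By the very definition of $\rR^2(\Sigma)$, every element of $\rR^2(G^c) \subseteq \rR^2(\Sigma)$ restricts trivially to each $\sigma \in \Sigma$; in particular $\iota_\sigma^*(\beta_i)=0$ for all $i$. Every summand on the right of the displayed identity then vanishes, and so $\iota_\sigma^*(\alpha)=0$, as claimed.

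There is essentially no obstacle in this argument: the only nontrivial input is the cited relation $\rR^2 \subseteq \rR^2(\Sigma)$, and the passage from the generating set $\rR^2$ to the full ideal $\rI$ is immediate from the fact that restriction along a subgroup inclusion is a ring homomorphism. All substantive content of the lemma is therefore absorbed into the (already recorded) observation that elements killing $\pi_a^*$ also die on the full preimage of any liftable $\sigma$.
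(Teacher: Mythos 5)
Your argument is correct, and it is essentially the intended one. The paper states this lemma without proof, evidently viewing it as an immediate consequence of the inclusion $\rR^2 \subseteq \rR^2(\Sigma)$ recorded just above it together with the fact that $\rI(G^c)$ is the ideal generated by $\rR^2$. Your proof spells out exactly that: decomposing $\alpha = \sum_i \beta_i \smile \gamma_i$ with $\beta_i \in \rR^2$, noting that restriction to $\sigma$ is a ring homomorphism, and killing each $\beta_i$ by the containment $\rR^2 \subseteq \rR^2(\Sigma)$. The only point worth flagging for completeness is the (routine) observation that, since $\rH^*_s(G^a) = \wedge^*\rH^1(G^a)$ is graded-commutative, the two-sided ideal generated by $\rR^2$ really is the set of such one-sided sums; you implicitly use this and it is harmless.
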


%{\bf Proof or reference.}

\begin{defn}
\label{defn:delta}
Let 
$$
1\ra Z\ra G^c\ra G^a\ra 1
$$
be a central extension. A $\Delta$-pair $(I,D)$ of $G^a$ is a set of subgroups 
$$
I\subseteq D\subseteq G^a
$$
such that 
\begin{itemize}
\item $I\in \bar{\Sigma}(G^c)$, 
\item $D$ is noncyclic,
\item for every $\delta\in D$, the subgroup
$\langle I , \delta\rangle \in \bar{\Sigma}(G^c)$.
\end{itemize}
\end{defn}

This definition depends on $G^c$. Assume we have a commutative diagram of central extensions

\

\centerline{
\xymatrix{ 
1\ar[r] & \tilde{Z}\ar[d] \ar[r] & \tilde{G}^c \ar@{>>}[d] \ar[r] & \tilde{G}^a \ar@{>>}[d]^{\gamma} \ar[r] & 1 \\
1\ar[r] & Z \ar[r]         & G^c     \ar[r]     & G^a                \ar[r] & 1.
}
}

\

\begin{defn}
\label{defn:surjects}
A $\Delta$-pair $(\tilde{I},\tilde{D})$ of $\tilde{G}^a$ 
{\em surjects} onto a $\Delta$-pair $(I,D)$ of $G^a$ if
%\begin{itemize}
%\item 
$\gamma(\tilde{I})=I$  and
%\item 
$\gamma(\tilde{D})= D$.
%\end{itemize}
\end{defn}

\begin{defn}
\label{defn:unram}
A class $\alpha\in \rH^i_s(G^a)$ is 
{\em unramified} with respect to a $\Delta$-pair
if its restriction to $D$ is induced from $D/I$, i.e., 
there exists a $\beta\in \rH^i_s(D/I)$ such that $\phi(\alpha)=\psi(\beta)$, for the natural homomorphisms in the diagram:

\

\centerline{
\xymatrix{
\rH^*_s(G^a) \ar[r]^{\phi}  & \rH^*_s(D)  & \ar[l]_{\psi}\rH^*_s(D/I) 
}
}   
\end{defn}

By Lemma~\ref{lemm:what-for} we have a similar notion for 
$$
\rH^*_s(G^a)/\rI(G^c)\subseteq \rH^*_s(G^c).
$$

\begin{lemm}
\label{lemm:unram-delta}
Consider a homomorphism
$$
\gamma: \tilde{G}^a\ra G^a
$$ 
and a class $\alpha\in \rH^i(G^a)$. Let $\tilde{\alpha}:=\gamma^*(\alpha)\in \rH^i(\tilde{G}^a)$ be the induced class. 
Let $(\tilde{I}, \tilde{D})$ be a $\Delta$-pair in $\tilde{G}^a$. 
Assume that one of the following holds:
\begin{itemize}
\item $\gamma(\tilde{I})=0$, 
\item $\gamma(\tilde{D})$ is cyclic, 
\item $\gamma$ induces a surjection of $\Delta$-pairs
$$
(\tilde{I},\tilde{D})\ra (I,D)
$$ 
and $\alpha\in \rH^i_s(G^a)$ is unramified with respect to $(I,D)$.
\end{itemize}
Then $\tilde{\alpha}\in \rH^i_s(\tilde{G}^a)$ is unramified 
with respect to $(\tilde{I},\tilde{D})$. 
\end{lemm}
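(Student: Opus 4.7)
The plan is to handle each of the three hypotheses separately, in each case producing an explicit class $\tilde\beta\in\rH^i_s(\tilde{D}/\tilde{I})$ whose pullback along $\tilde{D}\twoheadrightarrow\tilde{D}/\tilde{I}$ equals $\tilde\alpha|_{\tilde{D}}$. All three arguments reduce to chasing functoriality of restriction in the diagram defining ``unramified'', so my overall strategy is just to produce the relevant factorization in each case.

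Under the first hypothesis $\gamma(\tilde{I})=0$, I would observe that $\gamma|_{\tilde{D}}\colon\tilde{D}\to G^a$ annihilates $\tilde{I}$ and therefore factors as
$$
\tilde{D}\twoheadrightarrow\tilde{D}/\tilde{I}\xrightarrow{\bar\gamma}G^a.
$$
Then $\tilde\beta:=\bar\gamma^*(\alpha)\in\rH^i_s(\tilde{D}/\tilde{I})$ pulls back to $\tilde\alpha|_{\tilde{D}}$ by functoriality, so $\tilde\alpha$ is unramified with respect to $(\tilde{I},\tilde{D})$.

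Under the third hypothesis, the surjection conditions $\gamma(\tilde{I})=I$ and $\gamma(\tilde{D})=D$ ensure that $\gamma|_{\tilde{D}}$ descends to a homomorphism $\bar\gamma\colon\tilde{D}/\tilde{I}\to D/I$ that fits into a commutative square with the quotient maps $\pi_{\tilde{D}}$ and $\pi_D$. Writing the unramified hypothesis as $\alpha|_D=\pi_D^*(\beta)$ for some $\beta\in\rH^i_s(D/I)$, commutativity of the square gives
$$
\pi_{\tilde{D}}^*(\bar\gamma^*\beta)=(\gamma|_{\tilde{D}})^*\pi_D^*(\beta)=\tilde\alpha|_{\tilde{D}},
$$
so $\tilde\beta:=\bar\gamma^*(\beta)$ is the required class.

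Under the second hypothesis $\gamma(\tilde{D})$ is cyclic, the restriction $\tilde\alpha|_{\tilde{D}}$ is pulled back from $\alpha|_{\gamma(\tilde{D})}\in\rH^i_s(\gamma(\tilde{D}))$. Invoking \eqref{eqn:wehave} for the cyclic $\ell$-group $\gamma(\tilde{D})$, whose $\rH^1$ has rank one, one gets $\rH^i_s(\gamma(\tilde{D}))=0$ in degrees $i\ge 2$; hence $\tilde\alpha|_{\tilde{D}}=0$ and $\tilde\beta=0$ works trivially. The hard point, such as it is, lies precisely here: one must recognize that the relevant cohomology of a cyclic group vanishes in the stable sense in degrees $\ge 2$, so that the seemingly stronger conclusion ``$\tilde\alpha|_{\tilde{D}}=0$'' is in fact available; in the third case the only subtlety is verifying that $\bar\gamma$ is well-defined into $D/I$ (not merely into $G^a/I$), which uses exactly the surjectivity condition $\gamma(\tilde{D})=D$ built into Definition~\ref{defn:surjects}.
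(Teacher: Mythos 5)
Your proof is correct and follows essentially the same approach as the paper: the paper simply declares the first two cases ``evident'' and treats the third by exactly the commutative square $\rH^*_s(D/I)\to\rH^*_s(D)$, $\rH^*_s(\tilde{D}/\tilde{I})\to\rH^*_s(\tilde{D})$ that you write down. Your fill-in for case two (vanishing of $\rH^i_s$ of a cyclic group via $\rH^*_s=\wedge^*\rH^1$) is the right one, and it correctly reveals the implicit restriction to $i\ge 2$ under which the lemma is used.
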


\begin{proof}
The first two cases are evident. Consider the third condition. 
By assumption, $\gamma$ induces a homomorphism 
$\tilde{D}/\tilde{I}\ra D/I$. Passing to cohomology we get a 
commuting diagram

\

\centerline{
\xymatrix{
\rH^*_s(D/I)          \ar[d] \ar[r]  & \rH^*_s(D) \ar[d]    \\
\rH^*_s(\tilde{D}/\tilde{I}) \ar[r] & \rH^*_s(\tilde{D}),
}
}

\

\noindent
and thus the claim.
\end{proof}

\section{Central extensions and isoclinism}
\label{sect:cent-ext}

Let $G^a$ and $Z$ be finite abelian  $\ell$-groups.
Central extensions of $G^a$ by $Z$ are parametrized by 
$\rH^2(G^a,Z)$; 
for $\alpha \in \rH^2(G^a,Z)$ we let $G^c_{\alpha}$ be the 
corresponding central extension:
\begin{equation}
\label{eqn:iso-alpha}
1\ra Z\ra G^c_{\alpha}\stackrel{\pi_a}{\lra} G^a\ra 1
\end{equation}
Fix an embedding $Z\hookrightarrow (\Q/\Z)^r$, consider 
the exact sequence
$$
1\ra Z\ra (\Q/\Z)^r \ra (\Q/\Z)^r \ra 1,
$$
and the induced long exact sequence in cohomology
$$
\rH^1(G^a,(\Q/\Z)^r) \stackrel{\delta}{\lra} \rH^2(G^a,Z) \ra \rH^2(G^a,(\Q/\Z)^r).
$$
We say that $\alpha, \tilde{\alpha}\in \rH^2(G^a,Z)$ and the corresponding 
extensions are {\em isoclinic} if 
$$
\alpha - \tilde{\alpha}\in \delta(\rH^1(G^a,(\Q/\Z)^r)).
$$
This notion does not depend on the chosen embedding $Z\hookrightarrow (\Q/\Z)^r$
and coincides with the standard definition of 
isoclinic in the theory of $\ell$-groups. 

\begin{lemm}
\label{lemm:iso-delta-pair}
If $\alpha, \tilde{\alpha}\in \rH^2(G^a, Z)$ are isoclinic then the corresponding extensions of $G^a$ 
define the same set of $\Delta$-pairs in $G^a$. 
\end{lemm}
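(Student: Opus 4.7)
The plan is to reduce the statement to the claim that the two central extensions have the same liftable subgroups, $\bar\Sigma(G^c_\alpha) = \bar\Sigma(G^c_{\tilde\alpha})$. Inspecting Definition~\ref{defn:delta}, the three conditions defining a $\Delta$-pair $(I,D)$ involve the extension only through membership statements in $\bar\Sigma$ (namely $I\in\bar\Sigma$ and $\langle I,\delta\rangle\in\bar\Sigma$ for every $\delta\in D$); the noncyclicity of $D$ is intrinsic to $G^a$. Thus, once the sets of liftable subgroups are shown to agree, the two sets of $\Delta$-pairs coincide automatically.

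To detect liftability I would use the commutator pairing. For any class $\beta\in\rH^2(G^a,Z)$, the commutator on $G^c_\beta$ descends to a well-defined alternating pairing
$$
c_\beta \,:\, \wedge^2 G^a \lra Z,
$$
the values landing in $Z$ because $G^a$ is abelian and being independent of the set-theoretic lift because $Z$ is central. This pairing is additive in $\beta$ and coincides with the image of $\beta$ under the natural projection $\rH^2(G^a,Z)\to\Hom(\wedge^2 G^a,Z)$ coming from the universal coefficient sequence. A subgroup $\sigma\subseteq G^a$ lies in $\bar\Sigma(G^c_\beta)$ exactly when its full preimage in $G^c_\beta$ is abelian, which happens exactly when $c_\beta$ vanishes on $\wedge^2\sigma$.

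It therefore suffices to show $c_\alpha = c_{\tilde\alpha}$. By the isoclinism hypothesis, $\alpha-\tilde\alpha$ lies in the image of $\delta$, so by exactness of
$$
\rH^1(G^a,(\Q/\Z)^r) \stackrel{\delta}{\lra} \rH^2(G^a,Z) \lra \rH^2(G^a,(\Q/\Z)^r),
$$
its image in $\rH^2(G^a,(\Q/\Z)^r)$ vanishes. Pushing the central extension $G^c_{\alpha-\tilde\alpha}$ forward along $Z\hookrightarrow(\Q/\Z)^r$ therefore produces the split, and hence abelian, extension $G^a\times(\Q/\Z)^r$. The canonical map from $G^c_{\alpha-\tilde\alpha}$ into this pushout is injective, so $G^c_{\alpha-\tilde\alpha}$ embeds into an abelian group and is itself abelian, giving $c_{\alpha-\tilde\alpha}=0$; additivity then yields $c_\alpha=c_{\tilde\alpha}$.

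The main step requiring care is the identification of $c_\beta$ as an additive invariant of the cohomology class (independent of the choice of $2$-cocycle representative) that precisely detects abelianness of preimages; this is standard central-extension theory, but has to be verified carefully in cocycle coordinates. Once granted, the argument is a purely formal consequence of the long exact sequence: $\delta$-classes, being killed in $\rH^2(G^a,(\Q/\Z)^r)$, correspond to abelian central extensions, and hence cannot affect the commutator pairing or the set $\bar\Sigma$.
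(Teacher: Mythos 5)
Your proof is correct and takes essentially the same route as the paper: both arguments reduce the $\Delta$-pair condition to the commutator pairing $c_\beta$ on $\wedge^2(G^a)$ and then observe that this pairing is an isoclinism invariant. The paper packages the invariance dually, through $\Ker\bigl(\pi_a^*\colon \rH^2(G^a,\Q/\Z)\to\rH^2(G^c,\Q/\Z)\bigr)$ and its dual subgroup $R\subset\wedge^2(G^a)$, merely asserting that this kernel depends only on the isoclinism class, whereas your additivity-of-$c_\beta$ plus push-forward-along-$Z\hookrightarrow(\Q/\Z)^r$ argument supplies a clean, self-contained justification of exactly that step.
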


\begin{proof}
A pair of subgroups $(I,D)$ is a $\Delta$-pair in $G^a$, with respect to a central extension $G^c$, 
if their preimages commute in $G^c$, i.e.,  
$$
[\pi_a^{-1}(I), \pi_a^{-1}(D)]=0 \quad \text{  in } \quad Z.
$$
Consider the homomorphism 
$$
\pi_a^*: \rH^2(G^a, \Q/\Z)\to \rH^2(G^c, \Q/\Z),
$$
and note that $\Ker(\pi_a^*)$ only depends on the isoclinism class of the extension. 
Furthermore, $\rH^2(G^a, \Q/\Z)$ is dual to $\wedge^2(G^a)$. 
Let $R\subset \wedge^2(G^a)$ be the subgroup 
which is dual to $\Ker(\pi_a^*)$. 
It remains to observe that $(I,D)$ is a $\Delta$-pair for $G^c$ 
if and only if $\pi_a^{-1}(I)\wedge \pi_a^{-1}(D)$ 
intersects $R$ trivially; thus the notion of a $\Delta$-pair is an invariant of the  
isoclinism class of the extension. 
\end{proof}

\begin{lemm}
\label{lemm:bock}
If $\alpha, \tilde{\alpha}\in \rH^2(G^a,Z)$ are isoclinic then
$$
\rR^2(G^c_{\alpha}) = \rR^2(G^c_{\tilde{\alpha}}),
$$
in particular
$$
\rH^*_s(G^a,\Z/\ell^n) / \rI(G^c_{\alpha}) = \rH^*_{s}(G^a,\Z/\ell^n) /\rI(G^c_{\tilde{\alpha}}).
$$
\end{lemm}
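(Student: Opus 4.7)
The plan is to characterize $\rR^2(G^c_\alpha)$ in terms of the commutator pairing $c_\alpha\colon \wedge^2(G^a)\to Z$ attached to the extension, and then invoke the observation (already exploited in Lemma~\ref{lemm:iso-delta-pair}) that isoclinism of $\alpha$ and $\tilde\alpha$ amounts exactly to the equality $c_\alpha=c_{\tilde\alpha}$.

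I would first apply the Hochschild--Serre five-term exact sequence for the central extension $1\to Z\to G^c_\alpha\to G^a\to 1$ with coefficients $M=\Z/\ell^n$,
\[
0\to \rH^1(G^a,M)\to \rH^1(G^c_\alpha,M)\to \Hom(Z,M)\xrightarrow{\,d_2\,}\rH^2(G^a,M)\xrightarrow{\,\pi_a^*\,}\rH^2(G^c_\alpha,M),
\]
to identify $\Ker(\pi_a^*)$ at the level of ordinary cohomology with $\{\chi_*\alpha\;:\;\chi\in\Hom(Z,M)\}$. Under the projection $q_a\colon\rH^2(G^a,M)\twoheadrightarrow \rH^2_s(G^a,M)=\wedge^2(\rH^1(G^a,M))$, each transgressed class $\chi_*\alpha$ corresponds, via the canonical inclusion $\wedge^2(\rH^1(G^a,M))\hookrightarrow\Hom(\wedge^2(G^a),M)$, to the alternating form $\chi\circ c_\alpha$. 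Hence the subgroup
\[
S_\alpha:=\bigl\{\chi\circ c_\alpha\;:\;\chi\in\Hom(Z,M)\bigr\}\subseteq\rH^2_s(G^a,M)
\]
depends only on $c_\alpha$, and therefore only on the isoclinism class of $\alpha$.

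The crux of the argument is to identify $\rR^2(G^c_\alpha)$ with $S_\alpha$. The inclusion $S_\alpha\subseteq\rR^2(G^c_\alpha)$ is tautological: a lift to $\rH^2(G^a,M)$ of an element of $S_\alpha$ already dies in $\rH^2(G^c_\alpha,M)$, hence a fortiori in $\rH^2_s(G^c_\alpha,M)$. The reverse inclusion is the technical heart, requiring the induced map $\mathrm{Im}(\pi_a^*)\to\rH^2_s(G^c_\alpha,M)$ to be injective. I would establish this by restricting to the full preimages $\pi_a^{-1}(\sigma)\subseteq G^c_\alpha$ of liftable subgroups $\sigma\in\bar\Sigma(G^c_\alpha)$, applying the five-term sequence on each such preimage, and detecting nontrivial classes via the description \eqref{eqn:wehave} of stable cohomology of abelian groups together with Lemma~\ref{lemm:what-for}. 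This is the main obstacle, as it requires a careful handling of stable cohomology of $2$-step nilpotent groups.

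Once $\rR^2(G^c_\alpha)=S_\alpha$ is established, the isoclinism invariance $\rR^2(G^c_\alpha)=\rR^2(G^c_{\tilde\alpha})$ follows immediately from $S_\alpha=S_{\tilde\alpha}$. The ``in particular'' clause is then automatic, since by definition $\rI(G^c)$ is the ideal in $\rH^*_s(G^a,\Z/\ell^n)$ generated by $\rR^2(G^c)$.
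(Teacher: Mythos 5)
The paper offers no internal proof of this lemma; it is a citation to Section~2 and Theorem~3.2 of the Bogomolov--B\"ohning isoclinism paper, so there is no argument of the authors' to compare against. Your preliminary setup is correct and sensible: the five-term sequence identifies $\Ker(\pi_a^*)$ at the level of ordinary cohomology with the transgression image, its projection $S_\alpha$ to stable cohomology is governed by the commutator form $c_\alpha$ and hence depends only on the isoclinism class, and $S_\alpha \subseteq \rR^2(G^c_\alpha)$ follows from functoriality of the quotient map to stable cohomology.

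The gap is the converse inclusion $\rR^2(G^c_\alpha)\subseteq S_\alpha$, which you flag as the crux but reduce to the wrong assertion. Injectivity of $\mathrm{Im}(\pi_a^*)\to \rH^2_s(G^c_\alpha,M)$ is false in general. Already for $G^a=\Z/\ell$ and the split extension $G^c_\alpha=\Z/\ell\times Z$ with $Z=\Z/\ell$, the map $\pi_a^*$ is injective on $\rH^2$ (the projection splits), so $\mathrm{Im}(\pi_a^*)$ contains the nonzero Bockstein class in $\rH^2(\Z/\ell,\Z/\ell)$; but that class vanishes in $\rH^2_s(G^c_\alpha)=\wedge^2\bigl(\rH^1(G^c_\alpha)\bigr)$, since Bockstein classes lie in $\Ker(q_c)$. (Here $\rR^2=S_\alpha=0$, so the lemma is trivially true; the point is that injectivity is not the mechanism.) What the converse inclusion actually requires is the relative statement $\mathrm{Im}(\pi_a^*)\cap\Ker(q_c)\subseteq\pi_a^*(\Ker(q_a))$, where $q_a$, $q_c$ are the projections to stable cohomology. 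Your detection strategy, restriction to the abelian preimages $\pi_a^{-1}(\sigma)$ for $\sigma\in\bar\Sigma(G^c_\alpha)$, can at best show $\rR^2\subseteq\rR^2(\Sigma)$, which the paper already records as a one-sided containment without claiming equality; you would still owe a proof that $\rR^2(\Sigma)\subseteq S_\alpha$, i.e.\ that the decomposable isotropic bivectors of $c_\alpha$ span $\Ker\bigl(c_\alpha\colon\wedge^2 G^a\to Z\bigr)$, and that this persists over $\Z/\ell^n$ coefficients for $G^a$ not elementary abelian. None of this is supplied, so the central implication of the lemma is left unproved.
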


\begin{proof}
See Section 2 and Theorem 3.2 in \cite{b-b-iso}.
\end{proof}

\begin{lemm}
\label{lemm:det-bock}
If $\rR^2 = \rR^2(\Sigma)$ 
then $\Sigma$ determines $G^c$ up to isoclinism.
\end{lemm}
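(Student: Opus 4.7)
The plan is first to extract $\rR^2$ from $\Sigma$ using the hypothesis, and then to recover the isoclinism class of $G^c$ from $\rR^2$ via a five-term exact sequence and Pontryagin duality.

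By Lemma~\ref{lemm:iso-delta-pair}, $\Sigma$ depends only on the isoclinism class of $G^c$, and by Lemma~\ref{lemm:bock} so does $\rR^2$. Under the hypothesis $\rR^2 = \rR^2(\Sigma)$, the subspace $\rR^2 \subseteq \rH^2_s(G^a, \Z/\ell^n)$ is recovered from $\Sigma$ as the set of classes whose restriction to every $\sigma \in \Sigma$ vanishes. It therefore suffices to prove that $\rR^2$ determines the isoclinism class of $G^c$.

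To this end I would fix $n$ large enough that $\ell^n$ annihilates both $G^a$ and $Z$, identify
\[
\rH^2_s(G^a, \Z/\ell^n) = \wedge^2 \rH^1(G^a, \Z/\ell^n) = \Hom(\wedge^2 G^a, \Z/\ell^n),
\]
and introduce the commutator pairing $c \colon \wedge^2 G^a \to Z$ coming from the central extension $1 \to Z \to G^c \to G^a \to 1$. Passing the five-term exact sequence of this extension to its antisymmetric (stable) quotient in degree two should yield
\[
\rR^2 = \mathrm{Im}\bigl(c^* \colon \Hom(Z, \Z/\ell^n) \to \Hom(\wedge^2 G^a, \Z/\ell^n)\bigr).
\]
The idea is that the transgression $\Hom(Z, \Z/\ell^n) \to \rH^2(G^a, \Z/\ell^n)$, once projected to stable cohomology where the symmetric (Bockstein) contributions die, is exactly pullback along $c$. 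Then by Pontryagin duality for finite abelian $\ell$-groups, $\rR^2$ determines the surjection $\wedge^2 G^a \twoheadrightarrow \mathrm{Im}(c)$, hence the class of the extension modulo the Bockstein image introduced before Lemma~\ref{lemm:iso-delta-pair} --- which is by definition its isoclinism class.

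The main obstacle is the identification of $\rR^2$ with $\mathrm{Im}(c^*)$: this is the converse direction to Lemma~\ref{lemm:bock} and is essentially the content of the cited \cite{b-b-iso}. Once it is in hand, the remaining Pontryagin duality step recovering $c$ from its $\Z/\ell^n$-dual image is routine.
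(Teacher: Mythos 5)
The paper itself states Lemma~\ref{lemm:det-bock} without giving a proof, so there is no written argument to compare against; I am therefore assessing your proposal on its own terms.

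Your overall strategy is the natural and correct one: recover $\rR^2 = \rR^2(\Sigma)$ from $\Sigma$ via the hypothesis, then argue that $\rR^2$ alone pins down the commutator pairing $c\colon \wedge^2 G^a \to Z$, which is what isoclinism (as defined just before Lemma~\ref{lemm:iso-delta-pair}, i.e.\ modulo $\delta(\rH^1(G^a,(\Q/\Z)^r))$) remembers. The duality step at the end is also right: once $\rR^2 = \mathrm{Im}(c^*) = \mathrm{Ann}(\Ker c)$ is established, Pontryagin duality recovers $\Ker c$ and hence the surjection $\wedge^2 G^a \twoheadrightarrow \mathrm{Im}(c)$, which is the isoclinism invariant. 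In fact, under this identification the hypothesis $\rR^2 = \rR^2(\Sigma)$ translates exactly into $\Ker(c) = \langle \wedge^2\sigma : \sigma\in\Sigma\rangle$, which makes the recovery from $\Sigma$ transparent and may be cleaner than invoking the five-term sequence explicitly.

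Two points deserve sharpening. First, the deferred identification $\rR^2 = \mathrm{Im}(c^*)$ is not a triviality: the transgression lands in the full $\rH^2(G^a)$, and you need that passing to the stable quotient still produces a kernel whose only content is the antisymmetrization of the extension class. You flag this and defer to \cite{b-b-iso}, which Lemma~\ref{lemm:bock} cites for the one direction actually stated; whether the converse is also there should be checked, since it is the entire content of the reduction. Second, the identification $\wedge^2\rH^1(G^a,\Z/\ell^n) = \Hom(\wedge^2 G^a,\Z/\ell^n)$ is \emph{not} canonical (nor an isomorphism under the commutator-evaluation pairing) once $n$ strictly exceeds the exponent of $G^a$: the determinant pairing acquires an extra factor of $\ell$ that kills it. The paper's own proof of Lemma~\ref{lemm:iso-delta-pair} sidesteps this by working with $\Q/\Z$ coefficients, where $\rH^2(G^a,\Q/\Z)\cong(\wedge^2 G^a)^\vee$ holds cleanly with no Bockstein terms; you should either do the same or fix $n$ equal to the exponent. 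Neither issue undermines the strategy, but both need to be addressed before this qualifies as a complete proof.
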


\begin{exam}
Consider $G^a={\mathbb F}_\ell^{2m}$, 
as a vector-space over $\mathbb F_{\ell^2}$ of dimension $m$. 
Let $\Sigma$ be the set of $\F_{\ell}$-linear subspaces of the form 
$\F_{\ell^2}$. Then 
$$
\rH^i_{s}(G^c,\Z/\ell^n) = 0, \quad \text{ for  } \quad i >2.
$$ 
\end{exam}

\begin{lemm}
\label{lemm:explicit}
If $\alpha, \tilde{\alpha}\in \rH^2(G^a,Z)$ are isoclinic
then there exist faithful representations $V,\tilde{V}$ of 
$G^c_{\alpha}$ and $G^c_{\tilde{\alpha}}$ over $k$
such that $V/G^c_{\alpha}$ and $\tilde{V}/G^c_{\tilde{\alpha}}$ are birational.
\end{lemm}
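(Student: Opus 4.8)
The plan is to realize both central extensions inside a common isoclinism framework and then use the freedom of choosing faithful representations. First I would recall the structural description of isoclinic extensions: if $\alpha,\tilde\alpha\in \rH^2(G^a,Z)$ are isoclinic, they differ by $\delta(\beta)$ for some $\beta\in \rH^1(G^a,(\Q/\Z)^r)$, i.e.\ by a homomorphism $G^a\to (\Q/\Z)^r$. Concretely this means there is a third group $\bG$, a central extension of $G^a\times A$ (for a suitable finite abelian $A$) by $Z$, which surjects onto both $G^c_\alpha$ and $G^c_{\tilde\alpha}$ with central kernels; the commutator pairings $\wedge^2(G^a)\to Z$ of $G^c_\alpha$ and $G^c_{\tilde\alpha}$ agree, and that common pairing is what determines the ``interesting'' part of the group. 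The point is that $G^c_\alpha$ and $G^c_{\tilde\alpha}$ have isomorphic commutator subgroups and their quotients by the center are isomorphic, so they become isomorphic after multiplying by a central torus-like factor and passing to a quotient.

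Next I would translate this into birational geometry. For a faithful representation $W$ of a finite group $H$ (with $\operatorname{char}(k)$ coprime to $|H|$), $W/H$ is, up to birational equivalence, governed by the ``no-name lemma'': if $H$ acts generically freely on $W$ and on $W'$, then $W/H$ and $W'/H$ are stably birational, and in fact $W\oplus W'/H \sim_{bir} (W/H)\times \A^{\dim W'}$, etc. More to the point, I would use the standard fact that for two faithful representations of the same group $H$, the quotients $W/H$ and $W'/H$ are stably birational, together with the fact that a central extension by a cyclic group acting on a one-dimensional space introduces only a projective-space bundle, which is rationally trivial. So the strategy is: choose a faithful representation $U$ of the common over-group $\bG$; restrict/induce to get faithful representations of $G^c_\alpha$ and $G^c_{\tilde\alpha}$; and show both quotients are birational to the quotient $U/\bG$ (or to a common model built from it), by peeling off the central difference as a product with affine space or a projective bundle.

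Concretely, the key steps in order: (1) write $\tilde\alpha = \alpha + \delta(\beta)$ and describe $\bG$ as the pullback/fiber product realizing both extensions; (2) pick a faithful representation $V_0$ of $G^a\times A$ (or of $\bG$ directly) and note that, since $Z$ acts by scalars on line-components, enlarging a faithful representation of $G^c_\alpha$ by the extra central directions coming from $\beta$ changes the quotient only by a vector-bundle (hence birationally trivial) factor; (3) invoke the no-name lemma to pass freely between faithful representations of the same group up to birational equivalence; (4) conclude that $V/G^c_\alpha$ and $\tilde V/G^c_{\tilde\alpha}$ are each birational to one and the same quotient, hence to each other. The main obstacle I expect is step (2)--(3): making precise that the isoclinism really does let us match up faithful representations so that the residual central discrepancy contributes only a stably-trivial (projective-space or affine-space bundle) factor — i.e.\ controlling how the extra abelian quotient $A$ and the scalar action of $Z$ interact with faithfulness, so that no genuine birational invariant is introduced. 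This is where one must be careful that the ``difference'' between the two extensions is visible only through an abelian (hence, by \eqref{eqn:wehave} and the rationality of linear quotients of abelian groups, birationally inert) piece; Lemma~\ref{lemm:bock} and the cited results in \cite{b-b-iso} should supply exactly the cohomological input that guarantees this.
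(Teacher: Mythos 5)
Your overall intuition --- that the difference between isoclinic extensions is ``central/abelian and hence birationally inert'' --- is the right one, but the mechanism you propose does not close, and the step you yourself flag as the main obstacle is a genuine gap rather than a technicality. The common over-group $\bG$ (e.g.\ the fiber product $G^c_{\alpha}\times_{G^a}G^c_{\tilde{\alpha}}$) does exist, but the no-name lemma only compares two \emph{faithful} representations of the \emph{same} group; it does not let you relate $V/G^c_{\alpha}$ to $U/\bG$. A faithful representation $V$ of $G^c_{\alpha}$, pulled back to $\bG$, is not generically free for $\bG$ (the residual central kernel acts trivially), so $(V\oplus U)/\bG$ is a vector bundle over $U/\bG$ but \emph{not} over $V/\bG=V/G^c_{\alpha}$, and indeed $V/G^c_{\alpha}$ and $U/\bG$ are in general not even stably birational (their function fields remember different groups). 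Symmetrizing does not help either: a representation pulled back from $G^c_{\alpha}$ restricts faithfully to $\ker(\bG\to G^c_{\tilde{\alpha}})$, so it is never simultaneously ``a faithful $G^c_{\tilde{\alpha}}$-representation plus central directions.'' Also beware the phrase ``projective-space bundle, which is rationally trivial'': Severi--Brauer bundles need not be rationally trivial; what saves the day in this setting is that the relevant fibrations are $\mathbb{G}_m$-torsors (hence generically trivial), not projective bundles.

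The paper's proof avoids the over-group entirely and supplies exactly the missing idea. Take $\chi_1,\dots,\chi_r$ a basis of $\Hom(Z,k^\times)$ and set $V_j=\Ind_Z^{G^c_{\alpha}}(\chi_j)$, $\tilde{V}_j=\Ind_Z^{G^c_{\tilde{\alpha}}}(\chi_j)$. The induced action on $\P(V_j)$ is a projective representation of $G^a$ classified by the image of the extension class in $\rH^2(G^a,k^\times)$; since $k^\times$ is divisible, the discrepancy $\delta(\beta)$ between isoclinic classes dies there, so $\P(V_j)$ and $\P(\tilde{V}_j)$ are canonically isomorphic as $G^a$-varieties. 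Peeling off the scalar directions as $\mathbb{G}_m$-torsors, both $V/G^c_{\alpha}$ and $\tilde{V}/G^c_{\tilde{\alpha}}$ are birational to $\bigl(\prod_j\P(V_j)\bigr)/G^a\times\prod_j k^\times/\chi_j(Z)$, which proves the lemma. If you want to salvage your route, you must at minimum specify the representations this concretely and prove the projective-equivalence statement; the cohomological input is the vanishing of $\delta(\rH^1(G^a,(\Q/\Z)^r))$ after push-forward along each $\chi_j$, not Lemma~\ref{lemm:bock}.
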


\begin{proof}
Explicit construction:
Let $\chi_1,\ldots, \chi_r$ be a basis of $\Hom(Z,k^\times)$ and put
$$
V:=\oplus_{j=1}^r V_j\ \quad \text{ and } \quad \tilde{V}=\oplus_{j=1}^{r} \tilde{V}_j,
$$
where
$$
V_j=\Ind_Z^{G^c_{\alpha}} (\chi_j) \quad \text{ and } \quad 
\tilde{V}_j=\Ind_Z^{G^c_{\tilde{\alpha}}} (\chi_j).
$$
Note that the {\em projectivizations}  $\P(V_j)$ and $\P(\tilde{V}_j)$ are canonically isomorphic
as $G^a$-representations.  
The group $(k^\times)^r$ acts on $V$ and $\tilde{V}$, 
and {\em both} $V/G^c_{\alpha}$ and $V/G^c_{\tilde{\alpha}}$ 
are birational to 
$$
\left(\prod_{j=1}^r  \P(V_j)\right) /G^a \times  
\left(\prod_{j=1}^r  k^\times / \chi_j(Z)\right). 
$$
\end{proof}

\begin{lemm}
\label{lemm:modify-g}
Consider a central extension of finite $\ell$-groups
\begin{equation}
\label{eqn:centralll}
1\ra Z\ra G^c\stackrel{\pi_a}{\longrightarrow} G^a\ra 1
\end{equation}
and fix a primitive $g\in G^a$. 
Then there exists an isoclinic extension
$$
1\ra \tilde{Z}\ra \tilde{G}^c\stackrel{\tilde{\pi}_a}{\longrightarrow} G^a\ra 1
$$ 
and a lift $\tilde{g}^c\in \tilde{G}^c$ of $g$ 
such that the order of 
$\tilde{g}^c$ in $\tilde{G}^c$ equals the order of $g\in G^a$. 
\end{lemm}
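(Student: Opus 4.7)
My plan is to keep $\tilde Z=Z$ and modify only the class $\alpha\in \rH^2(G^a,Z)$ that classifies $G^c$, using the cohomological description of isoclinism from Section~\ref{sect:cent-ext}: two extensions are isoclinic precisely when their classes differ by an element of $\delta(\rH^1(G^a,(\Q/\Z)^r))$. I will add such a coboundary to $\alpha$ so as to kill its restriction to $\langle g\rangle$; the resulting extension will then be split over $\langle g\rangle$ and will carry a lift of $g$ of the required order.

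To set this up, fix an embedding $Z\hookrightarrow(\Q/\Z)^r$ and let $\ell^m$ denote the order of $g$, so $\langle g\rangle\cong \Z/\ell^m$. Because $(\Q/\Z)^r$ is divisible, $\rH^2(\langle g\rangle,(\Q/\Z)^r)=0$, and the long exact cohomology sequence attached to $0\to Z\to (\Q/\Z)^r\to (\Q/\Z)^r\to 0$ shows that the connecting map
$$
\delta_g\colon \rH^1(\langle g\rangle,(\Q/\Z)^r)\twoheadrightarrow \rH^2(\langle g\rangle,Z)
$$
is surjective. Pick $\phi_0\in \Hom(\langle g\rangle,(\Q/\Z)^r)=\rH^1(\langle g\rangle,(\Q/\Z)^r)$ with $\delta_g(\phi_0)=-\alpha|_{\langle g\rangle}$.

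Primitivity of $g$ means $g\notin \ell G^a$, i.e.\ $\langle g\rangle$ is a direct summand of the finite abelian $\ell$-group $G^a$; write $G^a=\langle g\rangle\oplus H$. Extend $\phi_0$ to $\phi\in \Hom(G^a,(\Q/\Z)^r)$ by declaring $\phi|_H=0$, and set $\tilde\alpha:=\alpha+\delta(\phi)\in \rH^2(G^a,Z)$, $\tilde G^c:=G^c_{\tilde\alpha}$. By construction $\tilde G^c$ is isoclinic to $G^c$, and by naturality of $\delta$ under restriction
$$
\tilde\alpha|_{\langle g\rangle}=\alpha|_{\langle g\rangle}+\delta_g(\phi_0)=0.
$$
Hence the pullback $\tilde\pi_a^{-1}(\langle g\rangle)\to \langle g\rangle$ is the trivial extension; any section produces a lift $\tilde g^c\in \tilde G^c$ of $g$ with $(\tilde g^c)^{\ell^m}=1$, i.e.\ of order exactly $\ell^m$.

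The one nonroutine point is recognizing the right reduction: the whole problem concerns only the restriction of $\alpha$ to the cyclic subgroup $\langle g\rangle$, and the two ingredients at hand fit together perfectly. Divisibility of $(\Q/\Z)^r$ makes $\delta_g$ surjective so the local obstruction can be killed, while primitivity of $g$ is exactly what lets the killing cocycle be extended from $\langle g\rangle$ back to $G^a$; everything else is a routine naturality check.
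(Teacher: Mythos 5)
Your proof is correct and follows essentially the same route as the paper: fix the embedding $Z\hookrightarrow(\Q/\Z)^r$, use $\rH^2(\langle g\rangle,(\Q/\Z)^r)=0$ to find a class $\phi_0\in\rH^1(\langle g\rangle,(\Q/\Z)^r)$ killing $\alpha|_{\langle g\rangle}$ under $\delta$, extend it to $G^a$, and shift $\alpha$ by $\delta(\phi)$ to obtain an isoclinic extension split over $\langle g\rangle$. You supply one detail the paper leaves implicit, namely why $\phi_0$ extends from $\langle g\rangle$ to $G^a$ — you use primitivity to split off $\langle g\rangle$ as a direct summand; alternatively one could just invoke injectivity of the divisible group $(\Q/\Z)^r$, which makes this step immediate without invoking the direct-sum decomposition.
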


\begin{proof}
Since $\rH^2(\Z/\ell^n, (\Q/\Z)^r)=0$ we can modify 
the sequence~\eqref{eqn:centralll} by an element in 
$\delta(\rH^1(G^a, (\Q/\Z)^r))$
to split the induced extension 
$$
1\ra Z\ra  Z_{g} \ra \langle g\rangle \ra 1, 
\quad \quad Z_{g}:= (\pi_a)^{-1}(\langle g\rangle ),
$$ 
this provides a lift of $g$ of the same order. 
\end{proof}

\begin{coro}
\label{coro:bock-trivial}
If for some $g\in Z$ the restriction of  $\pi_a $ to $\langle g\rangle $ defines an
embedding $\pi_a:\langle g\rangle \hookrightarrow G^a$ then
$G^c$ is isoclinic to $G^c/\langle g\rangle \times \langle g\rangle$.
\end{coro}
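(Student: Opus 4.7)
The plan is to read the hypothesis as asserting that $g$ is central in $G^c$ (literally ``$g\in Z$'' with $\pi_a|_{\langle g\rangle}\hookrightarrow G^a$ an embedding would force $g=0$; the natural repair is $g\in Z(G^c)$ with $\langle g\rangle\cap Z=1$, so that $g$ has the same order as its image $\bar g\in G^a$). Under this reading $\langle g\rangle$ is a normal central cyclic subgroup of $G^c$ meeting $Z$ trivially, and both $G^c/\langle g\rangle$ and the product $G^c/\langle g\rangle \times \langle g\rangle$ are well-defined groups. The abelianization of the product is isomorphic to $G^a$ via the splitting $G^a\simeq (G^a/\langle\bar g\rangle)\oplus\langle\bar g\rangle$ furnished by the lift $g$, so that both sides are genuinely central extensions of $G^a$ by $Z$.

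My approach is to verify isoclinism directly from the classical definition, by exhibiting compatible isomorphisms of central quotients and commutator subgroups. Since $\langle g\rangle\subseteq Z(G^c)$, one has $Z(G^c/\langle g\rangle)=Z(G^c)/\langle g\rangle$, hence the central quotient of $G^c/\langle g\rangle\times \langle g\rangle$ equals $(G^c/\langle g\rangle)/Z(G^c/\langle g\rangle)\times 1 \cong G^c/Z(G^c)$. Since $[G^c,G^c]\subseteq Z$ and $\langle g\rangle\cap Z=1$, the commutator subgroup of $G^c/\langle g\rangle$ maps isomorphically onto $[G^c,G^c]$, and the commutator pairing is preserved: commutators computed in $G^c$, in $G^c/\langle g\rangle$, or in the product all coincide under these identifications, because $\langle g\rangle$ is central and disjoint from $Z\supseteq [G^c,G^c]$.

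The main obstacle will be reconciling this classical isoclinism with the paper's cohomological definition via $\delta:\rH^1(G^a,(\Q/\Z)^r)\to\rH^2(G^a,Z)$: one must verify that two extension classes $\alpha,\tilde\alpha\in \rH^2(G^a,Z)$ inducing the same commutator pairing $c:\wedge^2 G^a\to Z$ differ by an element of $\delta(\rH^1(G^a,(\Q/\Z)^r))$. This follows from the long exact sequence associated with $0\to Z\to (\Q/\Z)^r\to(\Q/\Z)^r\to 0$ combined with the standard description, for a finite abelian $G^a$, of $\rH^2(G^a,(\Q/\Z)^r)$ modulo symmetric (i.e., $\delta$-exact) classes as $\Hom(\wedge^2 G^a,(\Q/\Z)^r)$. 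Granting this compatibility, which also underlies Lemmas~\ref{lemm:iso-delta-pair} and~\ref{lemm:bock}, the corollary is immediate from the commutator-map computation above.
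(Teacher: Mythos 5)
Your proof is correct in substance and correctly identifies the misstatement in the hypothesis, but it takes a genuinely different route from the paper's. You verify Hall's classical definition of isoclinism directly: since $\langle g\rangle$ is central and meets $Z$ (hence $[G^c,G^c]\subseteq Z$) trivially, the central quotient, the commutator subgroup, and the commutator pairing of $G^c/\langle g\rangle\times\langle g\rangle$ all agree with those of $G^c$ under the evident maps. The paper instead applies its cohomological criterion to the central extension $1\to\langle g\rangle\to G^c\to G^c/\langle g\rangle\to 1$: this extension is pulled back from $1\to\pi_a(\langle g\rangle)\to G^a\to G^a/\pi_a(\langle g\rangle)\to 1$ along the induced map $G^c/\langle g\rangle\to G^a/\pi_a(\langle g\rangle)$, and since the latter has abelian total group its image in $\rH^2(\,\cdot\,,\Q/\Z)$ vanishes; by naturality of the pullback the same holds for $\alpha_g$, which is precisely cohomological isoclinism to the split extension. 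Both arguments presuppose the equivalence of the cohomological and classical notions of isoclinism (the paper asserts it, you sketch it); the paper's route stays inside the $\rH^2$-framework used throughout Section~\ref{sect:cent-ext}, while yours is more elementary and makes the reason for the result transparent.

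One inaccuracy is worth flagging. Your opening claim that the abelianization of $G^c/\langle g\rangle\times\langle g\rangle$ is $G^a$ via a splitting $G^a\simeq(G^a/\langle\bar g\rangle)\oplus\langle\bar g\rangle$ ``furnished by the lift $g$'' is not justified: the existence of a central lift of $\bar g$ of the same order does not make $\langle\bar g\rangle$ a direct summand of $G^a$, and the abelianization of the product is $(G^{c})^{\mathrm{ab}}/\langle\bar g\rangle\times\langle g\rangle$, which need not equal $G^a$ when $[G^c,G^c]\subsetneq Z$. Fortunately this plays no role in your actual argument, since Hall isoclinism does not require the two groups to be presented as extensions of the same base by the same kernel; the sentence can simply be dropped.
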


\begin{proof}
Consider the central extension
\begin{equation}
\label{eqn:zero}
1\ra \langle g\rangle\to G^c\to  G^c/\langle g\rangle\ra 1.
\end{equation}
It is defined by an element
$\alpha_{g}\subset \rH^2(G^c/\langle g\rangle ,\langle g\rangle)$. 
Consider the embedding $\langle g\rangle \subset \Q/\Z$. Then the 
image of $\alpha_{g}$ in 
$\rH^2(G^c/\langle g\rangle,\Q/\Z)$ is 0. 
Indeed, the extension is induced
from a central extensions of abelian quotient groups 
$$
1\ra \pi_a(\langle g\rangle)\to G^a\to G^a/\pi_a(\langle g\rangle)\ra 1,
$$
which is isoclinic to the trivial central extension;
the same holds for the exact sequence \eqref{eqn:zero}.
\end{proof}

\begin{lemm} 
\label{lemm:induced}
Consider a central extension of finite $\ell$-groups
$$
1\ra Z\ra G^c\ra G^a\ra 1,
$$ 
let $\alpha \in \rH^2(G^a,Z)$ be the corresponding class, 
and assume that there exists a $g\in G^a$  lifting
to a $g^c\in G^c$ of the same order.
Let $Z_{g}\subset G^c$ be the centralizer of $g^c$. 
Then the central extension 
$$
0\ra  \langle g^c\rangle \ra  Z_g\ra Z_{g}/\langle g^c\rangle \ra 0 
$$ 
is induced from the extension 
$$
0\ra \langle g \rangle \ra G^a \ra G^a/\langle g\rangle\ra 0. 
$$

If $V$ is a faithful representation of $G^c$ 
as in the proof of Lemma~\ref{lemm:explicit}
then $V/Z_g$ is a vector bundle over 
$\tilde{V}/(Z_g/\langle g^c\rangle)$, for some faithful representation $\tilde{V}$ of  
$Z_g/\langle g^c\rangle$. Furthermore, if $K:=k(V/Z_g)$ and 
$$
s^2_K:\rH^2(G^a,\Z/\ell^n)\ra \rH^2(G_K,\Z/\ell^n)
$$ 
is the corresponding homomorphism, then 
$s^2_K(\alpha)$ is induced from $G^a/\langle g\rangle$.
\end{lemm}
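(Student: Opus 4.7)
The plan is to handle the three assertions in sequence, using the same-order lift $g^c$ of $g$ as the key structural input throughout.

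For the induced extension assertion, I would construct the isomorphism of extensions directly. Since $g^c$ and $g$ have the same order, the restriction $\pi_a|_{\langle g^c\rangle}$ is an isomorphism $\langle g^c\rangle \stackrel{\sim}{\to} \langle g\rangle$, and hence $\langle g^c\rangle \cap Z = 0$. The composition $Z_g \xrightarrow{\pi_a} G^a \to G^a/\langle g\rangle$ annihilates $\langle g^c\rangle$ and so descends to $\bar\pi\colon Z_g/\langle g^c\rangle \to G^a/\langle g\rangle$. The map $z \mapsto (z \bmod \langle g^c\rangle,\, \pi_a(z))$ then identifies $Z_g$ with the fibered product $(Z_g/\langle g^c\rangle) \times_{G^a/\langle g\rangle} G^a$: injectivity uses $\langle g^c\rangle \cap Z = 0$, and surjectivity follows because any lift of $a \in Z_g/\langle g^c\rangle$ can be corrected by a suitable power of $g^c$ so that $\pi_a$ matches the prescribed element of $G^a$. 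The identification of the kernel $\langle g^c\rangle$ with $\langle g\rangle$ in the pullback is induced by $\pi_a|_{\langle g^c\rangle}$.

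For the vector bundle assertion, the central observation is that $\langle g^c\rangle$ is central in $Z_g$ by the very definition of the centralizer, and that $V$ is a semisimple $Z_g$-module since $\ell \neq \char(k)$. Hence $V = V^{\langle g^c\rangle} \oplus V'$ as $Z_g$-module, with $\langle g^c\rangle$ acting on $V'$ without nonzero fixed vectors. Cyclic-group quotients of vector spaces are rational (classical Fischer), and an $H$-equivariant form of the no-name lemma, with $H := Z_g/\langle g^c\rangle$, makes $V'/\langle g^c\rangle$ birationally $H$-equivalent to an affine space with linear $H$-action on the base. Combining, $V/\langle g^c\rangle$ is birationally $V^{\langle g^c\rangle} \times \mathbb A^m$ as an $H$-variety, so $V/Z_g$ is a vector bundle over $\tilde V / H$ with $\tilde V := V^{\langle g^c\rangle}$ (enlarged by a free $H$-summand to ensure faithfulness if necessary).

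For the cohomological claim, I would decompose $s^2_K$ as the composite
$$
\rH^2(G^a,\Z/\ell^n) \xrightarrow{\mathrm{res}} \rH^2(\pi_a(Z_g),\Z/\ell^n) \xrightarrow{\mathrm{inf}} \rH^2(Z_g,\Z/\ell^n) \xrightarrow{\mathrm{inf}} \rH^2(G_K,\Z/\ell^n).
$$
The same-order hypothesis forces $\alpha|_{\langle g\rangle} = 0$, because the extension $\pi_a^{-1}(\langle g\rangle) \to \langle g\rangle$ splits via $g \mapsto g^c$. Since $g$ is primitive in $G^a$ (the case of interest, following Lemma~\ref{lemm:modify-g}), $\langle g\rangle$ is a direct summand of $\pi_a(Z_g)$, and Kunneth gives a decomposition $\rH^2(\pi_a(Z_g),\Z/\ell^n) \cong \rH^2(P) \oplus (\rH^1(P) \otimes \rH^1(\langle g\rangle)) \oplus \rH^2(\langle g\rangle)$ where $P$ is a complement to $\langle g\rangle$; the third summand is already killed. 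Using the vector bundle structure to realise $K$ as purely transcendental over $k(\tilde V/H)$, the Kummer character of $G_K$ corresponding to the $\langle g^c\rangle$-factor of $G_K \twoheadrightarrow Z_g$ can be represented by a function in the transcendental fibre coordinates, whence the mixed Kunneth term $\rH^1(P)\otimes \rH^1(\langle g\rangle)$ maps to zero in $\rH^2(G_K)$. Only the $\rH^2(P)$-summand contributes, and this is manifestly pulled back from $\rH^2(G^a/\langle g\rangle)$ via the inclusion $P \hookrightarrow G^a/\langle g\rangle$.

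The main obstacle will be the last step: rigorously identifying the Kummer-theoretic realization of the $\langle g^c\rangle$-direction in $G_K$ and verifying the vanishing of the mixed cup-product contribution. This requires exploiting the explicit structure of $V$ as a direct sum of induced representations $\mathrm{Ind}_Z^{G^c}\chi_j$ supplied by Lemma~\ref{lemm:explicit}, together with a careful $H$-equivariant choice of the trivialisation of $V'/\langle g^c\rangle$ over the base in the second step.
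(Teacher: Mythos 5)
Your proof of the first assertion, via the explicit fibered-product isomorphism $z \mapsto (z \bmod \langle g^c\rangle,\, \pi_a(z))$, is correct and matches what the paper's terse ``Immediate from the definitions'' must mean. The second assertion is handled in a plausible but somewhat hand-wavy way: the decomposition $V = V^{\langle g^c\rangle}\oplus V'$ is fine since $g^c$ is central in $Z_g$ and semisimple, but the parenthetical ``enlarged by a free $H$-summand to ensure faithfulness if necessary'' does not actually cohere with the claim that $V/Z_g$ is a bundle over $\tilde V/H$: if you enlarge $\tilde V$ you change the base, and the total space $V/Z_g$ is not altered to match. You would need to argue that the $H$-action on $V^{\langle g^c\rangle}$ is already generically free (which follows from the explicit structure $V_j=\mathrm{Ind}_Z^{G^c}(\chi_j)$ but requires a sentence), and you would need the no-name lemma in equivariant form to pass from $V/\langle g^c\rangle \to V^{\langle g^c\rangle}$ to $V/Z_g \to V^{\langle g^c\rangle}/H$.

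For the third assertion the approach is the wrong one, and you have, in effect, noticed this yourself by flagging the last step as an unresolved obstacle. There is a much shorter argument that does not need K\"unneth, Kummer theory, or the primitivity hypothesis you tacitly add. The map $s^2_K$ factors as $\rH^2(G^a,\Z/\ell^n)\xrightarrow{\mathrm{res}}\rH^2(\pi_a(Z_g),\Z/\ell^n)\xrightarrow{\mathrm{inf}}\rH^2(Z_g,\Z/\ell^n)\to\rH^2(G_K,\Z/\ell^n)$ exactly as you write, but now observe: since $Z\subset Z_g$, the preimage $\pi_a^{-1}(\pi_a(Z_g))$ equals $Z_g$, so the restriction $\alpha|_{\pi_a(Z_g)}$ is precisely the class of the extension $1\to Z\to Z_g\to\pi_a(Z_g)\to 1$; and the inflation of the class of an extension to the \emph{total group} of that extension is always zero (the pullback $Z_g\times_{\pi_a(Z_g)}Z_g$ splits via the diagonal). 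Hence $s^2_K(\alpha)=0$, which is trivially induced from $G^a/\langle g\rangle$, and this does not even use the same-order hypothesis on $g^c$ --- that hypothesis is what makes the first two assertions work. Your K\"unneth decomposition, if pushed through, would also have to reproduce this: after showing $\alpha|_{\langle g\rangle}=0$ you claim that only the $\rH^2(P)$-summand contributes, but in fact the inflations of the $\rH^2(P)$-part and the mixed part to $\rH^2(Z_g)$ cancel each other (their sum is the inflation of the whole extension class, which is zero), so the conclusion ``only $\rH^2(P)$ contributes and is pulled back from $G^a/\langle g\rangle$'' is both harder to establish and weaker than the actual fact. The gap you flag at the end --- identifying the Kummer realization of the $\langle g^c\rangle$-direction and killing the mixed cup-product term --- is real and would require nontrivial work; with the simpler observation above it is entirely avoided.
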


\begin{proof}
Immediate from the definitions. 
\end{proof}

\begin{lemm}
\label{lemm:action}
Consider a central extension of finite groups
$$
1\ra Z\ra G^c\stackrel{\pi_a}{\lra} G^a\ra 1
$$
and let $V=\oplus_j V_j$ be
a faithful representation of $G^c$ as in 
Lemma~\ref{lemm:explicit}, i.e., each $V_j=\Ind_Z^{G^c}(\chi_j)$, 
where $\{\chi_j\}_{j\in J}$ is a basis of $\Hom(Z,k^\times)$. Let 
$
\P:=\prod_{j\in J} \P(V_j).
$
Then:
\begin{enumerate}
\item  
$G^a$ acts faithfully on $\P$.
\item  
For any subgroup $\sigma\subset G^a$ the subset of $\sigma$-fixed points 
$\P^\sigma\subset \P$ is nonempty if and only if $\sigma\in \bar{\Sigma}(G^c)$.
\item  
Each irreducible component of $\P^\sigma$ is a product 
of projective subspaces of $\P(V_j)$, corresponding to different eigenspaces
of $\sigma$ in $V_j$, and distinct irreducible components are disjoint. 
\item Each irreducible component of $\P^\sigma$ is stable under the action of 
$H_{\sigma}\subset G^c$, 
the maximal subgroup  such that $[H_{\sigma},\pi_a^{-1}(\sigma)]=0$ in $G^c$;  
the action of $G^c/H_{\sigma}$ on the set of components of $\P^\sigma$ is free.
\item The action of $G^a$ on 
$\P^\circ:=\P\setminus \cup_{\sigma\in \bar{\Sigma}} \P^\sigma$ is free.
\end{enumerate}
\end{lemm}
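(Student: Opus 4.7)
The overall plan is to exploit the fact that because each $V_j = \Ind_Z^{G^c}(\chi_j)$ is induced from a character of the central subgroup $Z$, the element $Z$ acts by the scalar $\chi_j$ on $V_j$. Thus $Z$ acts trivially on $\P(V_j)$ and the induced $G^a$-action on $\P = \prod_j \P(V_j)$ is well defined. The rest of the proof is a coordinated computation using this structure, in which the central role is played by the hypothesis that $\{\chi_j\}_{j\in J}$ is a \emph{basis} of $\Hom(Z,k^\times)$: this will force commutators in $Z$ to vanish whenever all the $\chi_j$ vanish on them.

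For (1) I would argue that if $g \in G^a$ acts trivially on $\P$, a lift $g^c$ acts as a scalar on each $V_j$, and the explicit induced-representation basis $\{\delta_{g_i}\otimes 1\}$ shows directly that such a scalar action forces $g^c \in Z$, hence $g = e$ in $G^a$. For (2), the backward direction is easy: if $\sigma \in \bar\Sigma(G^c)$ then $\tilde\sigma := \pi_a^{-1}(\sigma)$ is abelian, simultaneously diagonalizable on each $V_j$, and any choice of simultaneous eigenlines produces a $\sigma$-fixed point. For the forward direction, I lift a fixed point $(p_j)$ to vectors $v_j \in V_j$; then each $g^c \in \tilde\sigma$ acts on $k v_j$ by a character $\eta_j(g^c)$. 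Commutators of $\tilde\sigma$ therefore act trivially on each $k v_j$, so $\chi_j([g_1^c,g_2^c]) = 1$ for every $j$; since $\{\chi_j\}$ is a basis of $\Hom(Z,k^\times)$ this forces $[g_1^c,g_2^c] = e$ and $\tilde\sigma$ is abelian.

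For (3), once $\tilde\sigma$ is abelian I decompose each $V_j$ into $\tilde\sigma$-eigenspaces $V_j^{\eta_j}$ indexed by extensions $\eta_j$ of $\chi_j$ to $\tilde\sigma$. The fixed locus $\P(V_j)^\sigma$ is precisely the disjoint union $\bigsqcup_{\eta_j} \P(V_j^{\eta_j})$, because eigenvectors for distinct $\eta_j$ have zero overlap in $\P(V_j)$. Taking the product over $j$ gives the stated description, with irreducible components $C_{(\eta_j)} := \prod_j \P(V_j^{\eta_j})$ that are pairwise disjoint. For (4), the inclusion $H_\sigma \subset G^c$ commutes with $\tilde\sigma$, so any $h \in H_\sigma$ preserves each eigenspace $V_j^{\eta_j}$ and hence each component. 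Conversely, suppose $g \in G^a$ stabilizes a component $C_{(\eta_j)}$; then some lift $g^c$ preserves each $V_j^{\eta_j}$, and repeating the commutator calculation of step (2) with $v \in V_j^{\eta_j}$ and $g_0^c \in \tilde\sigma$ yields $\chi_j([g^c,g_0^c]) = 1$ for every $j$. The basis property of $\{\chi_j\}$ again forces $[g^c,g_0^c] = e$ in $Z$, so $g^c \in H_\sigma$; this shows the stabilizer of each component is exactly $\pi_a(H_\sigma)$, giving freeness of $G^c/H_\sigma$ on the set of components. Part (5) then follows immediately from (2): a point of $\P$ with non-trivial stabilizer $\sigma$ lies in $\P^\sigma$, and its stabilizer (at least the cyclic subgroup generated by any stabilizing element) is liftable since $Z$ is central, hence in $\bar\Sigma(G^c)$.

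The main obstacle is the commutator computation that appears in (2) and again in (4): one must bootstrap from the fact that lifts of $\sigma$ act on one-dimensional lines (or preserve eigenspaces) to the conclusion that \emph{all} elements of $\tilde\sigma$, or all elements stabilizing a component, commute in $G^c$ itself. This is where the choice of $V$ — specifically, taking $V_j$ induced from a \emph{basis} of $\Hom(Z,k^\times)$ — becomes essential: without this the trace on each $\chi_j$ would not determine an element of $Z$, and the characterization of $\bar\Sigma$ via fixed points would fail.
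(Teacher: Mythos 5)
Your proof is correct and follows essentially the same strategy as the paper's: simultaneous diagonalization of $\tilde\sigma := \pi_a^{-1}(\sigma)$ when it is abelian, the observation that commutators of lifts land in $Z$ and hence act on each $V_j$ by the scalar $\chi_j([g_1^c,g_2^c])$, and the basis property of $\{\chi_j\}$ to conclude that such a commutator vanishes whenever all $\chi_j$ do. You give a slightly cleaner, more explicit treatment of parts (1) and (5), which the paper leaves implicit, and you phrase (2) and the freeness in (4) as direct arguments rather than the paper's contrapositive ("if $\sigma\notin\bar\Sigma$, for some $j$ the commutator acts as a nontrivial scalar on $V_j$"), but the underlying mechanism is identical.
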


\begin{proof}
Since the order of $G^c$ is coprime to the characteristic of $k$, 
every $g\subset G^c$ is semi-simple and we can decompose 
$$
V_j = \oplus_i V_j(\lambda_i(g)),
$$ 
as a sum of eigenspaces.
The subset of $g$-fixed
points splits as a product $\prod_{ij} \P(V_j(\lambda_i(g)))$, where
the product runs over different eigenvalues in different 
$V_j$. It follows that the subset of $g$-fixed points $\P^g\subset \mathbb P$ is a union of
products of projective subspaces of $\P(V_j)$.

If $\sigma\in\bar{\Sigma}$ then its elements
can be simultaneously diagonalized. Hence
the subset of fixed points in $\mathbb P=\prod_j \P(V_j)$
is a union of products of projective subspaces, and there is a Zariski open
subvariety of $\mathbb P$ stable under the action of the 
preimage $\tilde{\sigma}:=\pi_a^{-1}(\sigma)\subset G^c$. 

Let $\sigma:=\langle g,h\rangle\subset G^a$ be a subgroup such that $\sigma\notin \bar{\Sigma}$. 
Then the same holds for the images of $g,h$ 
in $\GL(V_j)$, for at least one $j\in J$. Thus the commutator
$[g,h]\in \GL(V_j)$ is a nontrivial scalar matrix, hence they
have no common eigenvectors, i.e.,  no common fixed points in $\P(V_j)$.
Thus if $\sigma\notin \bar{\Sigma}$ then $\sigma$ has no
fixed points in $\prod_j \P(V_j)$.
Note that projective subspaces corresponding to different eigenvalues
of $g$ do not intersect in $\P(V_j)$ and hence $\P^\sigma$
splits into a disjoint union of products of projective
subspaces of different $\P(V_j)$.

Assume that $[h,\tilde{\sigma}] =0$ in $G^c$ for some $h\in G^c$. Then 
$\langle h,\tilde{\sigma}\rangle$ has a fixed point in each component of $\P^\sigma$
and $h$ maps every component of $\P^\sigma$ into
itself. Thus a subgroup $H\subset G^c$, with $[H,\tilde{\sigma}] =0$ maps
every component of $\P^\sigma$ into itself.

Assume that $\langle h,\gamma\rangle \notin \bar{\Sigma}$, for some $\gamma\in \sigma$. Then
for some $j$, the images of $h,\gamma$ in $\GL(V_j)$ 
have nonintersecting invariant subvarieties in $\P(V_j)$.
In particular, $h$ does not preserve any component of $\P^\sigma$.
\end{proof}

\begin{lemm}
\label{lemm:XP}
Let $K=k(X)$ be a function field with Galois group $G_K$. Given 
a surjection $G_K\ra G^c$, onto some finite central extension of
an abelian group $G^a$, let
$\P=\prod_j \P(V_j)$ be the space constructed in Lemma~\ref{lemm:action}.
Then there is a rational map $\varrho:X\ra \P/G^a$ such that
\begin{itemize}
\item 
$\varrho$  maps the generic point of $X$ into ${\mathbb P}^\circ/G^a$;
\item 
the homomorphism 
$$
s_K:\rH^*_s(G^a,\Z/\ell^n)\ra \rH^*(G_K,\Z/\ell^n)
$$
factors through the cohomology of $\bP^\circ/G^a$.
 \end{itemize}
\end{lemm}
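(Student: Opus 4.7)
The plan is to realize the surjection $G_K\twoheadrightarrow G^c$ geometrically using Lemma~\ref{lemm:vg} and then project from $V$ down to $\P=\prod_j \P(V_j)$. First, I would shrink $X$ to an affine open subset on which the composition $G_K\twoheadrightarrow G^c$ factors through a surjection $\chi\colon \hat\pi_1(X)\twoheadrightarrow G^c$, corresponding to a connected \'etale $G^c$-torsor $\tilde X\to X$. Applying Lemma~\ref{lemm:vg} to the faithful $G^c$-representation $V=\oplus_j V_j$, together with a base point $x_0\in X$ and a chosen $v_0\in V^\circ$, yields a $G^c$-equivariant morphism $\tilde X\to V$ descending to $f\colon X\to V/G^c$ with $f(x_0)$ equal to the image of $v_0$. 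Composing with the natural rational projection $p\colon V/G^c \dashrightarrow \P/G^a$ --- well-defined on the open locus $V^{\circ\circ}\subset V$ where every coordinate $v_j$ is nonzero, since $Z$ acts on $V_j$ through $\chi_j$ and is absorbed by the projective scalars --- produces the candidate rational map $\varrho := p\circ f\colon X\dashrightarrow \P/G^a$.

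To secure the first bullet, I would choose $v_0$ with all coordinates nonzero and with $[v_0]\in \P^\circ$; such $v_0$ exists because $\P^\circ$ is a nonempty Zariski open subset of $\P$ by Lemma~\ref{lemm:action}(5). A short check then shows such $v_0$ automatically lies in $V^\circ$: any $g\in G^c$ fixing $v_0$ projects to an element of $G^a$ stabilizing $[v_0]\in\P^\circ$, hence lies in $Z$, and the basis property of $\{\chi_j\}$ together with $v_j\ne 0$ forces $g=e$. Thus Lemma~\ref{lemm:vg} applies, $\varrho(x_0)\in \P^\circ/G^a$, and by openness of $\P^\circ/G^a$ the generic point of $X$ also maps into $\P^\circ/G^a$.

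For the second bullet, I would invoke Lemma~\ref{lemm:action}(5) again: $G^a$ acts freely on $\P^\circ$, so $\P^\circ\to \P^\circ/G^a$ is an \'etale $G^a$-torsor, determining a surjection $\hat\pi_1(\P^\circ/G^a)\twoheadrightarrow G^a$ and a natural pullback map $\rH^*(G^a,\Z/\ell^n)\to \rH^*_{et}(\P^\circ/G^a,\Z/\ell^n)$. Restricting to the open subset $X^\circ:=\varrho^{-1}(\P^\circ/G^a)\subset X$, the $G^c$-equivariant lift $\tilde X\to V$ descends, modulo $Z$, to a $G^a$-equivariant commutative square whose vertical arrows are $G^a$-torsors --- the left one being the restriction of $\tilde X/Z\to X$ and the right one being $\P^\circ\to \P^\circ/G^a$. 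This identifies $\varrho^*$ of the right-hand torsor with the $G^a$-torsor induced by the composition $G_K\to G^c\to G^a$, yielding the desired factorization of $s_K$ through $\rH^*_{et}(\P^\circ/G^a,\Z/\ell^n)\to \rH^*(G_K,\Z/\ell^n)$.

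The main obstacle I expect is the verification in this last paragraph: certifying that the $G^a$-torsor obtained by restricting $\tilde X/Z\to X$ to $X^\circ$ coincides with the pullback of $\P^\circ\to \P^\circ/G^a$ along $\varrho$. This is precisely where the $G^c$-equivariance furnished by Lemma~\ref{lemm:vg} is indispensable: it guarantees that the morphism $\tilde X\to V$ is compatible with the group actions on both sides, so the induced $G^a$-maps on the relevant open loci match those coming from projectivization of $V$. Once this functoriality is in place, the factorization of $s_K$ follows formally from the contravariance of \'etale cohomology.
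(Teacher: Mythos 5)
Your proof follows essentially the same strategy as the paper's: use the $G^c$-torsor $\tilde X^\circ\to X^\circ$ and a regular-representation argument (which is exactly what Lemma~\ref{lemm:vg} encapsulates) to produce a $G^c$-equivariant map $\tilde X^\circ\to V$, hence $X^\circ\to V/G^c$, and then project to $\P/G^a$. The paper's own proof is quite terse and stops after constructing the map to $V/G^c$; your proposal correctly supplies the two pieces it leaves implicit, namely the choice of $v_0$ with all coordinates nonzero and $[v_0]\in\P^\circ$ (together with the observation that the free action of $G^a$ on $\P^\circ$ and the basis property of $\{\chi_j\}$ force $v_0\in V^\circ$, so Lemma~\ref{lemm:vg} applies and the generic point of $X$ lands in $\P^\circ/G^a$), and the identification of the pullback along $\varrho$ of the $G^a$-torsor $\P^\circ\to\P^\circ/G^a$ with the $G^a$-torsor classified by $G_K\to G^c\to G^a$, from which the factorization of $s_K$ is formal. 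In short, your argument is the paper's, made explicit and complete.
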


\begin{proof}
Let $X^\circ\subset X$ be an open affine subvariety such that 
$\pi_1(X^\circ)$ surjects onto $G^c$. Let $\tilde{X}^\circ \ra X^{\circ}$ be the induced 
unramified $G^c$covering. Then $k[\tilde{X}^\circ]$ decomposes into an infinite direct sum
of $G^c$-representations. Fix a point $\tilde{X}^\circ$ and consider its orbit.
The restriction of $k[\tilde{X}^\circ]$ to this orbit defines a regular
quotient $G^c$-representation isomorphic to $k[G]$; this admits homomorphisms to $k[V]$,
corresponding to maps $X\ra V/G^c$.

\end{proof}

\section{Basic valuation theory}
\label{sect:valuation}
 
Let $X$ be a variety over $k=\bar{\F}_p$, 
$K=k(X)$ its function field, and $G_K$ the absolute Galois group of $K$. 
We write $\Val_K$ for the set of
valuations of $K$ and 
$\DVal_K$ for the subset of divisorial valuations. 
The corresponding residue fields will be denoted by $\KK_{\nu}$. For $\nu\in \Val_K$,
let $D_{\nu}\subset G_K$ denote a decomposition group of $\nu$ and 
$I_{\nu}\subset D_{\nu}$ the inertia
subgroup; we have $G_{\KK_{\nu}}=D_{\nu}/I_{\nu}$.
The pro-$\ell$-quotients of these groups will be denoted by  $\G_K$,  $\D_{\nu}$, and 
$\I_{\nu}$, respectively. We will always assume that $p\neq \ell$. 
The corresponding abelianizations will be denoted by 
$\G^a_K, \D^a_{\nu}$, and $\I^a_{\nu}$;
their canonical central extensions by $\G^c_K, \D^c_K$, and $\I^c_K$. 
Under our assumptions, $\G^a_K$ is a free $\Z_{\ell}$-module of infinite rank.

\begin{lemm}
\label{lemm:how-used}
For $\nu\in \Val_K$ consider the commutative diagram

\

\centerline{
\xymatrix{ 
D_{\nu} \ar[r]\ar[d]_{\pi_{\nu}} &  G_K  \ar[d]^{\pi} \\
\D^a_{\nu} \ar[r]_{\delta^a_{\nu}} & \G^a_K,
}
}

\noindent 
where $\pi_v$ and $\pi$ are the canonical projections and $\delta^a_{\nu}$
is the induced homomorphism.  Then  $\delta^a_{\nu}$
is injective with primitive image. 
In particular, $\delta^a_{\nu}$ embeds $\I^a_{\nu}$ 
as a primitive subgroup of $\D^a_{\nu}$. 
\end{lemm}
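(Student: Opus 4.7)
The plan is to translate the statement via Kummer theory into a surjectivity assertion about multiplicative groups, verify it using Hensel's lemma, and then deduce the two desired properties of $\delta^a_\nu$ by elementary $\Z_\ell$-module arithmetic.

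Since $k=\bar{\F}_p$ contains all $\ell$-power roots of unity and $\ell\neq p$, Kummer theory identifies
$$\G^a_K/\ell^n \cong \Hom(K^*/(K^*)^{\ell^n},\mu_{\ell^n}) \quad \text{and} \quad \D^a_\nu/\ell^n \cong \Hom(K_\nu^*/(K_\nu^*)^{\ell^n},\mu_{\ell^n}),$$
where $K_\nu$ is the henselization of $K$ at $\nu$, so that $D_\nu \cong G_{K_\nu}$ and $\D_\nu \cong \G_{K_\nu}$. Under these identifications, $\delta^a_\nu\otimes \Z/\ell^n$ is the $\mu_{\ell^n}$-dual of the natural map
$$\iota_\nu \colon K^*/(K^*)^{\ell^n} \lra K_\nu^*/(K_\nu^*)^{\ell^n}$$
induced by $K^*\hookrightarrow K_\nu^*$; thus injectivity of $\delta^a_\nu\otimes \Z/\ell^n$ is equivalent to surjectivity of $\iota_\nu$.

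The crucial step is to prove that $\iota_\nu$ is surjective for every $n$. Given $x\in K_\nu^*$, I use that the value group $\nu(K^*)=\Gamma_\nu$ coincides with that of $K_\nu$ to pick $t\in K^*$ with $\nu(t)=\nu(x)$, so that $u:=x/t\in \cO_{K_\nu}^*$. Since the residue field of $K_\nu$ is again $\KK_\nu = \cO_\nu/\mm_\nu$, I lift $\bar u\in \KK_\nu^*$ to some $u_0\in \cO_\nu^*\subset K^*$, giving $u/u_0\in 1+\mm_{K_\nu}$. Hensel's lemma applied to $X^{\ell^n}-(1+m)$ in the henselian ring $\cO_{K_\nu}$ -- its derivative $\ell^n X^{\ell^n-1}$ is a unit at $X=1$ because $\ell\neq p=\mathrm{char}(\KK_\nu)$ -- shows that every principal unit of $K_\nu$ is an $\ell^n$-th power. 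Hence $x\equiv tu_0 \pmod{(K_\nu^*)^{\ell^n}}$ with $tu_0\in K^*$, establishing surjectivity.

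Dualizing, $\delta^a_\nu\otimes \Z/\ell^n$ is injective for every $n$. Since $\G^a_K$ is a free $\Z_\ell$-module (hence torsion-free) and $\D^a_\nu$ is Hausdorff in its $\ell$-adic topology, this yields both injectivity of $\delta^a_\nu$ (any kernel element lies in $\bigcap_n \ell^n\D^a_\nu = 0$) and primitivity of its image (if $\ell g=\delta^a_\nu(d)$, injectivity modulo $\ell^2$ forces $d=\ell d'$, whence $\ell(g-\delta^a_\nu(d'))=0$ and torsion-freeness of $\G^a_K$ gives $g=\delta^a_\nu(d')$). For the inertia assertion, applying the same Kummer analysis to the splitting $K_\nu^*\otimes\Z_\ell \cong \KK_\nu^*\otimes \Z_\ell \oplus \Gamma_\nu\otimes\Z_\ell$ (the principal units being $\ell$-divisible, again by Hensel) identifies $\D^a_\nu \cong \G^a_{\KK_\nu}\oplus \I^a_\nu$, exhibiting $\I^a_\nu$ as a direct summand of $\D^a_\nu$, and composition with $\delta^a_\nu$ then preserves primitivity. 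The main obstacle is the henselian surjectivity of $\iota_\nu$; everything else is formal duality and routine $\Z_\ell$-module arithmetic.
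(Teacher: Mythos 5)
The paper states Lemma~\ref{lemm:how-used} without proof (the authors evidently view it as standard valuation-theoretic background), so there is no official argument to compare against. Your proof is the natural Kummer-theoretic one and it is correct. The chain of reductions is sound: identifying $D_\nu$ with $G_{K^h}$ for the henselization $K^h=K_\nu$ (valid for arbitrary valuations, not just divisorial ones, and preserving both residue field and value group); using that $\mu_{\ell^\infty}\subset k\subset \cO_\nu$ so that Kummer theory identifies $\Hom(\G^a_K,\Z/\ell^n)$ with $K^*/(K^*)^{\ell^n}$ and likewise for $K_\nu$; translating injectivity of $\delta^a_\nu\bmod\ell^n$ into surjectivity of $\iota_\nu$ by Pontryagin duality for profinite $\Z/\ell^n$-modules; and then proving that surjectivity by writing $x=t\,u_0\,(1+m)$ and invoking Hensel's lemma on $X^{\ell^n}-(1+m)$, which applies precisely because $\ell\neq p=\mathrm{char}(\KK_\nu)$. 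The passage from injectivity mod $\ell^n$ for all $n$ to injectivity and primitivity of $\delta^a_\nu$ is routine given that $\G^a_K$ is a torsion-free $\Z_\ell$-module (which the paper records explicitly) and $\D^a_\nu$ is $\ell$-adically separated as a pro-$\ell$ group.

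Two small points. First, in the primitivity step it is injectivity modulo $\ell$ (not $\ell^2$) that forces $d\in\ell\D^a_\nu$ from $\delta^a_\nu(d)=\ell g$; what you wrote is harmless but slightly off. Second, note that the lemma's final sentence does not follow formally from primitivity of the image of $\delta^a_\nu$ alone: one must also know $\I^a_\nu$ is primitive inside $\D^a_\nu$. Your splitting $\D^a_\nu\cong\G^a_{\KK_\nu}\oplus\I^a_\nu$, which exhibits $\I^a_\nu$ as a direct summand, is exactly the missing ingredient; it rests on the same Hensel observation that principal units of $K_\nu$ are uniquely $\ell$-divisible, so this is consistent with the rest of the argument. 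Your proof is complete and fills the gap the paper leaves implicit.
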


Let $\Sigma(\G^c_K)$ be the set of primitive topologically 
noncyclic subgroups of $\G^a_K$
whose preimage in $\G^c_K$ is abelian. By \cite[Section 6]{bt-commute}, we have:

\begin{thm}
\label{thm:rank}
Assume that $\dim(X)\ge 2$. Then 
$$
\rk_{\Z_{\ell}}(\sigma)\le \dim(X), \quad \text{ for all } \quad \sigma\in \Sigma(\G^c_K).
$$
\end{thm}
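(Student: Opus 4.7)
The plan is to embed every $\sigma \in \Sigma(\G^c_K)$ into the decomposition subgroup of a single valuation $\nu$ of $K$ and then bound its rank using Abhyankar's inequality, via an induction on $\dim(X) = \trdeg(K/k)$.

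First, I would use the defining property of $\Sigma(\G^c_K)$: the preimage $\tilde\sigma \subset \G^c_K$ is abelian, so every pair of elements in $\tilde\sigma$ commutes. The commuting-pair theorems of \cite{bt-commute} assign, to each pair of commuting primitive liftable elements of $\G^c_K$, a valuation whose decomposition subgroup contains the pair. Extending this statement from pairs to the finitely generated topological $\Z_\ell$-module $\tilde\sigma$ would produce a valuation $\nu \in \Val_K$ with $\sigma \subseteq \delta^a_\nu(\D^a_\nu)$; the embedding $\delta^a_\nu$ is primitive by Lemma~\ref{lemm:how-used}, so no rank is lost in this step.

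Next, I would decompose $\sigma$ along the inertia filtration
$$1 \to \I^a_\nu \to \D^a_\nu \to \G^a_{\KK_\nu} \to 1,$$
setting $\sigma_I := \sigma \cap \I^a_\nu$ and letting $\sigma_D$ denote the image of $\sigma$ in $\G^a_{\KK_\nu}$; both are free $\Z_\ell$-modules of finite rank and
$$\rk_{\Z_\ell}(\sigma) = \rk_{\Z_\ell}(\sigma_I) + \rk_{\Z_\ell}(\sigma_D).$$
The pro-$\ell$ inertia $\I^a_\nu$ is canonically $\Hom(\Gamma_\nu, \Z_\ell(1))$, whence $\rk_{\Z_\ell}(\sigma_I) \leq \rk_\Q(\Gamma_\nu \otimes_\Z \Q)$. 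For the residue quotient, the image of $\tilde\sigma$ in $\G^c_{\KK_\nu}$ remains abelian, so after $\Z_\ell$-saturation $\sigma_D$ is either cyclic or lies in $\Sigma(\G^c_{\KK_\nu})$. Induction on transcendence degree then gives $\rk_{\Z_\ell}(\sigma_D) \leq \trdeg(\KK_\nu/k)$, and the Abhyankar inequality
$$\rk_\Q(\Gamma_\nu \otimes_\Z \Q) + \trdeg(\KK_\nu/k) \leq \trdeg(K/k) = \dim(X)$$
delivers the desired bound. The base cases $\trdeg(\KK_\nu/k) \in \{0,1\}$ are immediate: in the former $\G_{\KK_\nu}$ has trivial pro-$\ell$ part; in the latter, for any function field of a curve over $\bar\F_p$ every decomposition subgroup has abelianization of $\Z_\ell$-rank at most $1$, so there are no noncyclic liftable subgroups to consider.

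The main obstacle is the first step: promoting the pairwise valuation data into a \emph{single} valuation $\nu$ dominating all of $\tilde\sigma$ at once. This requires the flag-of-valuations formalism and the compatibility statements of \cite{bt-commute}, which must be invoked to organize the valuations attached to different pairs of generators into one composite valuation with the correct inertia and decomposition structure. A secondary subtlety is that for non-divisorial $\nu$ the residue field $\KK_\nu$ need not itself be the function field of a variety over $k$; however, $\KK_\nu$ is algebraic over $k(Y)$ for some $Y$ with $\dim(Y) = \trdeg(\KK_\nu/k)$, and comparing the pro-$\ell$ Galois structures of $\KK_\nu$ and $k(Y)$ lets the induction go through with the same rank bound.
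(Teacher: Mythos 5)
The paper gives no proof of this theorem; it is cited from \cite[Section~6]{bt-commute}, and the immediately following result (Theorem~\ref{theo:main-valu}, quoted from \cite[Corollary~6.4.4]{bt-commute}) is exactly the valuation-theoretic input your argument needs. So the ``main obstacle'' you flag --- producing a single valuation $\nu$ with $\sigma \subseteq \D^a_\nu$ --- is not an open step but a theorem that is already on the page. That result in fact gives more: $\I^a_\nu$ lies \emph{inside} $\sigma$ with $\Z_\ell$-corank at most one, which yields the rank bound more directly than your induction, via $\rk(\sigma) \le \rk(\I^a_\nu) + 1 \le \rk_\Q(\Gamma_\nu\otimes\Q) + 1$ when $\trdeg(\KK_\nu/k)\ge 1$, and $\sigma = \I^a_\nu = \D^a_\nu$ when $\trdeg(\KK_\nu/k)=0$. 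Your inductive version is a valid alternative organization, but it is not cheaper: it still needs the same theorem to enter a decomposition group in the first place.

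Two points in your sketch deserve more care. First, your justification of the one-dimensional base case is off: it is not that decomposition groups of a curve have abelianization of rank $\le 1$ (a non-divisorial valuation composed with the residue structure could a priori look more complicated), but rather that for a curve $E$ over $\bar\F_p$ the pro-$\ell$ quotient $\G_E$ is a \emph{free} pro-$\ell$ group, so no two non-commensurable elements commute in $\G^c_E$ and hence $\Sigma(\G^c_E)=\emptyset$; the rank bound is then vacuous and, more to the point, your inductive step produces no noncyclic liftable $\sigma_D$. Second, the claim that the image of $\tilde\sigma$ in $\G^c_{\KK_\nu}$ is abelian requires verifying that $\tilde\sigma$ (the preimage of $\sigma$ in $\G^c_K$) actually lands in the image of $\D^c_\nu$; the paper's Lemma~\ref{lemm:how-used} gives primitivity of $\delta^a_\nu$ on the abelianized level, but the compatibility of the central extensions $\D^c_\nu \to \G^c_K$ and $\D^c_\nu \to \G^c_{\KK_\nu}$ with the preimage construction is a genuine, if routine, verification and should not be passed over silently.
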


The following key result gives a valuation-theoretic 
interpretation of liftable subgroups in $\G^a_K$;
it is crucial for the reconstruction of 
function fields in \cite{bt0} and \cite{bt1}.

\begin{thm} \cite[Corollary 6.4.4]{bt-commute}
\label{theo:main-valu}
Assume that $\dim(X)\ge 2$ and
let $\sigma\in\Sigma(\G_K^c)$. 
Then there exists a valuation $\nu\in \Val_K$
such that $\I^a_{\nu}$ is a subgroup of $\sigma$ of $\Z_{\ell}$-corank at most one
and $\sigma\subseteq \D^a_{\nu}$.  
\end{thm}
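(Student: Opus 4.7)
The plan is to translate the liftability of $\sigma$ into a vanishing of Brauer-type symbols via Merkurjev--Suslin, and then to extract a valuation with the required inertia/decomposition properties by exploiting the special arithmetic structure of function fields over $k=\bar{\mathbb F}_p$.

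\emph{Step 1: Symbol reformulation.} Fix a topological $\Z_\ell$-basis $\gamma_1,\ldots,\gamma_r$ of $\sigma$, with $r\ge 2$ (noncyclicity) and $r\le\dim(X)$ by Theorem~\ref{thm:rank}. Liftability of $\sigma$ means that the preimage $\pi^{-1}(\sigma)\subseteq\G^c_K$ is abelian, so the commutator pairing on $\sigma\wedge\sigma$ vanishes. Via Kummer theory, pick $f_1,\ldots,f_r\in K^\times\otimes\Z_\ell$ whose associated characters form a basis dual to the $\gamma_i$. Merkurjev--Suslin (the degree~$2$ case of Bloch--Kato) then identifies the vanishing of the commutator pairing with the vanishing of all symbols $\{f_i,f_j\}$ in $\rH^2(G_K,\Z/\ell^n)$, for every $n$.

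\emph{Step 2: Valuation from a single trivial symbol.} Over $K=k(X)$ with $k=\bar{\mathbb F}_p$, the triviality of a symbol $\{f,g\}$ between two multiplicatively $\ell$-independent elements is governed by valuations: Faddeev-type reciprocity reduces it to residues at divisorial places, and the Tsen--Lang triviality of Brauer groups of curves over $\bar{\mathbb F}_p$ forces those residues to be concentrated at a single valuation. Concretely, there exists $\nu\in\Val_K$ with both $f,g\in\D^a_\nu$, and such that the span of their images meets $\I^a_\nu$ in corank at most one.

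\emph{Step 3: Assembly into one valuation.} To handle the entire $\sigma$ I would iterate. The pair $(f_1,f_2)$ yields a first $\nu_1$. Projecting $f_3,\ldots,f_r$ modulo $\sigma\cap\I^a_{\nu_1}$ yields a liftable subgroup in the Galois group of the residue field $\KK_{\nu_1}$, which is the function field of a variety of strictly smaller dimension by Abhyankar's inequality combined with Theorem~\ref{thm:rank}. Iterating and composing the chain of valuations so obtained produces a single $\nu\in\Val_K$ with $\sigma\subseteq\D^a_\nu$. The corank bound $\le 1$ for $\I^a_\nu\cap\sigma$ in $\sigma$ then falls out of the rank bookkeeping: at every descent step the dimension drops by exactly the $\Z_\ell$-rank contributed by inertia, so exactly one character of $\sigma$ survives as an unramified class at the terminal residue level.

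\emph{Main obstacle.} The decisive difficulty is the harmonization in Step~3: pairwise symbol triviality yields many valuations $\nu_{ij}$, but showing that they can be combined into one single $\nu$ witnessing all of $\sigma$ simultaneously, with the sharp corank-one control, requires the flag-of-valuations technology developed in \cite{bt-commute}. The crucial input there is that successive residue fields remain function fields over $\bar{\mathbb F}_p$, so that their Brauer groups continue to be controlled by valuations at each stage; this compatibility is what forces the various $\nu_{ij}$ to be compositions of a common underlying flag and would fail over other base fields.
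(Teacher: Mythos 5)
The paper states this result purely as a citation of \cite[Corollary 6.4.4]{bt-commute} and does not provide a proof, so there is no in-paper argument to compare against. Your sketch does identify the correct broad framework used in that reference: Kummer duality identifies elements of $\G^a_K$ with characters on $K^\times$, liftability of $\sigma$ to an abelian subgroup of $\G^c_K$ translates, via Merkurjev--Suslin, into vanishing of the pairwise symbols of a dual basis of functions, and the general shape of your Step~3 --- descending through residue fields (which over $\bar{\F}_p$ remain function fields of lower dimension) and composing valuations into a flag --- is aligned with the mechanism in the source.

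The proposal has a genuine gap at Step~2, which is where all the work lies. The claim that ``Faddeev-type reciprocity reduces [triviality of $\{f,g\}$] to residues at divisorial places, and the Tsen--Lang triviality of Brauer groups of curves over $\bar{\F}_p$ forces those residues to be concentrated at a single valuation'' is not correct as stated and is not justified. Vanishing of a symbol does not by itself produce a single valuation $\nu$ with both classes in $\D^a_\nu$ and one (up to corank one) in $\I^a_\nu$; the Faddeev sequence and Tsen's theorem control where a Brauer class \emph{can} have nonzero residue, not the existence of a valuation carrying the pair. The actual content of \cite{bt-commute} is a delicate geometric analysis of the multiplicative structure of $K^\times$ under the commuting-pair constraint, leading to the dichotomy that a nondegenerate $c$-pair is ``flag-like,'' i.e.\ arises from a flag of subvarieties. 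Your ``main obstacle'' paragraph then explicitly appeals to ``the flag-of-valuations technology developed in \cite{bt-commute}'' --- the very result under proof --- so the argument is circular exactly at the decisive step. An independent derivation of the base case (the rank-2 commuting pair) is what would be required, and it is not supplied.
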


We recall some background from valuation theory. 
For $\nu\in \Val_K$ let $\Gamma_{\nu} = \nu(K^\times)$ be its value group. 
We have a fundamental inequality
\begin{equation}
\label{eqn:fund}
\trdeg_k(K) \ge \trdeg_k(\KK_{\nu}) + \dim_{\Q}( \Gamma_{\nu}\otimes \Q).
\end{equation}

A valuation $\nu$ is called {\em algebraic of rank}  $r$ if its value group
$\Gamma_{\nu}:=\nu(K^\times)$ is isomorphic to $\Z^r$ and admits a filtration
$$
\Z^r\simeq \Gamma_{\nu}=
\Gamma_{\nu_r}\supsetneq \ldots \supsetneq \Gamma_{\nu_1}\simeq\Z
$$
by free abelian subgroups $\Gamma_{\nu_j}$ corank $j$ 
which are value groups of embedded compatible valuations $\nu_j$. 
Such valuations arise from flags of irreducible subvarieties 
$\mathfrak c_j$ of codimension $j$
$$
\mathfrak c_r\subsetneq  \mathfrak c_{r-1} \subsetneq \ldots 
\subsetneq \mathfrak c_1\subsetneq X.
$$
A valuation is called an {\em Abhyankar} valuation if 
equality holds in \eqref{eqn:fund} 
(alternatively, $\nu$ is said to be without transcendence defect); 
every algebraic valuation is an Abhyankar valuation.

Let $\nu$ be an Abhyankar valuation of $K$. 
By \cite[Theorem 3.4(a)]{knaf-ann},
$$
\Gamma_{\nu}\simeq \Z^r, 
$$
for some $r$. By \cite[Theorem 1.1]{knaf-ann}, for any 
$\mathcal F:=\{ f_1,\ldots, f_m\} \subset \mathfrak o_{\nu}$
there exists a projective model $X$ of $K$ such that 
\begin{itemize}
\item the center $\mathfrak c=\mathfrak c_X(\nu)$ of $\nu$ 
is a generically smooth subvariety of 
$X$ of dimension $\dim_{\Q}( \Gamma_{\nu}\otimes \Q)$;
\item there exists a regular parameter system 
$(a_1,\ldots, a_r)$ of $\mathcal O_{X,\mathfrak c}$   
such that each $f_j$, $j=1, \ldots, m$, is an 
$\mathcal O_{X,\mathfrak c}$-monomial in $\{a_1,\ldots, a_{r}\} $.
\end{itemize}
We say that $\nu$ admits a smooth monomial uniformization on 
$X$ with respect to $\mathcal F$. In particular, we have:

\begin{prop}
\label{prop:knaf--ann}
Let $K$ be a function field of an algebraic variety over $k=\bar{\F}_p$.
Let $\nu$ be an algebraic valuation of rank  $r$. 
Then there exists a projective model $X$ of $K$ over $k$ 
such that 
the centers $\mathfrak c_j$ of $\nu_j$ on $X$
are irreducible subvarieties of codimension $j$
which are smooth at the general point of $\mathfrak c_r$. 
\end{prop}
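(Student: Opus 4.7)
The plan is to deduce the proposition from Knaf--Kuhlmann's local uniformization theorem (Theorem~1.1 of \cite{knaf-ann}, already invoked in the text), applied to a collection of elements $\mathcal{F}$ chosen so as to detect the entire flag of coarsenings $\nu_1,\dots,\nu_r=\nu$, followed by a toric (monomial) refinement to align the regular parameter system with the standard basis of the value group.

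First, I would record that an algebraic valuation of rank $r$ has $\Gamma_\nu \simeq \Z^r$ with the lexicographic order, so each coarsening $\nu_j$ is Abhyankar (without transcendence defect), with value group $\Gamma_{\nu_j}\simeq\Z^j$ obtained as a quotient by the isolated subgroup $\{0\}^j\times\Z^{r-j}\subset\Z^r$. Pick $f_1,\dots,f_r\in\mathfrak{o}_\nu$ with $\nu(f_j)=e_j$, the standard lex basis; such elements exist because the value group is generated by positive values of nonzero elements of $\mathfrak{o}_\nu$. Apply Knaf--Kuhlmann with $\mathcal{F}=\{f_1,\dots,f_r\}$ to obtain a projective model $X_0$ on which $\mathfrak{c}=\mathfrak{c}_r$ is generically smooth and comes with a regular parameter system $(a_1,\dots,a_r)$ of $\mathcal{O}_{X_0,\mathfrak{c}}$ with each $f_j=u_j\prod_i a_i^{n_{ij}}$ for a unit $u_j$ and nonnegative integers $n_{ij}$. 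Taking $\nu$-values yields $e_j=\sum_i n_{ij}\nu(a_i)$, so the matrix $N=(n_{ij})$ is in $\mathrm{GL}_r(\Z)$ and $(\nu(a_i))$ is a $\Z$-basis of $\Z^r$.

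Next I would perform a monomial/toric modification to adjust the parameter system so that $\nu(a'_i)=e_{\sigma(i)}$ for a permutation $\sigma$. Formally, the monomial substitution $b_i=\prod_k a_k^{(N^{-1})_{ki}}$ would send the values to the standard basis but $N^{-1}$ may have negative entries; this is corrected by a suitable toric blow-up of coordinate subvarieties inside $\mathfrak{c}$ (equivalently, by re-applying Knaf--Kuhlmann with $\mathcal{F}$ enlarged by auxiliary monomials encoding the sign-changes), yielding a projective model $X\to X_0$, birational to $X_0$, whose parameter system at $\mathfrak{c}_r$ has standard-basis $\nu$-values. The effect on the value group is purely linear-algebraic, so the toric construction can be read off directly from $N$.

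Finally, once $\nu(a'_i)=e_{\sigma(i)}$, the element $a'_i$ is a $\nu_j$-unit if and only if $\sigma(i)>j$ (its first nonzero coordinate lies past the $j$-th); hence the prime $\mathfrak{p}_j\subset\mathcal{O}_{X,\mathfrak{c}_r}$ defining $\mathfrak{c}_j$ is generated by $\{a'_i:\sigma(i)\le j\}$, a subset of a regular parameter system of size $j$. This exhibits $\mathfrak{c}_j$ as smooth of codimension $j$ at the generic point of $\mathfrak{c}_r$, which is exactly the conclusion. The main technical obstacle is the toric refinement step: although the value-group computation is transparent, one must verify that the required $\Z$-linear change of variables can be realized globally by a birational modification of a projective model and that the resulting charts still contain $\mathfrak{c}_r$ with the transformed parameters regular; this compatibility is essentially a compactness statement for fans and is where one leans hardest on the monomialization theorem of \cite{knaf-ann}.
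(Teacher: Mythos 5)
Your opening move --- apply Knaf--Kuhlmann to $\mathcal{F}=\{f_1,\dots,f_r\}$ with $\nu(f_j)=e_j$ --- is the right one and is what the paper means by stating the uniformization theorem ``for any $\mathcal F$'' and then saying ``in particular.'' The trouble is the second half. The toric re-alignment step you add is both mis-set-up and unnecessary, and the difficulty you honestly flag there (realizing the monomial change of variables globally by a projective modification) is a genuine gap in your argument as written.

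On the formula: from $f_j=u_j\prod_i a_i^{n_{ij}}$ and $\nu(f_j)=e_j$, writing $V$ for the matrix with columns $v_i=\nu(a_i)$, one gets $VN=I$, so $V=N^{-1}$; a monomial $b_i=\prod_k a_k^{m_{ki}}$ has $\nu(b_i)=e_i$ iff $M=N$, not $N^{-1}$. Since $N\ge 0$ there is no sign problem, but the resulting $b_i$ lie in $\mathfrak m^{\ge 2}$ generically and are not regular parameters, so you would still need a blow-up --- the opposite of the obstruction you describe.

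More importantly, the re-alignment is not needed at all. The Knaf--Kuhlmann output is automatically compatible with the flag of coarsenings once $\mathcal F$ is chosen as you did, by an elementary positivity argument that your write-up misses. Let $\phi(i)$ be the position of the first nonzero coordinate of $v_i$; since $a_i\in\mathfrak m$, this coordinate is positive. Comparing coordinates in $e_j=\sum_i n_{ij}v_i$ with $n_{ij}\ge 0$: the first nonzero coordinate of the right-hand side sits at $\min\{\phi(i):n_{ij}>0\}$ (all contributions there are strictly positive), forcing this minimum to equal $j$; the $j$-th coordinate then gives $\sum_{\phi(i)=j}n_{ij}(v_i)_j=1$, so there is a unique $i_0(j)$ with $\phi(i_0(j))=j$ and $n_{i_0(j),j}>0$. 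The assignment $j\mapsto i_0(j)$ is injective and a right inverse of $\phi$, so $\phi$ is a bijection. Hence $\mathfrak q_j:=(\{a_i:\phi(i)\le j\})$ is generated by a $j$-element subset of a regular parameter system; a short check (using that $\nu$ restricted to $\mathcal O_{X,\mathfrak c_r}$ is the monomial valuation in the $a_i$, and that $v_i\in\{0\}^j\times\Z^{r-j}$ iff $\phi(i)>j$) identifies $\mathfrak q_j$ with $\mathfrak m_{\nu_j}\cap\mathcal O_{X,\mathfrak c_r}$, the local defining ideal of $\mathfrak c_j$. This gives codimension $j$ and regularity at the generic point of $\mathfrak c_r$ directly on the Knaf--Kuhlmann model, with no further modification. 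So your proof is sound in conception but over-engineered; the piece you call the ``main technical obstacle'' is exactly the piece that can be deleted.
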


The following statement has been proved in 
\cite[Theorem 1.2 and Corollary 1.3]{knaf-adv}.

\begin{prop}
\label{prop:uniformize}
Let $K$ be a function field of an algebraic variety over 
$k=\bar{\F}_p$,  $\nu\in \Val_K$, and 
$\mathcal F=\{ f_1,\ldots,f_m\} \subset \mathfrak o_{\nu}$ a finite set of functions. 
Then there exists a finite 
separable Galois extension $\tilde{K}/K$, 
an extension of $\nu$ to a valuation $\tilde{\nu}\in \Val_{\tilde{K}}$, 
and a projective model $\tilde{X}$ of $\tilde{K}$ 
such that 
\begin{itemize}
\item[(1)] the extension of residue fields $\tilde{\KK}_{\tilde{\nu}}/ \tilde{\KK}_{\tilde{\nu}}$ is purely inseparable;
\item[(2)] $\Gamma_{\tilde{\nu}}/\Gamma_{\nu}$ is a finite $p$-group;
\item[(3)] $\tilde{\nu}$ admits a smooth monomial uniformization on 
$\tilde{X}$ with respect to $\mathcal F$. 
\end{itemize}
\end{prop}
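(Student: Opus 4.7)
The plan is to reduce the statement to the Abhyankar case handled by Proposition \ref{prop:knaf--ann}, by first finding a finite separable Galois extension $\tilde{K}/K$ that trivializes the \emph{defect} of $\nu$. The structural reason this works is that in characteristic $p$, the only obstruction to local uniformization of a valuation on a function field is its defect, which is always a $p$-power, and it can be trivialized by a suitable wildly ramified separable cover.

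The first step is defect elimination. I would fix an extension of $\nu$ to an algebraic closure $\bar{K}$ and consider the henselization $K^{\mathrm{h}}$ inside $\bar{K}$. By Kuhlmann's henselian rationality theorem together with the structure theory of Artin--Schreier defect extensions, there is a finite separable Galois subextension $\tilde{K}/K$ of $K^{\mathrm{sep}}$ such that the induced extension $\tilde{\nu}$ of $\nu$ is defectless. A defectless valuation on a function field is Abhyankar, so equality holds in \eqref{eqn:fund} for $\tilde{\nu}$. Moreover, the way this extension is built automatically yields properties (1) and (2): any residual separability could be absorbed into a further separable step and so is already trivial after passing to $\tilde{K}$, forcing $\tilde{\mathbb{K}}_{\tilde{\nu}}/\mathbb{K}_\nu$ to be purely inseparable, and the defect being a $p$-power forces $\Gamma_{\tilde{\nu}}/\Gamma_\nu$ to be a finite $p$-group.

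The second step is smooth monomial uniformization. Since $\tilde{\nu}$ is now Abhyankar, I would apply Knaf's theorem \cite[Theorem 1.1]{knaf-ann} (the engine behind Proposition \ref{prop:knaf--ann}) to the data $(\tilde{K}, \tilde{\nu}, \mathcal{F})$, viewing $\mathcal{F} \subset \mathfrak{o}_\nu \subset \mathfrak{o}_{\tilde{\nu}}$. This produces a projective model $\tilde{X}$ of $\tilde{K}$ whose center carries a regular parameter system $(a_1,\ldots,a_r)$ making each $f_j$ an $\mathcal{O}_{\tilde{X},\mathfrak{c}}$-monomial in the $a_i$, establishing (3). If $\tilde{K}/K$ is not yet Galois one passes to its Galois closure, which keeps $\tilde{\nu}$ defectless and only further refines the model.

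The main obstacle is the first step. In characteristic zero every valuation is defectless and one can take $\tilde{K}=K$: the result is essentially classical Zariski--Abhyankar local uniformization. In characteristic $p$, by contrast, a single valuation can carry arbitrarily complicated nested Artin--Schreier defect, and killing it genuinely requires the deep Knaf--Kuhlmann analysis of \cite[Theorem 1.2]{knaf-adv}, which I would invoke as a black box rather than reprove.
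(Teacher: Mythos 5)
The paper's own proof of this proposition is a direct citation to Knaf--Kuhlmann, \cite[Theorem 1.2 and Corollary 1.3]{knaf-adv}, with no further argument; your proposal ultimately invokes the same theorem as a black box, so the two coincide in substance.

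One correction to your sketch of the internal mechanics: the inference ``a defectless valuation on a function field is Abhyankar'' is false. In characteristic zero every valued field is defectless (Ostrowski), yet non-Abhyankar valuations on $k(x,y)$ abound --- for instance, rank-one valuations with value group $\Z$ and trivial residue extension, given by order of vanishing along a transcendental formal arc --- and the analogous phenomenon persists over $k=\bar{\F}_p$. Eliminating defect therefore does not reduce the problem to the Abhyankar case covered by \cite[Theorem 1.1]{knaf-ann}; the Knaf--Kuhlmann theorem you cite must and does handle arbitrary (including non-Abhyankar) valuations directly. Since you invoke \cite[Theorem 1.2]{knaf-adv} as a black box in any case, this misstatement does not compromise the conclusion, but it does misrepresent the structure of the cited result, and the ``Abhyankar reduction'' step of your outline could not actually be carried out as written.
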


\begin{prop}
\label{prop:main}
Let $K$ be a function field of an algebraic variety over 
$k=\bar{\F}_p$ and $\I^a_{\nu}, \D^a_{\nu} \subset \G_K^a$ the
abelianized inertia, resp. decomposition subgroup
of some valuation $\nu\in \Val_K$. 
Let 
$$
\gamma_K \,:\, \G^a_K\ra G^a
$$
be a continuous surjective homomorphism onto a finite abelian $\ell$-group $G^a$.
Assume that $\gamma_K(\I^a_{\nu})$ is nontrivial.  
Then for any primitive cyclic subgroup 
$I\subset \gamma_K(\I^a_{\nu})$ 
there exists a divisorial valuation $\nu'\in \DVal_K$
such that 
$$
I\subseteq \gamma_K(\I^a_{\nu'}) \quad \text{ and }\quad 
\gamma_K(\D^a_{\nu}) \subseteq \gamma_K(\D^a_{\nu'}).
$$
\end{prop}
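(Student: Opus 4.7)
The plan is to interpret $\gamma_K$ as a finite abelian $\ell$-Galois extension $L/K$, use the smooth monomial uniformization of Proposition~\ref{prop:uniformize} to realize $\nu$ by toric data on a model, and then single out a coordinate direction that captures the cyclic subgroup $I$.

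First, I would let $L/K$ be the finite abelian $\ell$-extension with $\mathrm{Gal}(L/K)=G^a$ corresponding to $\gamma_K$. Since $k=\bar{\F}_p$ contains all $\ell$-power roots of unity and $\ell\neq p$, this extension is tame Kummer: $L=K(u_1^{1/n_1},\ldots,u_m^{1/n_m})$ for some $u_i\in K^\times$. Fix a prolongation $\nu_L$ of $\nu$ to $L$; then under the identification $G^a=\mathrm{Gal}(L/K)$ one has $\gamma_K(\I^a_\nu)=I(\nu_L|\nu)$ and $\gamma_K(\D^a_\nu)=D(\nu_L|\nu)$. The task becomes to produce a divisorial valuation $\nu'$ of $K$ and a prolongation $\nu'_L$ to $L$ with $I\subseteq I(\nu'_L|\nu')$ and $D(\nu_L|\nu)\subseteq D(\nu'_L|\nu')$.

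Next, I would apply Proposition~\ref{prop:uniformize} to $(K,\nu)$ with $\mathcal{F}=\{u_1,\ldots,u_m\}\subset\mathfrak{o}_\nu$ (replacing any $u_i$ by $u_i^{-1}$ if necessary so as to have nonnegative values). This yields a finite separable Galois extension $\tilde K/K$ of $p$-power degree, a prolongation $\tilde\nu$ of $\nu$, and a projective model $\tilde X$ of $\tilde K$ on which $\tilde\nu$ admits a smooth monomial uniformization with respect to $\mathcal F$: at the smooth center $\tilde\mc\subset\tilde X$ there are regular parameters $(a_1,\ldots,a_r)$ such that each $u_i$ equals a unit times $a_1^{e_{i1}}\cdots a_r^{e_{ir}}$. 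Since $L/K$ is an $\ell$-extension and $[\tilde K:K]$ is prime to $\ell$, $L$ and $\tilde K$ are linearly disjoint over $K$; pull-back therefore identifies the pro-$\ell$ inertia and decomposition data of $\nu_L/\nu$ with those of a prolongation $\tilde\nu_L$ of $\tilde\nu$ to $L\tilde K/\tilde K$. I work on $\tilde X$ from here on.

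Tame Kummer theory identifies $I(\tilde\nu_L|\tilde\nu)$ with the subgroup of $G^a=\prod_i\mu_{n_i}$ dual, via the Kummer pairing, to the subgroup of $\Gamma_{\tilde\nu}\otimes\Q/\Gamma_{\tilde\nu}$ generated by the classes $\tfrac{1}{n_i}\tilde\nu(u_i)=\sum_j\tfrac{e_{ij}}{n_i}\tilde\nu(a_j)$. The primitive cyclic subgroup $I$ is then carried by a single primitive integer combination $\xi=\sum_jm_j\tilde\nu(a_j)$. By a toric modification of $\tilde X$ centered at $\tilde\mc$, I would introduce a new regular parameter $a'_1=\prod_j a_j^{m_j}$ (completed to a regular parameter system $(a'_1,\ldots,a'_r)$ at the updated center), keeping each $u_i$ in monomial form. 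The divisorial valuation $\tilde\nu'$ along $\{a'_1=0\}$ on this modified model then satisfies $I\subseteq I(\tilde\nu'_L|\tilde\nu')$ by construction. Moreover the center of $\tilde\nu$ lies in $\{a'_1=0\}$, so $\tilde\nu$ refines $\tilde\nu'$ in the valuation-theoretic sense $\mathfrak{o}_{\tilde\nu}\subset\mathfrak{o}_{\tilde\nu'}$, and the standard inclusions for iterated valuations give $D(\tilde\nu_L|\tilde\nu)\subseteq D(\tilde\nu'_L|\tilde\nu')$. Finally, $\nu':=\tilde\nu'|_K$ is divisorial on $K$, and the required containments transport back through the identifications above.

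The main obstacle I expect is the toric step: realizing the primitive cyclic subgroup $I$ of the tame Kummer inertia of a monomial valuation as the inertia at a single divisor. This amounts to completing a primitive integer vector in the character lattice to a regular parameter system and performing a compatible toric modification of $\tilde X$ that preserves the monomial form of the Kummer generators $u_i$. A secondary subtlety is the descent from $\tilde K$ back to $K$, where one must check that the restriction of $\tilde\nu'$ remains divisorial and that the pro-$\ell$ inertia and decomposition inclusions survive, using $\gcd([\tilde K:K],\ell)=1$.
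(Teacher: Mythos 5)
Your plan runs on the same rails as the paper's own proof: interpret $\gamma_K$ through Kummer theory as a finite set $\mathcal F\subset K^\times$, invoke Proposition~\ref{prop:uniformize} to pass to a model where $\nu$ (after a prime-to-$\ell$ base change) is a monomial Abhyankar valuation with smooth center, observe from properties (1) and (2) that the images of inertia and decomposition in $G^a$ are unchanged, and then do toric geometry with the regular parameter system $(a_1,\dots,a_r)$. The one place where the two arguments genuinely diverge is the last step. The paper records the algebraic flag valuation $\nu'=(\nu_1',\dots,\nu_r')$ determined by the parameter system and then, if the first divisorial level already has nontrivial inertia image, stops; otherwise it blows up the generic point of the center of the first nontrivial level and reads off the exceptional divisor $E$. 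You instead perform a single weighted toric modification, introducing $a_1'=\prod_j a_j^{m_j}$ with $(m_j)$ chosen so that the resulting divisor $\{a_1'=0\}$ has inertia image exactly the prescribed primitive cyclic $I$. Your version is actually the more careful one on this point: the statement quantifies over an arbitrary primitive cyclic $I\subset\gamma_K(\I^a_\nu)$, and the paper's blow-up as written produces \emph{some} divisor with nontrivial inertia image rather than tracking $I$ explicitly --- your choice of $(m_j)$ is precisely the ingredient that makes the dependence on $I$ visible. Both get the decomposition-group inclusion from the same fact that the new divisor passes through the center of $\nu$.

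One small caution on a side point: you assert that $\tilde K/K$ has $p$-power degree and use linear disjointness with $L$ to transfer the inertia and decomposition data. Proposition~\ref{prop:uniformize} does not literally state that $[\tilde K:K]$ is a $p$-power; it only gives the two local conditions that the residue extension is purely inseparable and $\Gamma_{\tilde\nu}/\Gamma_\nu$ is a finite $p$-group. The paper's own argument avoids linear disjointness and deduces directly from (1) and (2) that the images in $G^a$ of $\I^a_{\tilde\nu}$ and $\D^a_{\tilde\nu}$ agree with those of $\I^a_\nu$ and $\D^a_\nu$, since $\ell\neq p$. It is cleaner to phrase your reduction that way; otherwise you would need to justify separately that the chosen prolongation of $\nu$ lives inside a $p$-power subextension.
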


\begin{proof}
Assume that $\ell^n$ annihilates 
$G^a = \oplus_j \Z/\ell^{n_j}$, i.e., $n_j\le n$.  
By Kummer theory, the homomorphism 
$$
\gamma_K\in \Hom(\G^a_K/\ell^n, G^a)= \oplus_j \Hom(\G^a_K, \Z/\ell^{n_j}) = \oplus_j 
K^\times / (K^{\times})^{\ell^{n_j}}
$$
is defined by a finite set of elements $\mathcal F=\{ f_j\}$, with 
$f_j\in K^{\times} / (K^{\times})^{\ell^{n_j}}$. 
We lift these to a set of elements of $K^\times$, 
denoted by the same letter, and we may assume that these are in 
$\mathfrak o_{\nu}$.     
We apply Proposition~\ref{prop:uniformize} and pass 
to a finite separable Galois extension $\tilde{K}$ of $K$ 
over which $\tilde{\nu}$, the extension of $\nu$, admits a 
smooth uniformization on a projective model 
$\tilde{X}$ of $\tilde{K}$ with respect to $\mathcal F$.
Properties  (1) and (2) in Proposition~\ref{prop:uniformize} 
insure that the image in $G^a$  
of the corresponding inertia and decomposition group is unchanged, i.e., 
we have a diagram

\

\centerline{
\xymatrix{
\I^a_{\tilde{\nu}} \ar@{=}[d] & \!\!\!\!\subseteq & \D^a_{\tilde{\nu}}\ar@{=}[d]& \!\!\!\! \subset & \G^a_{\tilde{K}} \ar[d] &  \\
\I^a_{\nu}                    & \!\!\!\!\subseteq & \D^a_{\nu}                   & \!\!\!\!\subset &  \G^a_{K} \ar@{>>}[r] & G^a  
}
}

\

\noindent
Thus we may assume that $K=\tilde{K}$. 
Let $\mathfrak c = \mathfrak c(\nu)$ be the center of the (the lift of) $\nu$ on $X$. 
The regular parameter system $(a_1,\ldots, a_r)$ of $\mathcal O_{X,\mathfrak c}$ 
defines an algebraic valuation $\nu'=(\nu_1',\ldots, \nu'_r)$ 
of rank $r$ of $K$ and 
$$
\rho(\I^a_{\nu})= \rho(\I^a_{\nu'})\subseteq \rho(\D^a_{\nu}) = \rho(\D^a_{\nu'}).
$$
We have reduced the proof to the case of algebraic valuations, 
where it is straightforward: 
If $\rho (\I^a_{\nu_1'})$ is nontrivial in $G^a$ 
then we are done since
$$
\rho(\I^a_{\nu_1'})\subset \rho (\I^a_{\nu'})= 
\rho (\I^a_{\nu_1'}) \subset \rho (\D^a_{\nu'}).
$$
Otherwise, we can assume that $\rho(\I^a_{\nu_1'})$ and 
$\rho (\I^a_{\nu_{j}'}) = 0, j>i$ but $\rho(\I^a_{\nu_{i}'})\neq 0$.
We blow up the general point of the center of $\nu_{i}'$ and let $E$
be the exceptional divisor, it is generically smooth  by  Proposition~\ref{prop:knaf--ann}.
Then we have
$$
0\neq \rho(\I^a_E) 
\subset \rho(\I^a_{\nu'}/\I^a_{\nu_{i+1}'})\subset \rho(\D^a_{\nu'})/\rho(\I^a_{\nu_{i+1}'})
\subset \rho(\D^a_E) 
$$ 
and $\rho(\I^a_E)$ coincides with $\rho(\I^a_{\nu_i'}/\I^a_{\nu_{i+1}'})$,
which is nontrivial by assumption.
\end{proof}

\begin{rema}
The proof is essentially an application 
of the general result that algebraic valuations 
(or Abhyankar valuations) are dense in the patch-topology on $\Val_K$. 
\end{rema}

\section{Liftable subgroups and their configurations}
\label{sect:liftable}

Let $K=k(X)$ be the function field of an algebraic variety over
$k=\bar{\F}_p$. In this section, we compare
the structure of the fan $\Sigma(\G^c_K)$ with fans 
in its finite quotients. 
Consider the canonical central extension
\begin{equation}
\label{eqn:ell-central}
1\ra \mathcal Z_K\ra \G^c_K\ra \G^a_K\ra 1.
\end{equation}

\begin{lemm}
\label{lemm:surj}
We have
$$
\mathcal Z_K=[\G^c_K,\G^c_K].
$$
\end{lemm}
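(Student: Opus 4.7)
The plan is to unwind the definitions of $\G^a_K$, $\G^c_K$, and $\mathcal Z_K$ as topological quotients of $\G_K$, at which point the asserted equality reduces to bookkeeping. First I would recall that $\G_K$ denotes the maximal pro-$\ell$ quotient of the absolute Galois group $G_K$, and that, working consistently in the topological category (so that every commutator subgroup is replaced by its closure), we have $\G^a_K=\G_K/\overline{[\G_K,\G_K]}$ as the abelianization and $\G^c_K=\G_K/\overline{[\overline{[\G_K,\G_K]},\G_K]}$ as the maximal pro-$\ell$ quotient of nilpotency class at most two.

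Next I would identify $\mathcal Z_K$ explicitly. By its definition as the kernel of the canonical surjection $\G^c_K\twoheadrightarrow \G^a_K$ in \eqref{eqn:ell-central}, one finds
\[
\mathcal Z_K\;=\;\overline{[\G_K,\G_K]}\,\Big/\,\overline{[\overline{[\G_K,\G_K]},\G_K]}.
\]
On the other hand, the closed commutator subgroup $[\G^c_K,\G^c_K]$ equals the image of $\overline{[\G_K,\G_K]}$ under the projection $\G_K\twoheadrightarrow \G^c_K$, which is exactly the quotient displayed above; this already gives the inclusion $[\G^c_K,\G^c_K]\subseteq \mathcal Z_K$. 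For the reverse inclusion I would note that since $\G^c_K$ is nilpotent of class at most two, its quotient by $\overline{[\G^c_K,\G^c_K]}$ is abelian and receives a surjection from $\G^a_K$, forcing $\mathcal Z_K\subseteq[\G^c_K,\G^c_K]$ by the universal property of the abelianization.

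The only delicate point is the handling of topological closures for a pro-$\ell$ group that need not be finitely generated. This is not a genuine obstacle: $\mathcal Z_K$ is closed, being the kernel of a continuous map to a Hausdorff group, and the image of the compact set $\overline{[\G_K,\G_K]}$ under the continuous projection $\G_K\to\G^c_K$ is automatically closed. So the set-theoretic computation above respects closures throughout, and the identification $\mathcal Z_K=[\G^c_K,\G^c_K]$ follows. The substance of the lemma, in practice, is to record this identification for use in the next sections, where the commutator pairing $\wedge^2\G^a_K\to\mathcal Z_K$ will be exploited.
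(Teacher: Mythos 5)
Your argument is correct, but it takes a route quite different from the paper's, and in a sense it reveals that the lemma, read literally as the identification of $\mathcal Z_K$ with the \emph{closed} commutator subgroup of $\G^c_K$, is a near-tautology given the definitions of Section~\ref{sect:general}: since $\G^c_K=\G_K/\overline{[\overline{[\G_K,\G_K]},\G_K]}$ is the quotient of $\G_K$ by a closed subgroup contained in $\overline{[\G_K,\G_K]}$, it has the same topological abelianization $\G^a_K$ as $\G_K$, and the kernel of $\G^c_K\twoheadrightarrow\G^a_K$ is by construction simultaneously $\mathcal Z_K$ and the closed commutator subgroup. Your handling of the topological caveat is also sound: continuous images of compact subgroups are closed, so image-of-closure equals closure-of-image at every step, and this level of generality is exactly what is used later in Lemma~\ref{lemm:canonical} (where one only needs that $\mathcal Z_K$ is topologically generated by commutators, so that its image in any finite quotient is literally generated by commutators).

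The paper's own proof, however, goes in a different direction. It first treats function fields of curves, using freeness of the pro-$\ell$ Galois group there, and then embeds $\G^a_K\hookrightarrow\prod_E\G^a_E$ over one-dimensional subfields $E\subset K$, observing that the \emph{center} of $\G^c_K$ dies under each projection $\G^c_K\to\G^a_E$. This is designed to establish the genuinely nontrivial assertion that the full center of $\G^c_K$ is no larger than $\mathcal Z_K$; a priori a class-two nilpotent pro-$\ell$ group can have central elements not lying in the kernel of the map to its abelianization, and it is the embedding into the curve quotients that rules this out for $\G^c_K$. Your formal computation does not touch that point: it identifies $\mathcal Z_K$ with the commutator subgroup (which is always contained in the center) rather than with the center. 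So for the statement exactly as printed your proof is adequate and simpler, but it should be kept in mind that the authors' argument is carrying additional content about $Z(\G^c_K)$ that a purely definitional unwinding cannot recover.
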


\begin{proof}
This holds for function fields of curves since the corresponding 
pro-$\ell$-quotients of their absolute Galois groups are free. 
In higher dimensions,  $\G^a_K$ embedds
into the product $ \prod_E \G^a_E$, where $E$ 
ranges over function fields of curves $E\subset K$.
Under the projection to $\G^c_K\ra \G^a_E$, the center of $\G^c_K$ maps
to zero, hence the claim.
\end{proof}

\begin{lemm}
\label{lemm:canonical}
Consider commutative diagrams of continuous homomorphisms

\centerline{ 
\xymatrix{
1 \ar[r]&  \mathcal Z_K\ar[r] \ar@{>>}[d]& \G^c_K \ar[r] \ar@{>>}[d]^{\gamma^c_K} & \G^a_K \ar@{>>}[d]^{\gamma_K} \ar[r] & 1\\ 
           1\ar[r] & Z \ar[r] &   G^c    \ar[r] &  G^a \ar[r] & 1,
}
}

\noindent
where $G^c$ is finite, with  fixed surjective $\gamma_K$. 
Assume that $Z$ is the maximal quotient of $\cZ_K$  
giving rise to such a diagram. Then $G^c$ is unique, modulo
isoclinism.
\end{lemm}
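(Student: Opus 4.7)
\emph{Plan.} The strategy is to parametrize finite central extensions of $G^a$ by $Z$ via their classes in $\rH^2(G^a,Z)$, and to reduce the uniqueness-up-to-isoclinism claim to the injectivity of a pullback map in cohomology with divisible coefficients. That injectivity is in turn handled by the wedge-square description of $\rH^2$ of abelian groups.

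Suppose $G^c_{\alpha}$ and $G^c_{\tilde\alpha}$ are two completions of the diagram with the same maximal $Z$, corresponding to classes $\alpha,\tilde\alpha\in\rH^2(G^a,Z)$. Naturality of the extension class forces $\gamma_K^*(\alpha)=\gamma_K^*(\tilde\alpha)$ in $\rH^2(\G^a_K,Z)$: both must equal the image of the canonical class of $\G^c_K$ in $\rH^2(\G^a_K,\cZ_K)$ under the pushforward $\cZ_K\twoheadrightarrow Z$. Hence $\alpha-\tilde\alpha\in\ker\gamma_K^*$. The maximality of $Z$ enters here only to ensure that $Z$ is canonically determined as a quotient of $\cZ_K$, so that the two classes $\alpha,\tilde\alpha$ live in the same $\rH^2$-group and the isoclinism comparison is well-posed.

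By the definition of isoclinism recalled at the beginning of Section~\ref{sect:cent-ext}, showing that $\alpha$ and $\tilde\alpha$ are isoclinic is equivalent to showing that the image of $\alpha-\tilde\alpha$ vanishes in $\rH^2(G^a,(\Q/\Z)^r)$ under some (hence any) embedding $Z\hookrightarrow(\Q/\Z)^r$. By functoriality with respect to $\gamma_K$, this image pulls back to zero in $\rH^2(\G^a_K,(\Q/\Z)^r)$, so it suffices to prove that the pullback
$$
\gamma_K^*\colon\rH^2(G^a,(\Q/\Z)^r)\lra\rH^2(\G^a_K,(\Q/\Z)^r)
$$
is injective.

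For this last step, invoke the universal coefficient theorem: since $(\Q/\Z)^r$ is divisible, hence injective, one has $\rH^2(G^a,(\Q/\Z)^r)\simeq\wedge^2\Hom(G^a,(\Q/\Z)^r)$ for finite abelian $G^a$, and the analogous identification holds in continuous cohomology for the pro-$\ell$ abelian group $\G^a_K$. The surjection $\gamma_K$ dualizes to an injection of character groups whose cokernel, again divisible, splits the dual sequence; wedging a split injection yields an injection, completing the argument. The main technical obstacle is precisely this final wedge-square computation: one must verify that the standard finite-group identification of $\rH^2$ with the exterior square of the Pontryagin dual carries over cleanly to the continuous cohomology of the infinite-rank free pro-$\ell$ abelian group $\G^a_K$, and that the split exactness of the dual sequence of character groups remains true in this setting. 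Everything else is formal functoriality of central extensions.
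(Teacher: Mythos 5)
Your approach is genuinely different from the paper's. The paper argues group-theoretically: given two candidate extensions $G_1^c, G_2^c$ of $G^a$, it forms the fibre product $G := G_1^c \times_{G^a} G_2^c$, uses Lemma~\ref{lemm:surj} (that $\cZ_K = [\G^c_K, \G^c_K]$) to identify the commutator subgroup of the image of $\G^c_K$ in $G$, and then invokes maximality of $Z$ to force the two projections of that commutator subgroup onto $Z_1, Z_2$ to be isomorphisms, which yields isoclinism. You instead recast everything in $\rH^2$ and reduce to injectivity of $\gamma_K^* : \rH^2(G^a,(\Q/\Z)^r) \to \rH^2(\G^a_K,(\Q/\Z)^r)$. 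This is conceptually appealing since it isolates the content of maximality as a statement in degree-two cohomology with divisible coefficients, but it does not fully escape the fibre-product construction, as explained below.

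The first gap is the assertion that maximality of $Z$ ``ensures that $Z$ is canonically determined as a quotient of $\cZ_K$,'' on which you rely to conclude $\gamma_K^*(\alpha)=\gamma_K^*(\tilde\alpha)$. As stated this is not automatic: a priori the two completions of the diagram come with possibly distinct surjections $\phi_1, \phi_2 : \cZ_K \twoheadrightarrow Z$, and the two pushforwards of the class of $\G^c_K$ need not agree in $\rH^2(\G^a_K,Z)$. To close the gap one must argue that $\phi_1$ and $\phi_2$ differ at most by an automorphism of $Z$: the image $W$ of $(\phi_1,\phi_2) : \cZ_K \to Z\times Z$ is a quotient of $\cZ_K$ supporting a compatible central extension of $G^a$ (namely the image of $\G^c_K$ in $G_1^c\times_{G^a} G_2^c$), and since $W$ surjects onto each factor, maximality forces both projections $W\to Z$ to be isomorphisms. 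This is precisely the paper's fibre-product step, so your route only replaces the second half of the paper's argument.

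The second gap, which you flag yourself, is the wedge-square description of continuous $\rH^2$ of the infinite-rank pro-$\ell$ group $\G^a_K$ and the naturality of that identification. This can be avoided entirely. To see that $\gamma_K^*$ is injective on $\rH^2(G^a,(\Q/\Z)^r)$: if $\gamma_K^*(\beta)=0$ then there is a continuous lift $s : \G^a_K \to E_\beta$ of $\gamma_K$ to the extension group $E_\beta$. Since $\G^a_K$ is abelian, $s(\G^a_K)$ is an abelian subgroup, and since it surjects onto $G^a$ while $(\Q/\Z)^r$ is central, $s(\G^a_K)\cdot (\Q/\Z)^r = E_\beta$; hence $E_\beta$ is abelian. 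But abelian extensions of $G^a$ by the injective $\Z$-module $(\Q/\Z)^r$ are classified by $\Ext^1_\Z(G^a,(\Q/\Z)^r)=0$, so $\beta=0$. With these two repairs your cohomological route becomes a complete alternative to the paper's group-theoretic proof.
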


\begin{proof}
Assume that $G_1^c,G_2^c$ are two such extensions of 
$G^a$ with $Z_1,Z_2$, respectively, and put 
$G:=G_1^c\times_{G^a} G_2^c$. We have a natural surjection $G\ra G^a$ and an inclusion 
$Z_1\times Z_2\hookrightarrow G$. Moreover, $[G,G]\subseteq Z_1\times Z_2$.  
By Lemma~\ref{lemm:surj}, $\cZ_K$ is generated by commutators in $\G^c_K$, thus 
$Z_1\times Z_2$ is also generated by commutators in $G$. It follows that $G=G^c$. 
If both projections $[G^c, G^c]\to Z_1 , Z_2$ are isomorphisms
then $G_1^c$ and $G_2^c$ are isoclinic. Otherwise, we have a contradiction to
the maximality assumption. Since $G^a$ is finite, $G^c$ is also finite, as it is bounded
by $\wedge^2(G^a)^*$.
%{\bf check the proof!}
\end{proof}

We proceed to investigate the properties of {\em fans} under such 
factorizations. Let 
$$
\gamma_K:\G^a_K\ra G^a
$$ 
be a continuous surjective homomorphism onto a finite group. We choose a maximal 
finite central extension $G^c$  of $G^a$ as in Lemma~\ref{lemm:canonical}.

\begin{coro}
\label{coro:intermediate-factor}
Given continuous surjective homomorphisms
\begin{equation}
\label{eqn:factorr}
\G^a_K \stackrel{\tilde{\gamma}_K}{\lra} \tilde{G}^a\stackrel{\gamma}{\longrightarrow} G^a,
\end{equation}
with $\tilde{G}^a$ a finite group, there is a unique (modulo isoclinism of lower rows) 
diagram of central extensions 

\

\centerline{
\xymatrix{ 
1\ar[r] & \cZ_K \ar[r] \ar[d]& \G^c_K \ar[d]^{\tilde{\gamma}^c_K} \ar[r] & \G^a_K \ar[d]^{\tilde{\gamma}_K} \ar[r]          & 1 \\
1\ar[r] & \tilde{Z} \ar[r] \ar[d]& \tilde{G}^c \ar[d]^{\gamma^c} \ar[r] & \tilde{G}^a \ar[d]^{\gamma} \ar[r]          & 1 \\
1\ar[r] & Z\ar[r]& G^c       \ar[r]     & G^a \ar[r] & 1 
}
}

\noindent 
with surjective $\tilde{\gamma}^c_K$, $\gamma^c$ and maximal $\tilde{Z}$, $Z$. 
\end{coro}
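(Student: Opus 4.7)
The plan is to construct the bottom row as an explicit canonical quotient of the middle row, rather than invoking Lemma~\ref{lemm:canonical} twice independently. The construction splits into three steps, of which the last (verifying maximality of $Z$) is where the real work lies.

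First, I would apply Lemma~\ref{lemm:canonical} to $\tilde{\gamma}_K: \G^a_K \to \tilde{G}^a$ to produce the middle row: $\tilde{Z}$ maximal, $\tilde{G}^c$ unique up to isoclinism, and $\tilde{\gamma}^c_K: \G^c_K \twoheadrightarrow \tilde{G}^c$ surjective. Next, I would let $M \subset \tilde{G}^c$ denote the preimage of $\Ker(\gamma) \subset \tilde{G}^a$. Since $\tilde{G}^a$ is abelian, $[\tilde{G}^c, \tilde{G}^c] \subseteq \tilde{Z} \subseteq M$, so $[\tilde{G}^c, M] \subseteq M$, and I set
$$
G^c := \tilde{G}^c / [\tilde{G}^c, M], \qquad Z := M / [\tilde{G}^c, M].
$$
By construction $Z$ is central in $G^c$ and $G^c/Z = \tilde{G}^c/M = G^a$, so the bottom row is indeed a central extension. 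The composition $\G^c_K \to \tilde{G}^c \to G^c$ covers $\gamma_K$, and $\gamma^c: \tilde{G}^c \twoheadrightarrow G^c$ is surjective by construction.

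The main obstacle will be showing that $Z$ so defined is maximal, i.e., that $G^c$ agrees up to isoclinism with the extension produced by Lemma~\ref{lemm:canonical} applied to $\gamma_K$ directly. Given any central extension $1 \to Z' \to G' \to G^a \to 1$ admitting a surjection from $\G^c_K$ (with $Z'$ realized as a quotient of $\cZ_K$), I would form the fiber product $H := G' \times_{G^a} \tilde{G}^a$, which is a central extension of $\tilde{G}^a$ by $Z'$ carrying a natural map from $\G^c_K$; surjectivity of this map uses that $\cZ_K$ surjects onto $Z'$. The maximality of $\tilde{Z}$ from the first step then yields a surjection $\tilde{G}^c \twoheadrightarrow H$, and composing with the first projection gives $\tilde{G}^c \twoheadrightarrow G'$. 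Under this surjection, $M$ lands in the central kernel $Z'$ of $G'$, so $[\tilde{G}^c, M]$ dies and the map factors through $G^c$. Hence $G^c \twoheadrightarrow G'$ for every such $G'$, establishing maximality of $Z$. Uniqueness of both lower rows up to isoclinism then follows from the uniqueness clause of Lemma~\ref{lemm:canonical}.
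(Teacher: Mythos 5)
Your construction of the bottom row does not produce the extension that Lemma~\ref{lemm:canonical} attaches to $\gamma_K$, and the discrepancy is essential, not cosmetic. In Lemma~\ref{lemm:canonical} (and hence in the Corollary) the kernel $Z$ of $G^c\to G^a$ is required to be a \emph{quotient of $\cZ_K$} — the left vertical arrow $\cZ_K\to Z$ in the diagram is a surjection. Your $Z := M/[\tilde G^c, M]$ is instead the \emph{full} kernel of $\tilde G^c/[\tilde G^c, M] \to G^a$, which contains the image of $\tilde Z$ (equivalently of $\cZ_K$) as the subgroup $\tilde Z/[\tilde G^c, M]$, and this inclusion is proper whenever $\Ker(\gamma)\neq 0$, since $M/\tilde Z \cong \Ker(\gamma)$. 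So the composition $\cZ_K\to\tilde Z\to Z$ fails to be onto, and your bottom row is not an instance of the diagram in Lemma~\ref{lemm:canonical} at all.

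Concretely: take $\G^c_K$ to be a free pro-$\ell$ group on two generators $x,y$ modulo the third lower central term, so $\G^a_K\cong\Z_\ell^2$ and $\cZ_K=\langle[x,y]\rangle\cong\Z_\ell$. Let $\tilde G^a=(\Z/\ell)^2$ with $\tilde\gamma_K$ the mod-$\ell$ reduction, and let $\gamma:\tilde G^a\to G^a=\Z/\ell$ be projection onto the first factor. Then $\tilde G^c$ is the Heisenberg group of order $\ell^3$ and $\tilde Z=\Z/\ell=\langle c\rangle$, $c=[a,b]$. Here $M=\langle b,c\rangle$, $[\tilde G^c,M]=\langle c\rangle$, so your construction yields $G^c=(\Z/\ell)^2$, $Z=\Z/\ell$ (generated by $\bar b$), and $\cZ_K$ maps to $0$ in $Z$. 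But the extension produced by Lemma~\ref{lemm:canonical} applied to $\gamma_K:\G^a_K\to\Z/\ell$ is $G^c=G^a=\Z/\ell$ with $Z=0$ (a cyclic $G^a$ supports no nontrivial central extension with kernel a quotient of $\cZ_K$ and surjection from $\G^c_K$, since $[\G^c_K, P]=\cZ_K$ for $P=\Ker(\G^c_K\to\Z/\ell)$). These two bottom rows have different $Z$ and so are not isoclinic in the sense of Section~\ref{sect:cent-ext}. Your subsequent maximality argument inherits this problem: you show your $G^c$ surjects onto every $G'$ in the relevant class, but your $G^c$ itself lies \emph{outside} that class, so this does not identify it with the canonical extension. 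The repair is not just to quotient $\tilde G^c$ by $[\tilde G^c,M]$ but also to kill a lift of $\Ker(\gamma)$ inside $M/[\tilde G^c,M]$, chosen compatibly with $\G^c_K$ — equivalently, simply apply Lemma~\ref{lemm:canonical} to $\gamma_K$ directly and then verify (using that $\Ker(\G^c_K\to\tilde G^c)\subseteq\Ker(\G^c_K\to G^c)$, which holds modulo isoclinic replacement) that $\gamma^c$ exists.
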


\begin{proof}
Evident. 
\end{proof}

We will use the following observation:

\begin{lemm}
\label{lemm:group}
Let $\G^a$ be a profinite abelian group and 
$$
\G^a\stackrel{\gamma_j}{\lra} \tilde{G}^a_j, \quad j=1,\ldots, n,
$$ 
a collection of continuous surjective homomorphisms onto finite groups.
Then there exists a continuous surjection 
$$
\gamma:\G^a\ra \tilde{G}^a
$$ 
onto a finite group such that 
each $\gamma_j$ factors through $\gamma$:
$$
\gamma_j: \G^a\stackrel{\gamma}{\lra} \tilde{G}^a\ra \tilde{G}^a_j.
$$ 

\end{lemm}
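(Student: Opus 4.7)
The plan is to take the diagonal (product) map and cut down to its image. Concretely, I would form the continuous homomorphism
$$
\gamma' := (\gamma_1, \ldots, \gamma_n) \,:\, \G^a \lra \prod_{j=1}^n \tilde{G}^a_j,
$$
and let $\tilde{G}^a := \gamma'(\G^a)$, equipped with the subspace topology from the (discrete) finite group $\prod_j \tilde{G}^a_j$. Since the target product is finite, $\tilde{G}^a$ is automatically finite, and by construction the corestriction $\gamma: \G^a \to \tilde{G}^a$ is a continuous surjection.

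For the factorization, I would simply use the coordinate projections $\pi_j: \prod_i \tilde{G}^a_i \to \tilde{G}^a_j$ restricted to $\tilde{G}^a$. Each $\pi_j|_{\tilde{G}^a}$ is a group homomorphism, and its surjectivity onto $\tilde{G}^a_j$ follows from the surjectivity of the original $\gamma_j = \pi_j \circ \gamma'$. The identity $\gamma_j = (\pi_j|_{\tilde{G}^a}) \circ \gamma$ is immediate from the definition of $\gamma'$.

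There is no real obstacle here: the argument uses only that a finite product of finite groups is finite, together with the universal property of the product. One small point worth noting in the write-up is that one could equivalently phrase the construction as $\tilde{G}^a = \G^a / \bigcap_j \ker(\gamma_j)$, since this intersection is a finite-index closed subgroup of $\G^a$, and verifying that this quotient fits into the required factorization is then tautological.
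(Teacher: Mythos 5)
Your proof is correct and is essentially identical to the paper's: both take $\tilde{G}^a$ to be the image of $\G^a$ under the diagonal map into $\prod_j \tilde{G}^a_j$ and use the coordinate projections for the factorizations. The alternative phrasing via $\G^a/\bigcap_j \ker(\gamma_j)$ that you mention is the same construction in disguise.
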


\begin{proof}
We can choose $\tilde{G}^a$ to be the image of $\G^a$ in the direct product
$$
\tilde{G}_1\times \cdots \times \tilde{G}_n.
$$
\end{proof}

We are interested in factorizations \eqref{eqn:factorr}, 
with finite $\tilde{G}^a$, preserving 
liftable subgroups and their configurations. 
Throughout we will be working with the canonical, modulo isoclinism, 
diagram as in Corollary~\ref{coro:intermediate-factor}, i.e., a factorization 
as in Equation~\eqref{eqn:factorr}
will canonically determine $\Sigma(\tilde{G}^c)$ and the set of $\Delta$-pairs in $\tilde{G}^a$, 
by Lemma~\ref{lemm:iso-delta-pair}.
Let 
$$
\Sigma_E(G^c):=\{ \sigma\in \Sigma(G^c) \, |\, \sigma = \gamma_K(\sigma_K), \quad
\text{ for some } \quad \sigma_K\in \Sigma(\G^c_K)\} 
$$
be the subset of {\em extendable} subgroups.

\begin{lemm}
\label{lemm:compact}
Given a continuous surjective homomorphism
$$
\gamma_K:\G^a_K\ra G^a
$$
onto a finite abelian group there exists a factorization
$$
\G^a_K\stackrel{\tilde{\gamma}_K}{\lra} \tilde{G}^a\stackrel{\gamma}{\lra} G^a, \quad \gamma_K=\gamma\circ
\tilde{\gamma}_K,
$$
with finite $\tilde{G}^a$, such that for all $\sigma\in \Sigma(G^c)$ we have: 
if $\sigma$ is nonextendable then there is no 
$\tilde{\sigma}\in \Sigma(\tilde{G}^c)$ with $\gamma(\tilde{\sigma})=\sigma$.

Moreover, this holds for every finite $\bar{G}^a$ fitting into
$$
\G^a_K\ra \bar{G}^a\ra \tilde{G}^a.
$$
\end{lemm}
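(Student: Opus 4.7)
The plan is a profinite compactness argument, exploiting that $\Sigma(G^c)$ is finite and that $\G^c_K$ is the inverse limit of the canonical maximal central extensions attached to finite quotients of $\G^a_K$.

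Since $\Sigma(G^c)$ is finite, I would handle each nonextendable $\sigma \in \Sigma(G^c)\setminus \Sigma_E(G^c)$ separately, producing an intermediate $\tilde{G}^a_\sigma$ in which no preimage of $\sigma$ lies in $\Sigma(\tilde{G}^c_\sigma)$, and then combine these via Lemma~\ref{lemm:group} to obtain a single $\tilde{G}^a$ that works uniformly for all of them. Each $\tilde{G}^a$ is equipped throughout with its canonical maximal central extension $\tilde{G}^c$ from Corollary~\ref{coro:intermediate-factor}.

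Fix a nonextendable $\sigma$. For each intermediate finite quotient $\G^a_K \ra \tilde{G}^a \ra G^a$, set
$$
S(\tilde{G}^a) := \{\tilde{\sigma} \in \Sigma(\tilde{G}^c) \,:\, \gamma(\tilde{\sigma}) = \sigma\}.
$$
These are finite sets, and they assemble into an inverse system under the projections $\bar{G}^a \ra \tilde{G}^a$, using that the maximality of the canonical central extensions forces the image of a subgroup with abelian preimage in $\bar{G}^c$ to still have abelian preimage in $\tilde{G}^c$. Suppose, toward a contradiction, that every $S(\tilde{G}^a)$ is nonempty. By K\"{o}nig's lemma the inverse limit is nonempty; a compatible family $\{\tilde{\sigma}_{\tilde{G}^a}\}$ therefore exists, and its limit is a closed subgroup $\sigma_K \subseteq \G^a_K$ with $\gamma_K(\sigma_K) = \sigma$. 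This $\sigma_K$ is topologically noncyclic because it surjects onto the noncyclic $\sigma$, and its preimage in $\G^c_K$ is abelian: any commutator of lifts $x,y \in \pi_a^{-1}(\sigma_K)$ projects to the vanishing commutator inside the abelian preimage $\tilde{\pi}_a^{-1}(\tilde{\sigma}_{\tilde{G}^a})$ at every finite level, while $\G^c_K = \varprojlim \tilde{G}^c$. After modifying $\sigma_K$ to be primitive (see below), it witnesses the extendability of $\sigma$, a contradiction; hence some $\tilde{G}^a$ satisfies $S(\tilde{G}^a) = \emptyset$. The moreover clause falls out of the inverse system: for $\bar{G}^a \ra \tilde{G}^a$ any $\bar{\sigma} \in S(\bar{G}^a)$ projects to an element of $S(\tilde{G}^a)$, so emptiness at one level propagates to all finer levels.

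The hard part will be arranging the primitivity condition built into the definition of $\Sigma(\G^c_K)$. Naive saturation of $\sigma_K$ inside $\G^a_K$ can enlarge its image in $G^a$ strictly beyond $\sigma$, because elements of $\G^a_K$ have infinite order while $G^a$ is torsion, so an $\ell$-th root of an element of $\sigma_K$ need not map into $\sigma$. I would navigate this by lifting a chosen set of topological generators of $\sigma$ to infinite-order elements of $\G^a_K$ compatible with the family $\{\tilde{\sigma}_{\tilde{G}^a}\}$, using that $\G^a_K$ is a free $\Z_\ell$-module and that $\G^c_K$ is class-two nilpotent, so that the subgroup topologically generated by these lifts is primitive, has image exactly $\sigma$, and retains an abelian preimage in $\G^c_K$.
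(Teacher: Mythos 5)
Your proposal follows the same route as the paper's proof: handle one nonextendable $\sigma$ at a time via a profinite compactness argument on a cofiltered system of the finite sets $S(\tilde{G}^a)$, then combine the finitely many resulting quotients with Lemma~\ref{lemm:group}, with the ``moreover'' clause propagating along the system. The paper's argument is essentially identical, if terser: it writes $\G^a_K$ as a projective limit of finite quotients, observes that a compatible family of liftable $\sigma_\iota$ surjecting onto $\sigma$ would produce a closed liftable $\sigma_K$ surjecting onto $\sigma$, contradicting nonextendability, and then invokes Lemma~\ref{lemm:group}.

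The place where you go beyond the paper is the primitivity point, and you are right to flag it: the paper's phrase ``there exists a closed liftable $\sigma_K$ surjecting onto $\sigma$, contradicting nonextendability'' tacitly identifies ``closed noncyclic liftable subgroup surjecting onto $\sigma$'' with membership of a suitable $\sigma_K$ in $\Sigma(\G^c_K)$, which by definition demands primitivity; this passage is silently swept under the rug. However, your proposed fix does not work as stated. Lifting topological generators of $\sigma$ to primitive elements of the free $\Z_\ell$-module $\G^a_K$ does not guarantee that the closed subgroup they generate is primitive: for instance $\langle \ell e_1 + e_2,\, e_2\rangle = \Z_\ell(\ell e_1) \oplus \Z_\ell e_2 \subset \Z_\ell^2$ is generated by primitive elements but is not itself primitive. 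And passing to the saturation creates the opposite problem, since the image of the saturation in $G^a$ may strictly contain $\sigma$. Resolving this requires either structural input on $\cZ_K$ and the commutator pairing (e.g.\ from the cited anabelian references, where torsion-freeness of the relevant quotients and the valuation-theoretic description of $\Sigma(\G^c_K)$ live), or a mild reformulation of ``extendable'' to an inclusion rather than an equality of images. In short: same method, a legitimate observation about a step the paper elides, but the sketched repair would itself need to be replaced.
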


\begin{proof}
First we prove the statement for one  nonextendable $\sigma$. 
Write 
\begin{equation}
\label{eqn:iota}
\G^a_K=\projlim_{\iota\in I} G^a_{\iota}, \quad \quad \gamma_{\iota \iota'}:G^a_{\iota} \stackrel{}{\lra} G^a_{\iota'}, 
\quad \iota' \preceq \iota,
\end{equation}
where the limit is over finite continuous quotients of $\G^a_K$. 
Assume that for all $\iota$, 
there is some $\sigma_{\iota}\in \Sigma(G^c_{\iota})$ surjecting onto $\sigma$; 
this implies that there exist such $\sigma_{\iota'}$, for all $\iota'\preceq \iota$, with 
$\gamma_{\iota \iota'}(\sigma_{\iota})=\sigma_{\iota'}$. 

%{\bf is the following true?}

By compactness of $\G^a_K$, there exists 
a closed liftable $\sigma_K\subset \G^a_K$ 
surjecting onto $\sigma$. This contradicts our assumption that $\sigma$ is nonextendable. 
Thus there is a required factorization
$$
\G^a_K\ra \tilde{G}^a\stackrel{\gamma}{\lra} G^a.
$$

Let 
$$
\{ \sigma_1,\ldots, \sigma_n\} =\Sigma(G^c)\setminus \Sigma_E(G^c).
$$ 
For each $j$, let 
$$
\G^a_K\stackrel{}{\lra} \tilde{G}^a_j\stackrel{\gamma_j}{\lra} G^a
$$ 
be the factorization constructed above.
Now we apply Lemma~\ref{lemm:group}, combined with Corollary~\ref{coro:intermediate-factor},
and obtain factorizations of $\gamma_j$:
$$
\G^a_K\ra \tilde{G}^a \ra \tilde{G}_j\ra G^a, \quad \quad \gamma: \tilde{G}^a\ra G^a,
$$  
Assume that there is some $j$ for which there exists a 
$\tilde{\sigma}\in \Sigma(\tilde{G}^c)$ surjecting onto $\sigma_j$. 
Then image $\tilde{\sigma}$ in $\tilde{G}_j$ must be liftable, 
contradicting the construction in the first part.  
\end{proof}

Let
$$
I\subseteq D\subseteq G^a.
$$
be a $\Delta$-pair (see Definition~\ref{defn:delta}). Throughout, 
we assume that $G^a$ arises as a finite quotient of 
the Galois group of some function field $G_K$, 
in particular, the corresponding $G^c$ is determined as in Lemma~\ref{lemm:canonical}, up to isoclinism.   
We say that $(I,D)$ is {\em extendable} 
if there exists a valuation $\nu\in \Val_K$ and subgroups
$$
I^a\subseteq \I^a_{\nu}, \quad D^a\subseteq \D^a_{\nu}
$$
such that 
$$
\gamma_K(I^a)=I, \quad \gamma_K(D^a)=D.
$$
Recall that a $\Delta$-pair $(\tilde{I},\tilde{D})$ is said to surject 
onto $(I,D)$ if 
$$
\gamma(\tilde{I})=I, \quad \gamma(\tilde{D})=D.
$$

We will need the following strengthening of Lemma~\ref{lemm:compact} 

\begin{prop}
\label{prop:delta-pairs}
Given a continuous surjective homomorphism
$$
\G^a_K\ra G^a
$$
onto a finite abelian group there exists a factorization
$$
\G^a_K\ra \tilde{G}^a\ra G^a, 
$$
with finite $\tilde{G}^a$, such that for all $\Delta$-pairs $(I,D)$ in $G^a$ we have:
if $(I,D)$ nonextendable 
then there is no $\Delta$-pair $(\tilde{I}, \tilde{D})$ in $\tilde{G}^a$
surjecting onto $(I,D)$. 
\end{prop}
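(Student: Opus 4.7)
The plan is to mirror the proof of Lemma~\ref{lemm:compact}, first reducing to the case of a single non-extendable $\Delta$-pair and then combining a compactness argument with the valuation-theoretic results of Section~\ref{sect:valuation}. As in the second half of the proof of Lemma~\ref{lemm:compact}, it suffices to handle one non-extendable $\Delta$-pair at a time: for each of the (finitely many) non-extendable $\Delta$-pairs in $G^a$ one produces a factorization through a finite group, and then combines them into a single factorization via Lemma~\ref{lemm:group} and Corollary~\ref{coro:intermediate-factor}. The key observation for this reduction is that if $(\tilde{I},\tilde{D})$ surjects onto $(I,D)$ through an intermediate quotient $\bar{G}^a$, then any lift to a $\Delta$-pair in a finer quotient also surjects onto $(I,D)$, so it suffices to rule out $\Delta$-pair lifts one offending $(I,D)$ at a time.

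Fix a non-extendable $\Delta$-pair $(I,D)$ in $G^a$ and suppose, for contradiction, that for every finite factorization $\G^a_K \to \tilde{G}^a \to G^a$ there exists a $\Delta$-pair $(\tilde{I},\tilde{D})$ in $\tilde{G}^a$ surjecting onto $(I,D)$. Writing $\G^a_K$ as the inverse limit of its finite continuous quotients as in \eqref{eqn:iota}, let $X_\iota$ denote the (necessarily finite) set of $\Delta$-pairs in $G^a_\iota$ --- with respect to the canonical central extension $G^c_\iota$ furnished by Corollary~\ref{coro:intermediate-factor} --- that surject onto $(I,D)$. The $X_\iota$ form an inverse system of nonempty finite sets, whose inverse limit is therefore nonempty; an element yields closed subgroups $I_K \subseteq D_K \subseteq \G^a_K$ with $\gamma_K(I_K)=I$, $\gamma_K(D_K)=D$, and satisfying the ``infinite $\Delta$-pair'' conditions $I_K \in \bar{\Sigma}(\G^c_K)$ and $\langle I_K, d\rangle \in \bar{\Sigma}(\G^c_K)$ for every $d \in D_K$. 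These liftability conditions pass from the finite levels to $\G^c_K$ because the canonical central extension is itself an inverse limit of the $G^c_\iota$.

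The remaining task is to exhibit a single valuation $\nu \in \Val_K$ with $I_K$ close to $\I^a_\nu$ and $D_K \subseteq \D^a_\nu$, which contradicts the non-extendability of $(I,D)$. Theorem~\ref{theo:main-valu} applied to the liftable subgroup $I_K$ produces a valuation $\nu_0$ with $I_K \subseteq \D^a_{\nu_0}$ and $\I^a_{\nu_0} \subseteq I_K$ of $\Z_\ell$-corank at most one. For each $d \in D_K$, the same theorem applied to the liftable subgroup $\langle I_K, d\rangle$ produces a valuation $\nu_d$ with $\langle I_K, d\rangle \subseteq \D^a_{\nu_d}$. The plan is to show that all these valuations fit into a common flag, so that $d \in \D^a_{\nu_0}$ for every $d$ (possibly after replacing $\nu_0$ by an appropriate refinement), and then invoke Proposition~\ref{prop:main} to replace $\nu_0$ by a divisorial valuation if needed.

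The principal obstacle is this last compatibility step: extracting a single valuation dominating the entire $\Delta$-pair, rather than unrelated valuations for each $\langle I_K, d\rangle$. The strategy is to use the rank bound of Theorem~\ref{thm:rank} to tightly constrain the $\Z_\ell$-rank of $\langle I_K, d\rangle$ and thereby force the flag of $\nu_d$ to refine that of $\nu_0$, with $\D^a_{\nu_d} \subseteq \D^a_{\nu_0}$. This rigidity reflects the philosophy of the Bogomolov--Tschinkel program that the fan $\Sigma(\G^c_K)$ encodes a flag-of-valuations structure: liftable subgroups of $\G^a_K$ correspond to Abhyankar valuations whose inertia and decomposition subgroups assemble functorially into nested configurations, and it is this rigidity that converts pointwise liftability along $D_K$ into a single valuation-theoretic witness contradicting non-extendability.
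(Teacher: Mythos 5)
Your reduction to a single non-extendable $\Delta$-pair and the compactness argument over the inverse system of finite quotients are both correct and match the spirit of the paper's proof. However, the last step --- producing a single valuation witnessing extendability of $(I,D)$ --- is where the paper's argument diverges from yours and where your proposal has a genuine gap.

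The paper does not try to derive the existence of a suitable valuation from Theorem~\ref{theo:main-valu} alone. Instead, it first decomposes the putative limit $\Delta$-pair into a fixed finite collection of liftable subgroups: choosing coset representatives $g_1,\dots,g_n$ of $D/I$, it sets $\sigma_j := \langle g_j, I\rangle$, each of which lies in the fan by the very definition of a $\Delta$-pair, and replaces $\tilde D$ with $\cup_j \tilde{\sigma}_j$, $\tilde I$ with $\cap_j \tilde{\sigma}_j$. Compactness then produces a compatible family $\sigma_{K,1},\dots,\sigma_{K,n} \in \Sigma(\G^c_K)$ with $\cap \sigma_{K,j}$ and $\cup \sigma_{K,j}$ surjecting onto $I$ and $D$. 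At that point, the paper invokes the \emph{theory of liftable pairs} (citing \cite[Section 6]{bt-commute} and \cite[Corollary 4.3]{bt1}), a result strictly stronger than Theorem~\ref{theo:main-valu}: it takes such a configuration of liftable subgroups sharing a common intersection and produces a single valuation $\nu$ with $\I^a_\nu \supseteq \cap\sigma_{K,j}$ and $\D^a_\nu \supseteq \cup\sigma_{K,j}$. This is exactly the missing piece in your argument.

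Your plan to recover the common valuation ``by showing that all these valuations fit into a common flag'' using Theorem~\ref{thm:rank} is only a heuristic, and as you yourself acknowledge it is ``the principal obstacle.'' Theorem~\ref{theo:main-valu} by itself gives you a valuation per liftable subgroup, but it gives you no control over how the valuations $\nu_d$ for varying $d \in D_K$ relate to one another, and the rank bound in Theorem~\ref{thm:rank} constrains ranks but does not by itself force the decomposition groups to nest or the inertia groups to align. The flag structure you want is precisely what the cited ``theory of liftable pairs'' establishes, and reproving it from Theorem~\ref{theo:main-valu} and Theorem~\ref{thm:rank} would amount to redoing a substantial portion of \cite{bt-commute}. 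To repair your proposal, restructure the compactness step to track the finite family $\{\sigma_{K,j}\}$ rather than the pair $(I_K, D_K)$ directly, and then cite the liftable-pairs result rather than attempting to re-derive it.
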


\begin{proof}
As in the proof of Lemma~\ref{lemm:compact}, it suffices to establish  
the statement for one nonextendable 
$\Delta$-pair; indeed, there are only finitely many $\Delta$-pairs in $G^a$ and 
the same application of Lemma~\ref{lemm:group} will then establish
it for all.

Assume that there is no finite quotient of $\G^a_K$ with the desired property.
We start with a factorization 
$$
\G_K^a\ra \tilde{G}^a\stackrel{\gamma}{\lra} G^a
$$ 
such that $\tilde{G}^a$ satisfies the conclusions of 
Lemma~\ref{lemm:compact}, i.e., no $\sigma \in \Sigma(G^c)\setminus \Sigma_E(G^c)$
is the image of a $\tilde{\sigma}\in \Sigma(\tilde{G}^c)$. 

Let $(I,D)$ be a nonextendable $\Delta$-pair. By our assumption, there exists a 
$\Delta$-pair $(\tilde{I}, \tilde{D})$
in $\tilde{G}^a$ surjecting onto $(I,D)$. 
Choose representatives $g_1,\ldots, g_n\in D$ for $D/I$ and their preimages 
$\tilde{g}_j:=\gamma^{-1}(g_j)\in \tilde{D}$. Note that for each $j$,  
$$
\sigma_j:=\langle g_j, I\rangle\in \Sigma(G^c), \quad  
\tilde{\sigma}_j:=\langle \tilde{g}_j, \tilde{I}\rangle\in \Sigma(G^c). 
$$
and that $\tilde{\sigma}_j$ surject onto $\sigma_j$. 
By Lemma~\ref{lemm:compact} and our choice of $\tilde{G}^a$, 
all $\sigma_j$ are extendable. Moreover, 
$$
\tilde{I}=\cap_{j=1}^n \tilde{\sigma}_j. 
$$
We replace and rename the original $\tilde{D}$ by  
$$
\tilde{D}:=\cup_{j=1}^n \tilde{\sigma}_j.
$$
Then $(\tilde{I}, \tilde{D})$ is a $\Delta$-pair in $\tilde{G}^a$ surjecting onto $(I,D)$. 

Now we consider a projective system of finite continuous quotients 
$$
\G^a_K \ra\tilde{G}^a_{\iota}\ra \tilde{G}^a\ra G^a, \quad \quad 
\gamma_{\iota \iota'}:G^a_{\iota} \stackrel{}{\lra} G^a_{\iota'}, 
\quad \iota' \preceq \iota.
$$
Assume that for each $\iota$ 
there exists a 
$\Delta$-pair $(\tilde{I}_{\iota},\tilde{D}_{\iota})$
in $\tilde{G}^a_{\iota}$ surjecting onto $(I,D)$. Iterating the construction above, 
we construct, for each $\iota$, a collection of liftable subgroups 
$$
\tilde{\sigma}_{\iota, 1}, \ldots,  \tilde{\sigma}_{\iota,n}
$$
and a $\Delta$-pair $(\tilde{I}_{\iota},\tilde{D}_{\iota})$ of the form
$$
\tilde{I}_{\iota}=\cap_{j=1}^n \, \tilde{\sigma}_{\iota,j}, \quad \tilde{D}_{\iota}:=
\cup_{j=1}^n \, \tilde{\sigma}_{\iota,j},
$$
such that
\begin{itemize}
\item $(\tilde{I}_{\iota},\tilde{D}_{\iota})$ surjects onto $(\tilde{I}_{\iota'},\tilde{D}_{\iota'})$, 
for each $\iota'\preceq \iota$, 
\end{itemize}
and in particular onto $(I,D)$.
By compactness of $\G^a_K$ (see Lemma~\ref{lemm:compact}), there exist closed subgroups 
$$
\sigma_{K,1}, \ldots , \sigma_{K,n} \in \Sigma(\G^c_K);
$$
the closed subgroups
$$
\I^a:=\cap\,  \sigma_{K,j}, \quad \D^a:=\cup\,  \sigma_{K,j}
$$ 
of $\G^a_K$ surject onto $I$, resp. $D$. 
On the other hand, 
by the theory of liftable pairs (see \cite[Section 6]{bt-commute} and \cite[Corollary 4.3]{bt1}), 
there exists a valuation $\nu\in \Val_K$ such that 
$$
\I^a\subseteq \I^a_{\nu}, \quad \D^a\subseteq \D^a_{\nu}.
$$
This contradicts our assumption that $(I,D)$ is not extendable. 
\end{proof}

\section{Galois cohomology of function fields}
\label{sect:galois}

In \cite{bt0}, \cite{bt1} 
we proved that if $k=\bar{\F}_p$, with $p\neq \ell$, and $X$ is an algebraic variety over $k$ 
of dimension $\ge 2$ then $K=k(X)$ is encoded, up to purely inseparable extensions, 
by $\G^c_K$, the second lower series quotient of $\G_K$. 
Related reconstruction results have been obtained in 
\cite{pop}, \cite{mochizuki},  \cite{pop2}.

The proof of the Bloch--Kato conjecture by Voevodsky, Rost, and Weibel, 
substantially advanced our understanding of the relations between 
fields and their Galois groups, in particular, their Galois cohomology. 
Indeed, consider the diagram

\[
\centerline{
\xymatrix@!{
      & G_K\ar[dl]_{\,\,\,\pi_c} \ar[dr]^{\,\,\,\pi} & \\
 \G^c_K \ar[rr]_{\pi_a}  &   &           \G^a_K. \\
}
}
\]

\

\noindent
The following theorem relates the Bloch--Kato 
conjecture to statements in Galois-cohomology, with coefficients in 
$\Z/\ell^n$ (see also \cite{efrat-1}, \cite{chebolu},
\cite{pos}).   

\begin{thm} \cite{B-3}, \cite[Theorem 11]{bt-msri}
\label{thm:bk}
Let $k=\bar{\F}_p$, $p\neq \ell$, and $K=k(X)$ be the function field of an algebraic variety 
of dimension $\ge 2$. The Bloch--Kato conjecture for $K$ is equivalent to:
\begin{enumerate}
\item 
The map 
$$
\pi^* \colon \rH^*(\G^a_K, \Z/\ell^n)\to \rH^*(\Gal_K, \Z/\ell^n)
$$ 
is surjective and
\item 
$\
\Ker(\pi_a^*) = \Ker(\pi^*)$.
\end{enumerate}
\end{thm}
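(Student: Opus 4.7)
The plan is to factor the norm residue (symbol) map through the pro-$\ell$ abelianization and to read off the two halves of Bloch--Kato (surjectivity and injectivity of symbols) from conditions $(1)$ and $(2)$.

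First I record the basic identifications. Since $k = \bar{\mathbb F}_p$ and $\ell \neq p$, Kummer theory gives
$$
\rH^1(G_K,\Z/\ell^n) = K^\times/(K^\times)^{\ell^n} = \rH^1(\G^a_K,\Z/\ell^n),
$$
so $\pi^*$ is an isomorphism in degree one. Because $\G^a_K$ is a torsion-free pro-$\ell$ abelian group, continuous cohomology yields the exterior algebra
$$
\rH^*(\G^a_K,\Z/\ell^n) \;=\; \wedge^*_{\Z/\ell^n}\, \rH^1(\G^a_K,\Z/\ell^n),
$$
Bockstein contributions at finite quotients becoming $\ell$-divisible in the colimit over open subgroups. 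The classical Steinberg vanishing $\chi_a \cup \chi_{1-a}=0$ in $\rH^2(G_K,\Z/\ell^n)$ (Tate for $n=1$, Merkurjev--Suslin in general) implies that $\pi^*$ kills the ideal $\mathcal S \subset \rH^*(\G^a_K)$ generated by the elements $\chi_a \wedge \chi_{1-a}$. Since $\rH^*(\G^a_K)/\mathcal S$ is by definition $\mathrm K_*^M(K)/\ell^n$, the map $\pi^*$ descends to the norm-residue map $\Phi \colon \mathrm K_*^M(K)/\ell^n \to \rH^*(G_K,\Z/\ell^n)$, giving a factorization $\pi^* = \Phi \circ q$ with $q$ the canonical surjection.

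Surjectivity of $\pi^*$ is then equivalent to surjectivity of $\Phi$, i.e.\ to the statement that $\rH^*(G_K,\Z/\ell^n)$ is generated by symbols; this is the equivalence of $(1)$ with the surjectivity half of Bloch--Kato. For the injectivity half, the key input is the structural identification
$$
\Ker(\pi_a^*) \;=\; \mathcal S,
$$
which is the content of \cite{B-3} and \cite[Theorem 11]{bt-msri}. The inclusion $\mathcal S \subseteq \Ker(\pi_a^*)$ expresses that for each pair $(a,1-a)$ the Kummer characters $\chi_a, \chi_{1-a}$ lift to commuting elements in $\G^c_K$, which follows from the explicit form of the commutator pairing in the central extension $1 \to \cZ_K \to \G^c_K \to \G^a_K \to 1$. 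The reverse inclusion is the hard step: in degree two, the five-term exact sequence identifies the degree-two part of $\Ker(\pi_a^*)$ with the image of the transgression $\Hom(\cZ_K,\Z/\ell^n) \to \rH^2(\G^a_K,\Z/\ell^n)$, dual to the commutator pairing $\wedge^2 \G^a_K \to \cZ_K$; the valuation-theoretic analysis of liftable subgroups from Section~\ref{sect:valuation} (Theorem~\ref{theo:main-valu} and Proposition~\ref{prop:main}), combined with Merkurjev--Suslin, shows this image is no larger than $\mathcal S$, and higher degrees follow from the ideal structure.

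Granting $\Ker(\pi_a^*) = \mathcal S$, condition $(2)$ reads $\Ker(\pi^*) = \mathcal S$, which combined with $(1)$ is exactly the bijectivity of $\Phi$, i.e.\ Bloch--Kato. Conversely, Bloch--Kato gives $\Ker(\pi^*) = \mathcal S$, and the chain $\mathcal S \subseteq \Ker(\pi_a^*) \subseteq \Ker(\pi^*)$ (the last inclusion from $\pi^* = \pi_c^* \circ \pi_a^*$) forces equality, yielding $(2)$. The main obstacle is the structural identification $\Ker(\pi_a^*) = \mathcal S$, which is really a group-theoretic incarnation of Merkurjev--Suslin and its higher-degree analogue; once it is in hand, the equivalence reduces to bookkeeping with the factorization $\pi^* = \Phi \circ q$.
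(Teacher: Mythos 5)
The paper does not actually prove Theorem~\ref{thm:bk}; it is quoted from \cite{B-3} and \cite[Theorem~11]{bt-msri}, so there is no internal proof to compare against. Your factorization through Milnor $K$-theory is the right picture, and given the structural identity $\Ker(\pi_a^*)=\mathcal S$ the remaining bookkeeping is correct: $(1)$ becomes surjectivity of the Galois symbol, and $(2)$ together with $\Ker(\pi_a^*)=\mathcal S$ becomes injectivity, while conversely Bloch--Kato gives $\Ker(\pi^*)=\mathcal S$ and the chain $\mathcal S\subseteq\Ker(\pi_a^*)\subseteq\Ker(\pi^*)$ collapses.

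Two points deserve more care. First, the claim that $\rH^*(\G^a_K,\Z/\ell^n)/\mathcal S$ is ``by definition'' $\rK^M_*(K)/\ell^n$ is not a definition: Milnor $K$-theory is the tensor algebra modulo Steinberg, and one has $\{a,a\}=\{a,-1\}$, which need not vanish on the nose. The two rings agree here only because $k=\bar{\F}_p$ is algebraically closed, so $-1$ is an $\ell^n$-th power in $K$ and $\{a,a\}$ dies mod $\ell^n$; without that observation the passage from the exterior algebra to $\rK^M_*(K)/\ell^n$ is unjustified, and for fields with nontrivial $\{a,-1\}$ it would actually fail. You should state this explicitly rather than appealing to a nonexistent definitional equality.

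Second, and more seriously for the logical content, your entire argument hangs on $\Ker(\pi_a^*)=\mathcal S$, which you cite to exactly the same sources as the theorem itself. With that identity in hand the equivalence is essentially tautological (both sides reduce to ``the Galois symbol is bijective''), so the theorem has no content beyond the cited structural fact — you should say this openly rather than present it as if the bookkeeping were doing the work. Moreover, your gesture that Theorem~\ref{theo:main-valu} and Proposition~\ref{prop:main} ``combined with Merkurjev--Suslin'' establish the reverse inclusion $\Ker(\pi_a^*)\subseteq\mathcal S$ is not supported by what those results actually say: they characterize liftable subgroups of $\G^a_K$ via inertia and decomposition data of valuations, which is the input for the \emph{unramified} theory of later sections, not a computation of the kernel of $\pi_a^*$. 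If you want to claim the paper's Section~\ref{sect:valuation} suffices for that inclusion, you would need to exhibit the argument; as written this is an unsubstantiated bridge.
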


This implies that the Galois cohomology of the pro-$\ell$- quotient $\G_K$ 
of the absolute Galois group $G_K$ encodes important birational information of $X$. 
For example, in the case above, $\G^c_K$, and hence $K$, modulo purely-inseparable
extensions, can be recovered from the cup-products
$$
\rH^1(\G_K,\Z/\ell^n) \cup \rH^1(\G_K,\Z/ \ell^n) \ra 
\rH^2(\G_K,\Z/\ell^n), \quad n\in \N.
$$
From now on, we will frequently omit the coefficient ring $\Z/\ell^n$ from notation.

The first part of the Bloch-Kato theorem says that every $\alpha_K\in \rH^i(G_K)$ is
induced from a cohomology class $\alpha^a\in \rH^i(G^a)$ of 
some finite {\em abelian quotient} $G_K\ra G^a$. 
An immediate application of this is the following proposition:

\begin{prop}
\label{prop:bk-apply}
Let $\alpha_K\in \rH^i(G_K)$ be defined on a model $X$ of $K$ and induced from a continuous 
surjective homomorphism $\chi:\hat{\pi}_1(X)\ra G$ onto a finite group. Let $\alpha=\alpha_X\in \rH^i_{et}(X)$ 
be the class representing $\alpha_K$ on $X$. Then 
there exists a finite cover $X=\cup_j X_j$ by Zariski open subvarieties such that, for each
$j$, the restriction $\alpha_j:=\alpha|_{X_j}$ is induced from a continuous surjective
homomorphism $\chi_j: \hat{\pi}_1(X_j)\ra G^a$ onto a finite abelian group and 
a class $\alpha^a\in \rH^i_s(G^a)=\wedge^i(\rH^1(G^a))$. 
\end{prop}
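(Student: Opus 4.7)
The plan is to combine Bloch--Kato for function fields (Theorem~\ref{thm:bk}) with its local refinement for regular local rings, in order to produce, at each point of $X$, a symbolic representation of $\alpha$ which then spreads to a Zariski neighborhood.

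First I would apply Theorem~\ref{thm:bk}(1) to $\alpha_K=\tilde{\eta}_X^*(\alpha_X)$: since $\G^a_K$ is a free pro-$\ell$ $\Z_\ell$-module,
$$
\rH^i(\G^a_K,\Z/\ell^n)\simeq \wedge^i \rH^1(\G^a_K,\Z/\ell^n),
$$
so $\alpha_K=\pi^*(\beta)$ for a finite wedge sum $\beta=\sum_s\beta_{s,1}\wedge\cdots\wedge\beta_{s,i}$. The characters $\beta_{s,t}$ jointly factor through a continuous surjection $\gamma\colon \G^a_K\to G^a$ onto a finite abelian $\ell$-group, yielding $\alpha^a\in\wedge^i\rH^1(G^a)=\rH^i_s(G^a)$ with $\gamma^*(\alpha^a)=\beta$. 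By Kummer theory, $\gamma$ is described by a finite collection $\{f_1,\ldots,f_N\}\subset K^\times$; on the open $X^\circ:=X\setminus\bigcup_j\supp(\dv f_j)$ all $f_j$ are units, so $\gamma\circ\pi$ lifts to a continuous surjection $\chi^\circ\colon\hat{\pi}_1(X^\circ)\to G^a$. After shrinking $X^\circ$ so that $\alpha|_{X^\circ}$ and $(\chi^\circ)^*(\alpha^a)$, which agree in $\rH^i(G_K)$, already agree in $\rH^i_{et}(X^\circ)$, this produces one open of the desired cover.

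Next I would handle a point $x\in X\setminus X^\circ$ by finding a different symbolic representation adapted to $x$. Assuming $X$ is smooth at $x$ (which can be arranged by restricting to the smooth locus, or more generally by invoking alterations), the Bloch--Kato/norm-residue isomorphism for the regular local ring $\mathcal O_{X,x}$ gives a representation
$$
\alpha|_{\Spec\mathcal O_{X,x}}=\sum_s\{f^{(x)}_{s,1},\ldots,f^{(x)}_{s,i}\},\qquad f^{(x)}_{s,t}\in\mathcal O_{X,x}^\times.
$$
Since étale cohomology of quasi-compact quasi-separated schemes commutes with cofiltered limits along affine transitions, this equality descends to a Zariski open neighborhood $U_x\ni x$ on which all $f^{(x)}_{s,t}$ remain units. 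The $f^{(x)}_{s,t}$ then determine a continuous surjective homomorphism $\chi_x\colon\hat{\pi}_1(U_x)\to G^a_{(x)}$ onto a finite abelian $\ell$-group together with a class $\alpha^a_{(x)}\in\wedge^i\rH^1(G^a_{(x)})=\rH^i_s(G^a_{(x)})$ satisfying $\chi_x^*(\alpha^a_{(x)})=\alpha|_{U_x}$.

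To finish, I would use noetherianity of $X$ to extract a finite subcover from $\{X^\circ\}\cup\{U_x\}_x$. The main obstacle is the local Bloch--Kato input in the second paragraph: unlike Theorem~\ref{thm:bk} it concerns étale cohomology of a regular local ring rather than a field, and it is exactly what is needed to rewrite $\alpha$ as a symbol in units in a Zariski neighborhood of a point lying on a divisor where the globally chosen Kummer representatives vanish or have a pole.
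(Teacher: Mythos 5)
Your proposal takes a genuinely different route from the paper, and it does contain a real gap that you yourself flag. The paper's proof makes a single application of Bloch--Kato to the universal quotient $V^\circ/G$, producing one good open $U\subset V^\circ/G$ on which the class is induced from a finite abelian quotient, and then invokes the moving lemma (Lemma~\ref{lemm:vg}) to manufacture, for each $x\in X$, a $G$-equivariant comparison map $f_x\colon X\to V/G$ with $f_x(x)\in U$; the pullbacks $f_x^{-1}(U)$ then form the desired cover. The crucial point is that nothing local on $X$ is ever needed: the moving lemma pushes the bad locus away from $x$ by varying the map, not the symbolic representative.

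Your version instead fixes the map and varies the symbolic representative, which forces you to invoke a \emph{local} form of Bloch--Kato: that $\alpha|_{\Spec\mathcal O_{X,x}}$ can be rewritten as a sum of symbols in units of $\mathcal O_{X,x}$. This is not contained in Theorem~\ref{thm:bk}, which is a statement about the Galois cohomology of the field $K$, not about étale cohomology of a regular local ring. It is in fact a Gersten-type statement (essentially Kerz's resolution of the Gersten conjecture for Milnor $K$-theory, combined with Bloch--Kato), and it is a substantially stronger input than the paper ever uses. You correctly identify this as the ``main obstacle,'' but it is not a small lacuna to be filled in---it is the whole content of the step, and the paper's proof is structured precisely to avoid it. There is also a secondary problem: the local rewriting only makes sense at \emph{regular} points. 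The model $X$ in the proposition is arbitrary; ``restricting to the smooth locus'' does not cover $X$, and ``invoking alterations'' replaces $X$ by a different variety, whereas the statement fixes $X$ and $\alpha_X\in\rH^i_{et}(X)$. The moving-lemma argument in the paper works for any affine $X$ with no smoothness hypothesis, so it sidesteps this issue as well.

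In short: your first paragraph (spreading out the global symbolic representation to a dense open $X^\circ$) is fine and parallels the paper's first step. But where the paper then reuses that single open $U$ via the flexibility of $G$-maps into $V/G$, you instead try to redo Bloch--Kato stalk-by-stalk, which requires Gersten for Milnor $K$-theory at regular local rings and fails at singular points. If you want to repair your argument, the cleanest fix is to abandon the local Bloch--Kato step and use Lemma~\ref{lemm:vg} as the paper does.
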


\begin{proof}
We first apply the Bloch-Kato theorem to $V^\circ/G$ and find a Zariski open subset
$U:=U^\circ_{\alpha}$ such that the restriction $\alpha_U$ is as claimed, i.e., 
induced from a class $\alpha^a\in \rH^i_s(G^a)=\wedge^i(\rH^1(G^a))$, for some 
homomorphism $\chi_a:G_K\ra G^a$ to a finite abelian group. 
 
By Lemma~\ref{lemm:vg}, for every  $x\in X$ there exists a map $f=f_x$ such that 
$f(x)\subset U$ and the restriction of $\alpha$ to $f^{-1}(U)\subset X$ equals
$f^*(\chi_a^*(\alpha^a))$. The claim follows by choosing a finite cover by open subvarieties
with these properties.   
\end{proof}

The second part implies the following:

\begin{coro}
\label{coro:indd}
Let $\alpha_K\in \rH^i(G_K)$. 
Assume that we are given finitely many quotients 
$$
\chi_j:G_K\ra G^a_j
$$ 
onto finite abelian groups 
and classes 
$$
\alpha_j^a\in \rH^i(G^a_j)
$$
with $\chi_j^*(\alpha_j^a)=\alpha_K$, for all $j$. 
Then there exists a continuous 
finite quotient $G_K\ra G^c$ onto a finite central extension 
of an abelian group $G^a$ such that 
\begin{itemize}
\item $\chi_j$ factor through $G^c$, i.e., there exist surjective homomorphisms
$\psi_j : G^c\ra G_j^a$ , for all $j$; 
\item there exists a class $\alpha^c\in \rH^i(G^c)$ with
$$
\alpha^c=\psi_j^*(\alpha_j^a), \quad \text{ for all } \quad j.
$$
\end{itemize}
\end{coro}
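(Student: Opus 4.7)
The plan is to reduce to a single abelian quotient, extract a cohomological obstruction living in the kernel of the map to $\rH^*(G_K)$, and then apply the second half of the Bloch--Kato statement (Theorem~\ref{thm:bk}(2)) together with continuity of cohomology on profinite groups.

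First, I would collect the data by taking $\bar{G}^a$ to be the image of $\G^a_K$ in $\prod_{j} G^a_j$; then every $\chi_j$ factors as $\G^a_K\twoheadrightarrow \bar{G}^a\stackrel{\bar\chi_j}{\twoheadrightarrow} G^a_j$. Pulling back via $\bar\chi_j^*$ produces lifts $\bar\alpha_j:=\bar\chi_j^*(\alpha_j^a)\in \rH^i(\bar{G}^a)$, all of which pull back to the same class $\alpha_K\in \rH^i(G_K)$. Consequently, the differences $\beta_j:=\bar\alpha_j-\bar\alpha_1\in \rH^i(\bar{G}^a)$ lie in the kernel of $\rH^i(\bar{G}^a)\to \rH^i(G_K)$, and hence, after further pull-back along $\G^a_K\twoheadrightarrow \bar{G}^a$, yield classes in $\Ker\bigl(\rH^i(\G^a_K)\to \rH^i(G_K)\bigr)=\Ker(\pi^*)$.

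The key input is now Theorem~\ref{thm:bk}(2): this kernel coincides with $\Ker(\pi_a^*)$, so each $\beta_j$ vanishes in $\rH^i(\G^c_K)$. Since $\G^c_K$ is a projective limit of its finite quotients and continuous cohomology with finite coefficients commutes with such limits, we have
\[
\rH^i(\G^c_K,\Z/\ell^n)=\varinjlim_\iota \rH^i(G^c_\iota,\Z/\ell^n),
\]
where the $G^c_\iota$ run over finite continuous quotients of $\G^c_K$. Thus, for each of the finitely many indices $j$, the image of $\beta_j$ already dies in some $\rH^i(G^c_{\iota(j)})$; by passing to a common refinement (enlarging $\iota$, just as in Lemma~\ref{lemm:group}), we obtain a single finite continuous quotient $\G^c_K\twoheadrightarrow G^c$ in which all $\beta_j$ vanish simultaneously. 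After replacing $G^c$ by a further refinement if necessary, we may arrange that the induced abelian quotient $G^a$ surjects onto $\bar G^a$, so that each $\chi_j$ factors as $G_K\twoheadrightarrow G^c\stackrel{\psi_j}{\twoheadrightarrow}G^a_j$.

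We set $\alpha^c:=\psi_1^*(\alpha_1^a)\in \rH^i(G^c)$. By construction the differences $\psi_j^*(\alpha_j^a)-\psi_1^*(\alpha_1^a)$ are precisely the pullbacks of the $\beta_j$ to $\rH^i(G^c)$ and hence vanish, giving the required identity $\alpha^c=\psi_j^*(\alpha_j^a)$ for all $j$. Note that $G^c$, being a finite continuous quotient of $\G^c_K$ and hence of the canonical central extension of $\G^a_K$ by $\cZ_K$, automatically sits in a central extension $1\to Z\to G^c\to G^a\to 1$ with $G^a$ abelian, which matches the form required by the corollary. The main subtlety to watch out for is the interaction between the ``finiteness'' step (passing from $\G^c_K$ to a finite $G^c$) and the ``central extension'' structure---but since taking finite quotients of $\G^c_K$ preserves the central extension of the corresponding abelian quotient, no additional work is needed there.
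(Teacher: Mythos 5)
Your proof is correct and is the natural realization of what the paper gestures at when it writes that Theorem~\ref{thm:bk}(2) "implies the following": pull back all the classes to $\rH^i(\G^a_K)$, observe that pairwise differences lie in $\Ker(\pi^*)=\Ker(\pi_a^*)$, and then use that continuous cohomology of $\G^c_K$ with finite coefficients is the colimit over finite quotients to descend to a single finite $G^c$, which is automatically a central extension of its abelianization since $\G^c_K$ has nilpotency class~$\le 2$. One small point you might state explicitly: the argument tacitly assumes the $G^a_j$ are $\ell$-groups so that each $\chi_j$ factors through $\G^a_K$; this is harmless in context, since with $\Z/\ell^n$-coefficients one may always replace $G^a_j$ by its $\ell$-Sylow subgroup without changing $\rH^i(G^a_j)$.
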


\begin{lemm}
\label{lemm:contain}
Let $Z\subset X$ be an affine subset, $Z=\Spec(\mathfrak o_Z)$, and $\nu\in \Val_K$. 
The general point of $\nu$ is contained in $Z$, 
$$
\mathfrak c_X(\nu)^{\circ}\subseteq Z
$$ 
if and only if
$\nu(f)\ge 0$, for all $f\in \mathfrak o_Z$. 
\end{lemm}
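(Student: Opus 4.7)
The statement is essentially definitional once one recalls the characterization of the center of a valuation, so the plan is short. The center $\mathfrak c_X(\nu)$ of $\nu$ on $X$ is the unique (possibly nonexistent) point of $X$ whose local ring $\mathfrak o_{X, \mathfrak c_X(\nu)}$ satisfies $\mathfrak o_{X, \mathfrak c_X(\nu)} \subseteq \mathfrak o_\nu$ and $\mathfrak m_{X, \mathfrak c_X(\nu)} = \mathfrak o_{X, \mathfrak c_X(\nu)} \cap \mathfrak m_\nu$. Since $Z = \Spec(\mathfrak o_Z)$ is affine and $\mathfrak c_X(\nu)^\circ$ denotes the open dense part of the subvariety $\mathfrak c_X(\nu)$, the condition $\mathfrak c_X(\nu)^\circ \subseteq Z$ is equivalent to saying that the generic point $\eta$ of $\mathfrak c_X(\nu)$ lies in $Z$, i.e., $\mathfrak o_Z \subseteq \mathfrak o_{X,\eta} = \mathfrak o_{X, \mathfrak c_X(\nu)}$.

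For the forward direction, I would just chain inclusions: if $\mathfrak c_X(\nu)^\circ \subseteq Z$, then by the above, $\mathfrak o_Z \subseteq \mathfrak o_{X, \mathfrak c_X(\nu)} \subseteq \mathfrak o_\nu$, so $\nu(f) \ge 0$ for every $f \in \mathfrak o_Z$.

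For the converse, assume $\nu(f)\ge 0$ for all $f\in \mathfrak o_Z$, so that $\mathfrak o_Z \subseteq \mathfrak o_\nu$. Set
\[
\mathfrak p := \mathfrak o_Z \cap \mathfrak m_\nu = \{f \in \mathfrak o_Z : \nu(f) > 0 \}.
\]
This is a prime ideal of $\mathfrak o_Z$, and the corresponding local ring $(\mathfrak o_Z)_{\mathfrak p}$ is contained in $\mathfrak o_\nu$ with maximal ideal equal to $(\mathfrak o_Z)_{\mathfrak p} \cap \mathfrak m_\nu$; thus $\mathfrak p$ defines the center of $\nu$ on $Z$. By compatibility of centers with restriction to an affine open, this point coincides with $\mathfrak c_X(\nu)$, so the generic point of $\mathfrak c_X(\nu)$ lies in $Z$, giving $\mathfrak c_X(\nu)^\circ \subseteq Z$.

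There is no real obstacle here, the only point to be a bit careful about is the equivalence between ``the generic point of $\mathfrak c_X(\nu)$ lies in $Z$'' and the ring-theoretic inclusion $\mathfrak o_Z \subseteq \mathfrak o_{X, \mathfrak c_X(\nu)}$, which uses that $Z$ is an open affine of $X$ so that $\mathfrak o_{X,\eta}$ equals the localization of $\mathfrak o_Z$ at the prime ideal corresponding to $\eta$ whenever $\eta \in Z$.
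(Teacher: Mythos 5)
The paper states Lemma~\ref{lemm:contain} without proof, treating it as a standard consequence of the definition of the center of a valuation. Your argument is exactly that standard verification and is correct: the forward direction chains $\mathfrak o_Z \subseteq \mathfrak o_{X,\mathfrak c_X(\nu)} \subseteq \mathfrak o_\nu$, and the converse uses that $\mathfrak p = \mathfrak o_Z \cap \mathfrak m_\nu$ is the center on the affine chart $Z$, which agrees with $\mathfrak c_X(\nu)$ by uniqueness of the center on a separated scheme together with the fact that $Z$ is open in $X$ so local rings agree.
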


\begin{lemm}
\label{lemm:trivial-nu}
Let $X$ be a normal variety with function field $K$. Assume that $\alpha_K\in \rH^i(G_K)$ 
is defined on $X$ and induced from a homomorphism $\chi : \hat{\pi}_1(X)\ra G$ to a 
finite group $G$. Consider the sequence
$$
\chi_K: G_K\ra \hat{\pi}_1(X)\stackrel{\chi}{\lra} G.
$$
Then $\chi_K(I_{\nu})=0$, for every $\nu$ such that $\mathfrak c_X(\nu)^{\circ}\subset X$. 
\end{lemm}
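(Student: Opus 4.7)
The plan is to exploit the fact that $\chi$ factors through $\hat{\pi}_1(X)$: this forces the associated Galois cover to be \'etale on all of $X$, and valuations whose center already lies in $X$ cannot contribute inertia to an \'etale cover.

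First, I would replace $G$ by the image $\chi(\hat{\pi}_1(X))$ and assume $\chi$ is surjective; then $\chi$ corresponds to a connected finite \'etale Galois cover $\pi:\tilde X \to X$ with deck group $G$. Setting $\tilde K=k(\tilde X)$, the generic fibre is a Galois extension with group $G$, and the composite $\chi_K:G_K\to \hat{\pi}_1(X)\to G$ factors as $G_K\twoheadrightarrow \mathrm{Gal}(\tilde K/K)=G$. Under this identification, $\chi_K(I_\nu)$ equals the inertia of any extension $\tilde\nu$ of $\nu$ to $\tilde K$, viewed as a subgroup of $G$.

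Second, let $\mathfrak c:=\mathfrak c_X(\nu)^\circ$. The hypothesis $\mathfrak c\subset X$ together with Lemma~\ref{lemm:contain} ensures that $\mathcal O_\nu$ dominates the normal local ring $\mathcal O_{X,\mathfrak c}$. Pulling $\pi$ back to $\mathrm{Spec}(\mathcal O_{X,\mathfrak c})$ gives an \'etale cover; let $\tilde{\mathfrak c}$ be the prime of the integral closure of $\mathcal O_{X,\mathfrak c}$ in $\tilde K$ picked out by $\tilde\nu$. The local extension $\mathcal O_{\tilde X,\tilde{\mathfrak c}}/\mathcal O_{X,\mathfrak c}$ is \'etale, hence unramified, so the inertia $I_{\tilde\nu/\nu}$ inside $\mathrm{Gal}(\tilde K/K)=G$ vanishes. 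This yields $\chi_K(I_\nu)=0$, as desired.

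The only potential subtlety is the precise interpretation of ``the general point of $\nu$ is contained in $X$''; but this is exactly what Lemma~\ref{lemm:contain} takes care of, reducing the condition to the domination statement used above. After that, the argument is the standard translation of \'etaleness into triviality of inertia, and there is no genuine obstacle to overcome.
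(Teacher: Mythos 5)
Your proof is correct and takes essentially the same route as the paper: the paper's proof is the one-sentence observation that an \'etale cover of $X$ restricts to an \'etale cover of the generic point of $\mathfrak c_X(\nu)$, hence is unramified there, which you unpack into the standard argument that an \'etale local extension $\mathcal O_{\tilde X,\tilde{\mathfrak c}}/\mathcal O_{X,\mathfrak c}$ dominated by $\mathcal O_{\tilde\nu}$ forces the valuation-theoretic inertia $I(\tilde\nu/\nu)$ to vanish.
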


\begin{proof}
An \'etale cover of $X$ induces an \'etale cover of the generic point of 
$\mathfrak c_X(\nu)$, thus the cover is unramified in $\nu$, i.e., $\chi_K(I_\nu)=0$.  
\end{proof}

\begin{coro}
Let $\alpha_K\in \rH^i(G_K)$. 
Let $X$ be a normal projective model of $K$ and $\cup_jX_j$ a 
finite cover by open subvarieties such that
$\alpha_K$ is defined on $X_j$,  for each $j$, 
and is induced from a class $\alpha_j^a\in \rH^i(G^a_j)$, via
a homomorphism $\chi_j:\hat{\pi}_1(X_j)\ra G^a_j$ to some finite abelian group. 
Then there exists a quotient 
$\chi_K:G_K\ra G^a$ and a class $\alpha^a\in \rH^i(G^a)$ such that
\begin{itemize}
\item
$\alpha^a$ induces $\alpha_K$ and
\item 
$\alpha^a$ is unramified on every extendable $\Delta$-pair $(I,D)$ in $G^a$.  
\end{itemize}
\end{coro}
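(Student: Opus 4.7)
The plan is to combine the local data $(\chi_j,\alpha_j^a)$ into a single abelian quotient of $G_K$ and to use projectivity of $X$ to localize the analysis of each extendable $\Delta$-pair to one of the charts $X_j$, where the inertia component of the pair is visibly killed.

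First, I would apply Corollary~\ref{coro:indd} to the family $\{(\chi_j,\alpha_j^a)\}_j$ to obtain a continuous surjection $G_K\twoheadrightarrow G^c$ onto a finite central extension
$$
1\to Z\to G^c\to G^a\to 1,
$$
factorizations $\psi_j:G^c\twoheadrightarrow G_j^a$, and a common class $\alpha^c\in \rH^i(G^c)$ with $\psi_j^*(\alpha_j^a)=\alpha^c$ for every $j$. Since each $G_j^a$ is abelian, every $\psi_j$ descends to $\bar{\psi}_j:G^a\to G_j^a$, so $\alpha^c=\pi_a^*(\bar{\psi}_j^*(\alpha_j^a))$ lies in the image of $\pi_a^*:\rH^i(G^a)\to\rH^i(G^c)$. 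Choose any preimage $\alpha^a\in\rH^i(G^a)$; then the composite $\chi_K:G_K\twoheadrightarrow G^c\to G^a$ satisfies $\chi_K^*(\alpha^a)=\alpha_K$.

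Now fix an extendable $\Delta$-pair $(I,D)$ in $G^a$, with witnessing valuation $\nu\in\Val_K$ and subgroups $I^a\subseteq \I^a_{\nu}$, $D^a\subseteq \D^a_{\nu}$ of $\G^a_K$ with $\gamma_K(I^a)=I$, $\gamma_K(D^a)=D$, where $\gamma_K:\G^a_K\to G^a$. By the valuative criterion applied to the projective variety $X$, the valuation $\nu$ has a center $\mathfrak{c}_X(\nu)\subset X$; finiteness of the open cover $X=\cup_j X_j$ together with irreducibility of the center ensures that its general point lies in some $X_j$, so that $\mathfrak{c}_{X_j}(\nu)^\circ\subseteq X_j$. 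Lemma~\ref{lemm:trivial-nu} applied to $\chi_j:\hat{\pi}_1(X_j)\to G_j^a$ yields $\chi_j(I_\nu)=0$, and therefore $\bar{\psi}_j(I)\subseteq \bar{\psi}_j(\gamma_K(\I^a_{\nu}))=0$. The first case of Lemma~\ref{lemm:unram-delta}, applied to the homomorphism $\bar{\psi}_j:G^a\to G_j^a$ and the class $\alpha_j^a$, then shows that $\bar{\psi}_j^*(\alpha_j^a)\in\rH^i(G^a)$ is unramified with respect to $(I,D)$.

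The main obstacle is a compatibility issue: the classes $\bar{\psi}_j^*(\alpha_j^a)$ share the common lift $\alpha^c\in\rH^i(G^c)$, but in $\rH^i(G^a)$ they only agree modulo $\ker(\pi_a^*)$, while distinct extendable $\Delta$-pairs will in general be handled by distinct indices $j$. To bridge this gap, I would pass to the quotient $\rH^i_s(G^a)/\rI(G^c)$, which by Lemmas~\ref{lemm:iso-delta-pair} and~\ref{lemm:bock} depends only on the isoclinism class of the extension and embeds in $\rH^i_s(G^c)$; in that quotient the image of $\alpha^a$ is canonical and coincides with every $\bar{\psi}_j^*(\alpha_j^a)$. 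It remains to verify that the unramifiedness condition of Definition~\ref{defn:unram} descends to this quotient, which follows from Lemma~\ref{lemm:what-for} combined with the $\Delta$-pair hypothesis $\langle I,\delta\rangle\in\bar{\Sigma}(G^c)$ for every $\delta\in D$: it forces classes in $\rI(G^c)$ to restrict on $D$ to classes induced from $D/I$. Coupled with the per-pair unramifiedness of $\bar{\psi}_j^*(\alpha_j^a)$ established above, this produces a representative $\alpha^a$ with the desired global property.
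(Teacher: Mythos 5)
Your proposal follows the paper's overall strategy quite closely: assemble the local data $(\chi_j,\alpha_j^a)$ into a common central quotient via Corollary~\ref{coro:indd}, use projectivity of $X$ to place the general point of the center of the witnessing valuation in some chart $X_j$, invoke Lemma~\ref{lemm:trivial-nu} to kill the inertia image, and then apply the first case of Lemma~\ref{lemm:unram-delta}. You also correctly and explicitly isolate the compatibility issue that the paper's two-sentence proof glosses over: the classes $\bar\psi_j^*(\alpha_j^a)\in\rH^i_s(G^a)$ only agree modulo $\rI(G^c)$, while the $\Delta$-pairs are handled by different indices $j$.

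However, the specific bridge you offer for that compatibility issue is incorrect. You assert that Lemma~\ref{lemm:what-for} together with the $\Delta$-pair hypothesis ``forces classes in $\rI(G^c)$ to restrict on $D$ to classes induced from $D/I$.'' This is true in degree $2$ but fails in degree $\ge 3$. For a concrete counterexample take $G^a=(\Z/\ell)^3=\langle a,b,c\rangle$ and let $G^c$ be the central extension with $[a,b]=z\neq 0$, $[a,c]=[b,c]=0$, so that $\rR^2(G^c)=\langle a^*\wedge b^*\rangle$. Let $I=\langle c\rangle$ and $D=G^a$; then $(I,D)$ is a $\Delta$-pair, since $\langle I,\delta\rangle$ is liftable for every $\delta\in D$. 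The class $a^*\wedge b^*\wedge c^*$ lies in $\rI(G^c)$ in degree $3$, but it cannot be induced from $D/I\cong(\Z/\ell)^2$, because $\rH^3_s(D/I)=\wedge^3\rH^1(D/I)=0$ while $a^*\wedge b^*\wedge c^*\neq 0$ in $\rH^3_s(D)$. So Lemma~\ref{lemm:what-for}, which only controls restriction to liftable subgroups $\sigma\in\Sigma$, is not strong enough to conclude that the ideal $\rI(G^c)$ is unramified on arbitrary $\Delta$-pairs.

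The correct way to close the gap, and what the remark following Definition~\ref{defn:unram} and the argument in Section~\ref{sect:smooth} are really using, is to read the unramifiedness condition at the level of $\rH^*_s(G^c)$: one asks that the image of the class under $\pi_a^*$, restricted to $D^c=\pi_a^{-1}(D)$, be induced from $D^c/I^c=D/I$. At that level the ideal $\rI(G^c)$ vanishes by definition, so the condition is well-defined on $\rH^*_s(G^a)/\rI(G^c)$ with no further argument. Since unramifiedness of $\bar\psi_j^*(\alpha_j^a)$ on $(I,D)$ in the $\rH^*_s(G^a)$-sense (your Step~3) implies the corresponding statement for its image $\alpha^c$ in $\rH^*_s(G^c)$, and that image is the same for all $j$, the gluing then becomes automatic. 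With that reinterpretation your argument goes through and matches the paper's proof.
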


\begin{proof}
Each $\alpha_j^a$ is unramified on all $\nu$ 
such that the generic point $\mathfrak c_X(\nu)^\circ\subset X_j$, 
by Lemma~\ref{lemm:trivial-nu}.
Since the classes $\alpha_j$ define the same element $\alpha_K\in \rH^i(G_K)$, there exists
an intermediate quotient $G^a$ and a class in 
$\alpha^a\in \rH^i(G^a)$ inducing $\alpha_K$. 
\end{proof}

\section{Unramified cohomology}
\label{sect:unram-cohomology}

An important class of birational invariants of algebraic varieties 
are \emph{unramified} cohomology groups, with finite constant coefficients 
(see \cite{bog-stable}, \cite{colliot-ojan}). These are defined as follows:
Let $\nu$ be a divisorial valuation of $K$. 
We have a natural homomorphism
$$
\partial_{\nu} \,:\,  \rH^i(G_K) \ra \rH^{i-1}(G_{\KK_{\nu}}).
$$ 
Classes in $\ker(\partial_{\nu})$ are called {\em unramified with respect to $\nu$}. 
The {\em unramified} cohomology is
$$
\rH^i_{nr} (G_K):=\bigcap_{\nu\in \DVal_K} 
\Ker(\partial_{\nu}) \subset \rH^i(G_K).
$$
For $i=2$ this is the {\em unramified Brauer group} which was used to 
provide counterexamples to Noether's problem, i.e.,  
nonrational varieties of type $V/G$,
where $V$ is a faithful representation of a finite group $G$ (see \cite{saltman}, 
\cite{bog-87}).

Generally, for $\nu\in \Val_K$ and $\alpha\in \rH^i(G_K)$ let
$$
\alpha_{\nu} \in \rH^i(D_{\nu})
$$
be the restriction of $\alpha$ to the decomposition subgroup $D_{\nu}\subset G_K$ of $\nu$.

\begin{lemm}
\label{lemm:unram-class}
A class $\alpha\in \Ker(\partial_{\nu})\subseteq \rH^i(G_K)$, 
for $\nu\in \DVal_K$, 
if and only if $\alpha_{\nu}$ is induced from the quotient $G_{\KK_{\nu}}=D_{\nu}/I_{\nu}$.
In particular, $\alpha_{\nu}$ is well-defined as an element in $\rH^i(G_{\KK_{\nu}})$. 
\end{lemm}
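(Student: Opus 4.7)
The plan is to identify $\partial_{\nu}$ with the edge homomorphism of the Hochschild--Serre spectral sequence attached to the extension
$$1\to I_{\nu}\to D_{\nu}\to G_{\KK_{\nu}}\to 1,$$
and then read off the claim from the resulting five-term sequence.

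First, I would use the hypothesis that $\nu$ is divisorial together with $\ell\neq p=\ch(\KK_{\nu})$ to pin down the structure of the inertia: the pro-$\ell$-part of $I_{\nu}$ is isomorphic to $\Z_{\ell}(1)$, and since $k=\bar{\F}_p$ contains all $\ell$-power roots of unity, the cyclotomic twist is canonically trivialized. This yields $\rH^1(I_{\nu},\Z/\ell^n)\cong \Z/\ell^n$, canonically, the action of $G_{\KK_{\nu}}$ on this group is trivial, and $\rH^q(I_{\nu},\Z/\ell^n)=0$ for all $q\geq 2$.

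Next, I would feed this into the Hochschild--Serre spectral sequence
$$E_2^{p,q}=\rH^p(G_{\KK_{\nu}},\rH^q(I_{\nu},\Z/\ell^n))\Rightarrow \rH^{p+q}(D_{\nu},\Z/\ell^n).$$
The vanishing above forces the spectral sequence to collapse onto the rows $q=0,1$, producing short exact sequences
$$0\to \rH^i(G_{\KK_{\nu}},\Z/\ell^n)\stackrel{\mathrm{inf}}{\lra}\rH^i(D_{\nu},\Z/\ell^n)\stackrel{\partial_{\nu}}{\lra}\rH^{i-1}(G_{\KK_{\nu}},\Z/\ell^n)\to 0,$$
where the right-hand arrow is precisely the residue map used to define unramified cohomology (this is Serre's description, which I would either cite or derive directly by unwinding the spectral sequence).

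The conclusion is then immediate from exactness. A class $\alpha_{\nu}\in\rH^i(D_{\nu})$ lies in $\Ker(\partial_{\nu})$ if and only if it comes from the inflation map, i.e.\ if and only if it is induced from a class in $\rH^i(G_{\KK_{\nu}})=\rH^i(D_{\nu}/I_{\nu})$; injectivity of inflation in the same sequence ensures this inducing class is unique, giving the well-definedness assertion. There is no real obstacle here: the only subtlety is to match the paper's residue map $\partial_{\nu}$ with the edge map of the spectral sequence, which is standard once the tame pro-$\ell$ inertia is recognized as procyclic.
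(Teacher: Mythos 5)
Your route via the Hochschild--Serre spectral sequence is a genuinely different packaging from the paper's, which instead observes that the extension $1\to I_\nu\to D_\nu\to G_{\KK_\nu}\to 1$ splits (noncanonically) as a \emph{direct} product -- the inertia is torsion-free, procyclic, central, and the cyclotomic action is trivial because $k=\bar{\F}_p$ -- and then applies the K\"unneth decomposition $\rH^*(D_\nu)=\rH^*(G_{\KK_\nu})\otimes\wedge^*\rH^1(I_\nu)$. Both arguments rest on the same inputs (wild inertia is pro-$p$ and hence invisible to $\Z/\ell^n$, tame pro-$\ell$ inertia is $\Z_\ell(1)$ with trivial twist, so $\rH^q(I_\nu,\Z/\ell^n)$ vanishes for $q\geq 2$), and in the end they yield the same short exact sequence.

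There is, however, one step you skip that the paper's approach handles automatically. The vanishing of $\rH^q(I_\nu)$ for $q\geq 2$ only tells you that the $E_2$-page lives in the two rows $q=0,1$; that by itself produces the usual two-row \emph{long} exact sequence
\[
\cdots\to \rH^{i-2}(G_{\KK_\nu},\rH^1(I_\nu))\xrightarrow{\ d_2\ }\rH^i(G_{\KK_\nu})\to\rH^i(D_\nu)\to\rH^{i-1}(G_{\KK_\nu},\rH^1(I_\nu))\xrightarrow{\ d_2\ }\rH^{i+1}(G_{\KK_\nu})\to\cdots,
\]
not the short exact sequences you assert. To collapse it you need $d_2=0$, which is exactly where the splitting of the extension enters: a section of $D_\nu\to G_{\KK_\nu}$ gives a retraction on cohomology, forcing inflation to be injective and hence $d_2$ to vanish. (Procyclicity of the kernel alone does not suffice -- think of $\Z/\ell^2\to\Z/\ell$, where the $d_2$ Bockstein is nonzero.) So either invoke the splitting explicitly, as the paper does, or cite the standard residue exact sequence for a complete discretely valued field with $\mu_{\ell^n}$ in the residue field; simply ``unwinding the spectral sequence'' without one of these leaves the degeneration unjustified.
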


\begin{proof}
The exact sequence
$$
1\ra I_{\nu}\ra D_{\nu}\ra G_{\KK_{\nu}}\ra 1
$$
admits a noncanonical splitting, i.e., $D_{\nu}$ is noncanonical direct product of 
$G_{\KK_{\nu}}= D_{\nu}/I_{\nu}$ with the corresponding inertia group, which is   
a torsion-free central procyclic subgroup of $D_{\nu}$.
Thus 
$$
\rH^*(D_{\nu}) = \rH^*(G_{\KK_{\nu}})\otimes \wedge^* \rH^1(I_{\nu}).
$$
We have 
$$
\rH^1(I_{\nu}, \Z/\ell^n)= \rH^0(I_{\nu}, \Z/\ell^n)= \Z/\ell^n
$$
and
$$
\wedge^*(\rH^1(I_{\nu}, \Z/\ell^n))= \rH^1(I_{\nu}, \Z/\ell^n) \oplus \rH^0(I_{\nu}, \Z/\ell^n).
$$
Thus
$$
\rH^i(D_{\nu}) = \rH^{i-1}(G^a_{\KK_{\nu}})\otimes \wedge^*(\rH^1(I_{\nu}))  
\oplus  \rH^i(G^a_{\KK_{\nu}})
$$
and the differential $\partial_{\nu}$ coincides with the projection onto the first summand.
Hence $\partial_{\nu}(\alpha)= 0$ is equivalent to $\alpha_{\nu}$ being induced from 
$G^a_{\KK_{\nu}}=D_{\nu}/I_{\nu}$.
\end{proof}

\

Let $K$ be a function field over $k=\bar{\F}_p$. 
By Theorem~\ref{thm:bk}, a class $\alpha\in \rH^i(G_K)$ is induced from $\alpha_K^a\in \G^a_K$, which in turn is induced from
a finite quotient $\G_K^a\ra G^a$. 
Applying Proposition~\ref{prop:main} we obtain an alternative characterization unramified classes.
\begin{coro}
\label{coro:coro}
A class $\alpha_K\in \rH^i(G_K)$ is unramified if and only if for every $\nu\in \Val_K$ the restriction of 
$\alpha_K^c:=\pi_a^*(\alpha_K^a)\in \rH^i(\G_K^c)$ to $\D^c_{\nu}$ is induced from $\D^c_{\nu}/\I^c_{\nu}$.
\end{coro}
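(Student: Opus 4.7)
The plan is to prove the two implications separately: the implication from the displayed condition to unramifiedness will be direct from the constructions, while the converse will require first translating the problem to a finite abelian quotient and then using Proposition~\ref{prop:main} to reduce arbitrary valuations to divisorial ones.

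For the $(\Leftarrow)$ direction, I would start from the identities $\alpha_K = \pi_c^*(\alpha_K^c)$ and note that the inclusion $D_\nu\hookrightarrow G_K$ composed with $\pi_c$ yields a homomorphism $D_\nu\to \D_\nu^c$ sending $I_\nu$ into $\I_\nu^c$. Consequently, the composition $D_\nu\to \D_\nu^c\to \D_\nu^c/\I_\nu^c$ factors through $D_\nu/I_\nu=G_{\KK_\nu}$. Hence, whenever $\alpha_K^c|_{\D_\nu^c}$ is the inflation of some $\beta\in\rH^i(\D_\nu^c/\I_\nu^c)$, the restriction $\alpha_K|_{D_\nu}$ is inflated from $G_{\KK_\nu}$, and Lemma~\ref{lemm:unram-class} immediately gives $\partial_\nu(\alpha_K)=0$ for every $\nu\in\DVal_K$.

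For the $(\Rightarrow)$ direction, I would fix a continuous surjection $\gamma_K\colon \G_K^a\twoheadrightarrow G^a$ onto a finite abelian $\ell$-group and a class $\alpha^a\in\rH^i(G^a)$ with $\alpha_K^a=\gamma_K^*(\alpha^a)$. Using Theorem~\ref{thm:bk} together with Lemma~\ref{lemm:how-used} (primitivity of $\delta_\nu^a$ and of $\I_\nu^a\subset\D_\nu^a$), the conclusion that $\alpha_K^c|_{\D_\nu^c}$ is induced from $\D_\nu^c/\I_\nu^c$ can be reformulated as: $\alpha^a|_{\gamma_K(\D_\nu^a)}$ is induced from $\gamma_K(\D_\nu^a)/\gamma_K(\I_\nu^a)$. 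For $\nu\in\DVal_K$ this statement follows directly from the hypothesis by applying Lemma~\ref{lemm:unram-class} and Bloch--Kato to both $K$ and $\KK_\nu$: the unramifiedness of $\alpha_K$ at $\nu$ translates precisely to the claim at the finite-abelian level.

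For a general $\nu\in\Val_K$ I would argue as follows. If $\gamma_K(\I_\nu^a)=0$ there is nothing to prove. Otherwise, for each primitive cyclic subgroup $I\subseteq\gamma_K(\I_\nu^a)$, Proposition~\ref{prop:main} produces a divisorial $\nu'\in\DVal_K$ with $I\subseteq\gamma_K(\I_{\nu'}^a)$ and $\gamma_K(\D_\nu^a)\subseteq\gamma_K(\D_{\nu'}^a)$. The divisorial case applied to $\nu'$, followed by restriction to $\gamma_K(\D_\nu^a)$, shows that $\alpha^a|_{\gamma_K(\D_\nu^a)}$ is induced from $\gamma_K(\D_\nu^a)/\bigl(\gamma_K(\D_\nu^a)\cap\gamma_K(\I_{\nu'}^a)\bigr)$, and in particular is killed by $I$. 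Running this over the finitely many primitive cyclic subgroups of $\gamma_K(\I_\nu^a)$ and intersecting the resulting kernels in $\gamma_K(\D_\nu^a)$, one obtains that $\alpha^a|_{\gamma_K(\D_\nu^a)}$ is in fact induced from the full quotient $\gamma_K(\D_\nu^a)/\gamma_K(\I_\nu^a)$.

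The main obstacle I expect is the divisorial case of the translation step: namely verifying rigorously that unramifiedness of $\alpha_K$ at a divisorial $\nu$ is equivalent to the stated inflation property of $\alpha^a$ on $\gamma_K(\D_\nu^a)$. This is where Theorem~\ref{thm:bk} (applied to both $K$ and $\KK_\nu$) and Lemma~\ref{lemm:how-used} must be combined carefully so that the kernels of $\pi_a^*$ and of the analogous residue-field map are correctly identified; once that identification is in place, the Proposition~\ref{prop:main} argument propagates the statement to all valuations without further difficulty.
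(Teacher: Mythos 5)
Your proposal matches the paper's approach: the paper's own proof (which is extremely terse) takes the divisorial case as given and cites Proposition~\ref{prop:main} to extend to all valuations, exactly the reduction you carry out in the final paragraph of the $(\Rightarrow)$ direction. You have filled in the details the paper leaves implicit — the easy $(\Leftarrow)$ direction via Lemma~\ref{lemm:unram-class}, the translation to the finite abelian quotient via Theorem~\ref{thm:bk} and Lemma~\ref{lemm:how-used}, and the intersection argument over primitive cyclic subgroups of $\gamma_K(\I^a_\nu)$ — and the obstacle you flag (matching the unramified condition at $\nu\in\DVal_K$ with the inflation condition for $\alpha^a_K$ on $\D^a_\nu$ via Bloch–Kato for $\KK_\nu$) is real but is precisely what the paper assumes when it writes ``Assuming that for every $\nu\in\DVal_K$, the restriction of $\alpha^a_K$ to $\D^a_\nu$ is induced from $\D^a_\nu/\I^a_\nu$.''
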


\begin{proof}
Assuming that for every $\nu\in \DVal_K$, the restriction of $\alpha_K^a$ to $\D^a_{\nu}$ 
is induced from $\D^a_{\nu}/\I^a_{\nu}$ we need to show this property for every $\nu$.  
This follows from Proposition~\ref{prop:main}.
\end{proof}

This allows to extend the notion of {\em unramified} to arbitrary valuations.

\

Combining the considerations above we obtain 
the notion of {\em unramified stable cohomology}
$$
\rH^*_{s,nr}(G)
$$
of a finite group $G$: a stable cohomology class $\alpha\in \rH^i_s(G)$ is 
unramified if and only if it is contained in the kernel of the
composition
$$
\rH^i_s(G) \ra \rH^i(G_K)\stackrel{\partial_{\nu}}{\lra} \rH^{i-1}(G_{\KK_{\nu}}),
$$
for every valuation $\nu\in \DVal_K$, where 
$K=k(V/G)$ for some faithful representation of $G$.
This does not depend on the choice of $V$, provided $\ell\neq {\rm char}(k)$. 
These groups are contravariant in $G$ and form a subring
$$
\rH^*_{s,nr}(G) \subset \rH^*_s(G).
$$ 
Furthermore:
\begin{itemize}
\item
If $V/G$ is stably rational then $\rH^i_{s,nr}(G) = 0$, for all $i\ge 2$. 
\item 
We have
$$
\rH^i_{s,nr}(G) \subseteq 
\rH^i_{s,nr}(\Syl_{\ell}(G))^{N_G(\Syl_{\ell}(G))}.
$$ 
\end{itemize}

\begin{rema}
In \cite{bpt}, we
proved that $\rH^i_{s,nr}(G)=0$, $i\ge 1$, 
for most quasi-simple groups of Lie type.
A complete result for quasi-simple groups and $i=2$ 
was obtained in \cite{kun}.

Note that 
the $\ell$-Sylow-subgroups of finite simple groups often 
have stably-rational fields of
invariants; 
this provides an alternative approach to 
our vanishing theorem.
\end{rema}

\begin{lemm}
\label{lemm:gc}
Let 
$$
1\ra Z\ra G^c\stackrel{\pi}{\lra} G^a \ra 1
$$
be a central extension of finite $\ell$-groups 
and $\alpha^a\in \rH^2_s(G^a)$. Then $\alpha^c:=\pi_a^*(\alpha)\in \rH^2_{s,nr}(G^c)$ if and only if for all 
$\sigma\in \Sigma:=\Sigma(G^c)$
the restriction of $\alpha^a$ to $\sigma$ is trivial, i.e., $\alpha^a\in \rR^2(\Sigma)$.  
\end{lemm}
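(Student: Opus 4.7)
The plan is to pass to the explicit model $K := k(V/G^c)$ supplied by Lemma~\ref{lemm:explicit}, for $V = \oplus_j V_j$ a faithful representation of $G^c$ built from a basis $\{\chi_j\}$ of $\Hom(Z,k^\times)$. By that lemma, $V/G^c$ is birational to $(\prod_j \P(V_j))/G^a \times T$ for a torus $T$; since $k(T)$ is purely transcendental, unramified cohomology is detected on $\P/G^a$ where $\P=\prod_j\P(V_j)$, and by Lemma~\ref{lemm:XP} the stable map $s_K$ factors through the cohomology of $\P^\circ/G^a$. The $G^a$-action on $\P$ from Lemma~\ref{lemm:action}---whose fixed loci are governed precisely by $\bar{\Sigma}(G^c)$---is the main geometric tool, combined with the characterization of unramifiedness by $\Delta$-pairs from Corollary~\ref{coro:coro}. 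Throughout I use $\rH^2_s(G^a)=\wedge^2\rH^1(G^a)$, so that $\alpha^a|_H=0$ for a subgroup $H\subseteq G^a$ is equivalent to $\alpha^a(g_1,g_2)=0$ for every $g_1,g_2\in H$.

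For the sufficiency direction, suppose $\alpha^a|_\sigma=0$ for all $\sigma\in\Sigma$ and fix any $\nu\in\DVal_K$. The image pair $(I,D)=(\gamma_K(\I^a_\nu),\gamma_K(\D^a_\nu))$ is a $\Delta$-pair in $G^a$: the preimages of $\I^a_\nu$ and $\D^a_\nu$ commute in $\G^c_K$ (hence in $G^c$), and $I$ is cyclic as the image of the procyclic $\I^a_\nu$. By Corollary~\ref{coro:coro} combined with Lemma~\ref{lemm:unram-class}, unramifiedness of $\pi_a^*(\alpha^a)$ at $\nu$ is equivalent to $\alpha^a|_D$ being induced from $D/I$, i.e., $\alpha^a(g,h)=0$ for all $g\in I$ and $h\in D$. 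For each such pair, $\langle g,h\rangle$ is a subgroup of the liftable $\langle I,h\rangle\in\bar{\Sigma}$, hence itself lies in $\bar{\Sigma}(G^c)$; it is either cyclic (in which case $\wedge^2$ vanishes trivially) or belongs to $\Sigma$, on which $\alpha^a$ vanishes by hypothesis. Thus $\pi_a^*(\alpha^a)$ is unramified at every divisorial $\nu$.

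For the necessity direction, suppose $\pi_a^*(\alpha^a)\in\rH^2_{s,nr}(G^c)$, fix $\sigma\in\Sigma$, and choose a basis $g_1,\ldots,g_r$ of $\sigma$; it suffices to show $\alpha^a(g_i,g_j)=0$ for all $i,j$. Each $\langle g_i\rangle$ is primitive cyclic in $\sigma$. Using that $\P^\sigma\neq\emptyset$ by Lemma~\ref{lemm:action}(2), together with a flag of subvarieties of $\P$ refining $\P^\sigma$ step by step (so that the successive quotient-inertias recover a basis of $\sigma$), one produces an Abhyankar valuation $\nu_0$ on $K$ whose image-inertia in $G^a$ contains $\sigma$ and whose image-decomposition contains $\sigma$. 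Proposition~\ref{prop:main}, applied to the primitive cyclic subgroup $\langle g_i\rangle\subseteq\gamma_K(\I^a_{\nu_0})$, then provides a divisorial $\nu_i\in\DVal_K$ with $\langle g_i\rangle\subseteq\gamma_K(\I^a_{\nu_i})$ and $\sigma\subseteq\gamma_K(\D^a_{\nu_i})$. The unramifiedness assumption, via Corollary~\ref{coro:coro}, forces $\alpha^a(g_i,h)=0$ for every $h\in\sigma$; specializing to $h=g_j$ finishes the argument.

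The main obstacle is the realization step in the necessity direction: constructing on $K=k(V/G^c)$ a valuation whose image-inertia in $G^a$ contains a prescribed $\sigma\in\Sigma$. This is the finite-quotient analog of Theorem~\ref{theo:main-valu}, and its proof requires combining the fixed-point stratification of $\P$ from Lemma~\ref{lemm:action} (which guarantees a flag of $\sigma$-fixed subvarieties through successive subgroups of $\sigma$) with the passage from Abhyankar to divisorial valuations provided by Proposition~\ref{prop:main}. Once this bridge between geometric fixed-loci and divisorial inertia groups is in place, the degree-$2$ combinatorics of $\wedge^2\rH^1(G^a)$ does the rest.
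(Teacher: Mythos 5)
The paper states Lemma~\ref{lemm:gc} without proof---it is Bogomolov's classical characterization of the unramified Brauer group of $V/G^c$, reformulated via the fan $\Sigma(G^c)$---so there is no in-paper argument to compare against. Your reconstruction is correct in outline.

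The sufficiency direction is clean. You correctly reduce, via Lemma~\ref{lemm:unram-class} and the $\Delta$-pair dictionary of Lemma~\ref{lemm:unram-extend}, to checking $\alpha^a(g,h)=0$ for $g\in I$, $h\in D$ where $(I,D)$ is the image of inertia/decomposition of a divisorial valuation; and you then observe that $\langle g,h\rangle\subseteq\langle I,h\rangle\in\bar\Sigma(G^c)$, so it is liftable and hence either cyclic (where $\wedge^2\rH^1$ vanishes) or in $\Sigma$ (where vanishing is the hypothesis). This is correct.

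For the necessity direction, your plan---realize $\sigma$ by a valuation and use Proposition~\ref{prop:main} to descend to divisorial ones, then exploit the $\wedge^2$ combinatorics---is the right one, and the endgame is sound. But the crucial realization step is only asserted: you write that ``one produces an Abhyankar valuation $\nu_0$'' with $\sigma\subseteq\gamma_K(\I^a_{\nu_0})$ and $\sigma\subseteq\gamma_K(\D^a_{\nu_0})$, and you yourself flag this as ``the main obstacle''. That step is the geometric heart of Bogomolov's theorem and cannot be left as a sketch. Concretely, one must exhibit a flag of irreducible subvarieties of $\P/G^a$ coming from components of $\P^{\sigma_1}\supset\cdots\supset\P^{\sigma_r}$ for a chain $\sigma_1\subset\cdots\subset\sigma_r=\sigma$ with cyclic quotients, chosen so that the successive inertia jumps of the associated algebraic valuation of the finite $G^a$-Galois extension $k(\P)/k(\P/G^a)$ sweep out exactly $\sigma$. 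Lemma~\ref{lemm:action} tells you the components exist, are products of projective subspaces, and are disjoint, but it requires additional care to arrange that the generic pointwise stabilizer along the flag increases by precisely the intended cyclic increments---a priori the group preserving a component can strictly exceed $\pi_a^{-1}(\sigma)$, as item (4) of that lemma shows with $H_\sigma$. Until this is carried out, the necessity half remains a plan rather than a proof. A minor supplementary point: a basis element $g_i$ of $\sigma$ need not be primitive in the possibly larger group $\gamma_K(\I^a_{\nu_0})$, so Proposition~\ref{prop:main} should be applied to the primitive cyclic hull of $\langle g_i\rangle$; this is harmless, as it still yields $\langle g_i\rangle\subseteq\gamma_K(\I^a_{\nu_i})$.
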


\begin{lemm}
\label{lemm:unram-extend}
%\label{lemm:ext-unram}
Let 
$$
\gamma_K:\G^a_K\ra G^a
$$ 
be a continuous surjective homomorphism and 
$\alpha_K^a=\gamma_K^*(\alpha^a)\in \rH^i(\G^a_K)$, for some
$\alpha^a \in \rH^i_s(G^a)$. Assume that $\alpha_K^a$ is unramified and 
let $(I,D)$ be an extendable $\Delta$-pair in $G^a$. 
Then $\alpha^a$ is unramified with respect to $(I,D)$. 

Conversely, if $\alpha^a$ 
is unramified with respect to every extendable $\Delta$-pair in $G^a$ then
$\alpha_K^a\in \rH^i_{nr}(\G_K^a)$.
\end{lemm}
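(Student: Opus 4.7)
The plan is to treat the two implications separately. For the forward direction, I use unramifiedness of $\alpha^a_K$ at a single valuation witnessing extendability of $(I,D)$; for the converse, I realize, for each $\nu\in\Val_K$, the pair $(\gamma_K(\I^a_\nu),\gamma_K(\D^a_\nu))$ as an extendable $\Delta$-pair in $G^a$ and invoke the hypothesis at it.

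For the forward implication, extendability of $(I,D)$ supplies $\nu\in\Val_K$ together with subgroups $I^a\subseteq\I^a_\nu$ and $D^a\subseteq\D^a_\nu$ with $\gamma_K(I^a)=I$ and $\gamma_K(D^a)=D$. Since $\alpha^a_K$ is unramified, the pro-$\ell$ version of Corollary~\ref{coro:coro} gives that $\alpha^a_K|_{\D^a_\nu}$ is induced from $\D^a_\nu/\I^a_\nu$; restricting further to $D^a$ and using $I^a\subseteq D^a\cap\I^a_\nu$, the class $\alpha^a_K|_{D^a}=(\gamma_K|_{D^a})^*(\alpha^a|_D)$ is induced from $D^a/I^a$. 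This reduces matters to a Cartesian-square property for the commuting square of surjections $D^a\twoheadrightarrow D$ and $D^a/I^a\twoheadrightarrow D/I$: since $\gamma_K(I^a)=I$, any character of $D$ whose $\gamma_K$-pullback vanishes on $I^a$ already vanishes on $I$, so on $\rH^1$ one has the identity $\gamma_K^*(\rH^1(D))\cap\rH^1(D^a/I^a)=\gamma_K^*(\rH^1(D/I))$. A standard basis argument, together with the identification $\rH^*_s=\wedge^*\rH^1$ from~\eqref{eqn:wehave}, lifts this property to exterior powers, yielding $\alpha^a|_D\in\psi(\rH^i_s(D/I))$ as required.

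For the converse, I fix $\nu\in\Val_K$ and set $I:=\gamma_K(\I^a_\nu)$ and $D:=\gamma_K(\D^a_\nu)$. The pair $(I,D)$ is extendable by construction, and it is a $\Delta$-pair whenever $D$ is noncyclic: $\I^a_\nu$ is central in $\D^a_\nu$ inside $\G^c_K$ (inertia is central in decomposition in the pro-$\ell$ quotient over $\bar{\F}_p$), so the preimage of $D$ in $G^c$ is abelian, whence liftability of $I$ and of each $\langle I,\delta\rangle$ with $\delta\in D$ follows. The hypothesis then yields $\alpha^a|_D=\psi(\beta)$ for some $\beta\in\rH^i_s(D/I)$, and pulling back through the diagram with verticals $\D^a_\nu\twoheadrightarrow\D^a_\nu/\I^a_\nu$ and $D\twoheadrightarrow D/I$ presents $\alpha^a_K|_{\D^a_\nu}$ as the image of $\bar\gamma_K^*(\beta)\in\rH^i_s(\D^a_\nu/\I^a_\nu)$, hence it factors through $\D^a_\nu/\I^a_\nu$.

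The main obstacle is the Cartesian-square step in the forward direction, where the wedge-preservation of Cartesian squares over $\Z/\ell^n$ must be verified carefully in the presence of possible torsion in characters of finite abelian $\ell$-groups; this is what forces a basis-level argument rather than formal diagrammatic manipulation. The converse also needs separate treatment when $D=\gamma_K(\D^a_\nu)$ is cyclic: for $i\ge 2$ the stable cohomology $\rH^i_s(D)$ vanishes and there is nothing to check, while for $i=1$ one invokes Proposition~\ref{prop:main} to replace $\nu$ by a divisorial valuation whose decomposition image strictly enlarges $D$ to a noncyclic subgroup, reducing to the case already handled.
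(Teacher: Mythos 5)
Your write-up goes further than the paper, which only proves the \emph{converse} direction (its proof opens with ``For the converse\ldots'' and stops there). I'll comment on the two halves separately.

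\textbf{Converse.} Your overall route matches the paper's: fix $\nu$, set $I=\gamma_K(\I^a_\nu)$, $D=\gamma_K(\D^a_\nu)$, note that this is extendable by construction, and conclude via a diagram chase (the paper packages this chase as Lemma~\ref{lemm:unram-delta}). However, your justification for why $(I,D)$ is a $\Delta$-pair is wrong as stated: you claim ``the preimage of $D$ in $G^c$ is abelian.'' That would say $D$ itself is liftable, which is generally false — the preimage of $\D^a_\nu$ in $\G^c_K$ is $\D^c_\nu$, which is \emph{not} abelian (it surjects onto $\G^c_{\KK_\nu}$). What is actually true, and what Definition~\ref{defn:delta} requires, is weaker: over $\bar{\F}_p$ the cyclotomic character is trivial, so $\I^a_\nu$ is \emph{central} in $\D^c_\nu$; hence the preimages of $I$ and of each $\langle I,\delta\rangle$ with $\delta\in D$ are abelian (they lift to subgroups containing only inertia and a single decomposition element, which commute), but $D$ itself need not lift. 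Also, your separate treatment of cyclic $D$ at $i=1$ is superfluous: the theorem is stated with $i>1$, and for $i\ge 2$ one has $\rH^i_s(\text{cyclic})=0$, which is how the paper absorbs this case into Lemma~\ref{lemm:unram-delta}.

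\textbf{Forward direction.} Here the paper offers no argument, so there is nothing to compare against; you are attempting to fill a gap. Your reduction to a ``Cartesian-square'' statement for $\rH^1$ and then an exterior-power lift is the natural plan, and your $\rH^1$-level observation is correct. But the exterior-power step is a genuine unresolved gap, as you yourself flag. Two things are needed and not supplied: (i) injectivity of $\gamma_K^*:\wedge^i\rH^1(D)\to\wedge^i\rH^1(D^a)$ — true on $\rH^1$ by surjectivity of $\gamma_K$, but not automatic on wedge powers of $\Z/\ell^n$-modules with torsion (here $\rH^1(D)=\Hom(D,\Z/\ell^n)$ will have elements of varying orders once $D$ has cyclic summands of order $<\ell^n$, even though $\rH^1(D^a)$ is free over $\Z/\ell^n$ since $D^a\subset\G^a_K$ is free pro-$\ell$); and (ii) the compatibility $\wedge^i M_1\cap\wedge^i M_2=\wedge^i(M_1\cap M_2)$ inside $\wedge^i M$, which holds for vector spaces but requires an explicit construction of compatible generating sets over $\Z/\ell^n$. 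As the proposal stands, the forward direction is an outline with an acknowledged hole at precisely the technical heart of the argument.
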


\begin{proof}

For the converse, for $\nu\in \DVal_K$, let $D:=\gamma_K(\D^a_{\nu})$ and $I:=\gamma_K(\I^a_{\nu})$. Then either $D$ is cyclic or 
$(D,I)$ is an extendable $\Delta$-pair in $G^a$. We have a commutative diagram

\

\centerline{
\xymatrix{
\rH^i_s(G^a)   \ar[d]_{\gamma_K^*} \ar[r]^{\phi} & \rH^i_s(D)      \ar[d]  & \ar[l]_{\psi} \ar[d] \rH^i_s(D/I)\\
\rH^i(\G^a_K)         \ar[r] & \rH^i(\D^a_{\nu})         & \ar[l] \rH^i(\D^a_{\nu}/\I^a_{\nu})
}
}

\

\noindent
In either case, $\alpha^a_K$ is unramified with respect to $\nu$, by Lemma~\ref{lemm:unram-delta}. 
\end{proof}

\section{Main theorem}
\label{sect:proof}

\begin{thm}
\label{thm:mainn}
Let $K=k(X)$ be a function field over $k=\bar{\F}_p$ of $\trdeg_k(K)\ge 2$ and 
$\alpha_K\in \rH^i_{nr}(G_K)$, with $\ell\neq p$ and $i>1$.
Then there exist a continuous homomorphism $G_K\ra \tilde{G}^a$ onto a finite $\ell$-group, 
fitting into a diagram

\centerline{
\xymatrix{
        &         &  G_K \ar[d] &   & \\
1\ar[r] & \tilde{Z}\ar[r] &  \tilde{G}^c \ar[r] & \tilde{G}^a \ar[r] & 1
}
}

\noindent
and a class $\tilde{\alpha}^a\in \rH^i(\tilde{G}^a)$ such that 
\noindent
\begin{enumerate}
\item $\alpha_K$ is induced from $\tilde{\alpha}^a$, 
\item $\tilde{\alpha}^c:=\pi_a^*(\tilde{\alpha}^a)\in \rH^*_{s,nr}(\tilde{G}^c)$.
\end{enumerate}
Conversely, every $\alpha_K\in \rH^i(G_K)$ 
induced from $\wedge^*(\rH^1(\tilde{G}^a))$ 
and unramified on some $\tilde{G}^c$ as above is in $\rH^i_{nr}(G_K)$.
\end{thm}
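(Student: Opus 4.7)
My plan is to argue the forward direction in three steps: descend $\alpha_K$ to a finite abelian quotient using Bloch--Kato, refine this quotient so that all its $\Delta$-pairs become ``controlled'', and then translate the resulting $\Delta$-pair unramifiedness into stable unramifiedness on a canonical central extension. The converse will be a direct application of the three-case Lemma~\ref{lemm:unram-delta}.

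The first step invokes Theorem~\ref{thm:bk}: by Bloch--Kato the unramified class $\alpha_K\in \rH^i(G_K)$ is induced from a class $\alpha_K^a\in \rH^i(\G^a_K)=\wedge^i\rH^1(\G^a_K)$, and since this class is described by finitely many characters of $\G^a_K$ it factors through a continuous surjection $\gamma_K\colon \G^a_K\to G^a$ onto a finite abelian $\ell$-group, giving $\alpha^a\in \rH^i_s(G^a)=\wedge^i\rH^1(G^a)$ with $\gamma_K^*(\alpha^a)=\alpha_K^a$. Equipping $G^a$ with a maximal finite central extension $G^c$ as in Lemma~\ref{lemm:canonical}, the unramifiedness of $\alpha_K$ translates, via Lemma~\ref{lemm:unram-extend}, into the statement that $\alpha^a$ is unramified with respect to every \emph{extendable} $\Delta$-pair $(I,D)$ of $G^a$.

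For the second step I would apply Proposition~\ref{prop:delta-pairs} to $\gamma_K$ to obtain an intermediate factorization $\G^a_K\to \tilde G^a\stackrel{\gamma}{\to}G^a$ with finite $\tilde G^a$ such that no $\Delta$-pair of $\tilde G^a$ surjects onto a nonextendable $\Delta$-pair of $G^a$. Set $\tilde\alpha^a:=\gamma^*(\alpha^a)$ and let $\tilde G^c$ be the corresponding canonical central extension of $\tilde G^a$ supplied by Corollary~\ref{coro:intermediate-factor}. I want to verify that $\tilde\alpha^a$ is unramified on every $\Delta$-pair $(\tilde I,\tilde D)$ of $\tilde G^a$ by the three cases of Lemma~\ref{lemm:unram-delta}: if $\gamma(\tilde I)=0$ or $\gamma(\tilde D)$ is cyclic there is nothing to prove; otherwise, I would first verify that $(\gamma(\tilde I),\gamma(\tilde D))$ is itself a $\Delta$-pair of $G^a$ (liftability passes through the surjection $\tilde G^c\to G^c$ of Corollary~\ref{coro:intermediate-factor}, because the preimage in $G^c$ of $\gamma(\tilde I)$ coincides with the image of the abelian preimage of $\tilde I$ in $\tilde G^c$), then invoke Proposition~\ref{prop:delta-pairs} to conclude that this image pair is extendable, hence unramified for $\alpha^a$ by step one, and finally apply the third clause of Lemma~\ref{lemm:unram-delta} to transfer unramifiedness to $(\tilde I,\tilde D)$. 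Defining $\tilde\alpha^c:=\tilde\pi_a^*(\tilde\alpha^a)\in \rH^i(\tilde G^c)$ gives the class required in the theorem.

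The third step is the translation into stable unramifiedness on $\tilde G^c$. Using the faithful representation $V=\oplus_j V_j$ and the projective model $\P=\prod_j \P(V_j)$ of Lemma~\ref{lemm:action}, together with Lemma~\ref{lemm:XP}, I would identify divisorial valuations of the function field $k(\P/\tilde G^a)$ with $\Delta$-pairs of $\tilde G^a$ via the projections of their decomposition and inertia subgroups (using Proposition~\ref{prop:main} to reduce from arbitrary to divisorial valuations); Corollary~\ref{coro:coro} then rewrites the vanishing of all boundary maps $\partial_\nu$ applied to $\tilde\alpha^c$ as unramifiedness on the corresponding $\Delta$-pairs, which is exactly what step two delivers. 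The converse direction is direct: if $\tilde\alpha^c\in \rH^i_{s,nr}(\tilde G^c)$, then unramifiedness on every $\Delta$-pair of $\tilde G^a$ -- in particular on every $(I,D)=(\gamma(\I^a_\nu),\gamma(\D^a_\nu))$ coming from a valuation of $K$ via Proposition~\ref{prop:main} -- combined once more with Lemma~\ref{lemm:unram-delta} yields vanishing of $\partial_\nu(\alpha_K)$ for all $\nu\in\DVal_K$, so $\alpha_K\in \rH^i_{nr}(G_K)$. I expect the main obstacle to lie in the third step, namely cleanly matching geometric divisorial valuations of $k(\P/\tilde G^a)$ with combinatorial $\Delta$-pairs of $\tilde G^a$ in both directions, and checking that the maximality of $\tilde Z$ in Lemma~\ref{lemm:canonical} is precisely what guarantees that the stable-unramified condition on $\tilde G^c$ is detected by $\Delta$-pair data alone rather than by a finer invariant of the extension class.
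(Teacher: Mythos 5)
Your Step~1 is where the argument breaks. You write that ``the unramifiedness of $\alpha_K$ translates, via Lemma~\ref{lemm:unram-extend}, into the statement that $\alpha^a$ is unramified with respect to every extendable $\Delta$-pair $(I,D)$ of $G^a$,'' and then Steps~2--3 are just bookkeeping. But look at the paper's proof of Lemma~\ref{lemm:unram-extend}: it begins ``For the converse\dots'' and only proves the converse direction. The forward direction you are invoking is precisely the content of Lemma~\ref{lemm:impossible}, and it is \emph{not} automatic for an arbitrary finite abelian quotient $\gamma_K:\G^a_K\to G^a$ through which $\alpha_K^a$ factors. The obstruction is concrete: if $(I,D)$ is extendable via a valuation $\nu$, one only controls the restriction of $\alpha_K^a$ to $\D^a_\nu$, and the pullback map $\rH^i_s(\gamma_K(\D^a_\nu),\Z/\ell^n)\to\rH^i(\D^a_\nu,\Z/\ell^n)$ can have a large kernel when $\ell^n$ does not kill $\G^a_K/\D^a_\nu$ (already $\Z/\ell\wedge\Z/\ell\subset\rH^2$ can die under pullback to $\Z_\ell^2$ with $\Z/\ell^2$ coefficients), so ``$\alpha_K^a|_{\D^a_\nu}$ induced from $\D^a_\nu/\I^a_\nu$'' does not descend to ``$\alpha^a|_D$ induced from $D/I$.'' If the forward direction of Lemma~\ref{lemm:unram-extend} were that easy, the paper's Sections~\ref{sect:smooth} and~\ref{sect:singular} (Bloch--Ogus local representation, de Jong--Gabber alterations, and equivariant resolution of singularities) would be entirely superfluous, which they are not --- they are the heart of the proof.

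What you have correctly reproduced is the ``easy half'' of the paper's dichotomy, namely Lemma~\ref{lemm:stable}: \emph{given} a finite quotient $G^a$ on which $\alpha^a$ is unramified on every extendable $\Delta$-pair, Proposition~\ref{prop:delta-pairs} lets you enlarge to $\tilde G^a$ so that every $\Delta$-pair either collapses (first two cases of Lemma~\ref{lemm:unram-delta}) or surjects onto an extendable one (third case), and that yields $\tilde\pi_a^*(\tilde\alpha^a)\in\rH^i_{s,nr}(\tilde G^c)$. Your converse direction is also fine: it uses only the proved half of Lemma~\ref{lemm:unram-extend} together with Lemma~\ref{lemm:unram-delta} and Proposition~\ref{prop:main}. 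What is missing is the construction of the initial good quotient: you need to show $\alpha_K$ can be represented by finitely many classes on Zariski charts (Bloch--Ogus/Lemma~\ref{lemm:bloch-ogus}), push each through Proposition~\ref{prop:bk-apply} to finite abelian groups, glue via Corollary~\ref{coro:indd} into a single $G^c$, and --- in the absence of smooth models in positive characteristic --- reduce to this case via alterations (Section~\ref{sect:singular}) using trace and Theorem~\ref{thm:resolve}. You flagged the ``main obstacle'' in your Step~3, but it is actually in your Step~1.
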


\

In this section we begin the proof of Theorem~\ref{thm:mainn}, reducing it 
to geometric statements addressed in Sections~\ref{sect:smooth} and \ref{sect:singular}.

\

Fix an unramified class 
$$
\alpha_K\in \rH^i_{nr}(G_K)\subset \rH^i(G_K).
$$
By Theorem~\ref{thm:bk}, we have a surjection 
$$
\pi^*: \rH^i(\G^a_K)\ra \rH^i(G_K), 
$$
let $\alpha_K^a\in \rH^i(\G^a_K)$ be the unramified class such that $\pi^*(\alpha_K^a)=\alpha_K$. 
Let 
$$
\gamma_K: \G_K^a\ra G^a
$$
be a continuous quotient onto a finite abelian $\ell$-group such that 
$\alpha_K^a$ is induced from a class $\alpha^a\in \rH^i_s(G^a)=\wedge^i(\rH^1(G^a))$.   
We have a diagram of central extensions:

\

\centerline{
\xymatrix{ 
1\ar[r] & \cZ_K \ar[r] & \G^c_K \ar@{>>}[d] \ar[r] & \G^a_K \ar@{>>}[d]^{\gamma_K} \ar[r]  & 1 \\
1\ar[r] & Z \ar[r]     & G^c\ar[r]^{\pi_a}                       & G^a    \ar[r]                      & 1 
}
}

\

\noindent
where the lower row is uniquely defined, up to isoclinism, as in Lemma~\ref{lemm:canonical}. 
The group $G^a$ might be too small, i.e., it may happen that 
$$
\pi^*_a(\alpha^a)\notin \rH^i_{s,nr}(G^c).
$$
Our goal is to pass to an intermediate finite quotient $\G_K^a\ra \tilde{G}^a$
fitting into a commutative diagram below

\

\centerline{
\xymatrix{ 
1\ar[r] & \cZ_K\ar[r] & \G^c_K \ar[d] \ar[r]     & \G^a_K \ar[d]^{\tilde{\gamma}_K} \ar[r]  & 1 \\
1\ar[r] & \tilde{Z} \ar[r]  & \tilde{G}^c \ar[d] \ar[r] & \tilde{G}^a \ar[d]^{\gamma} \ar[r]      & 1 \\
1\ar[r] & Z \ar[r]        &  G^c       \ar[r]                    & G^a \ar[r]                                & 1 
}
}

\

\noindent 
where the vertical arrows are surjections onto finite $\ell$-groups, and such that 
$\alpha^a_K$ is induced from a class $\tilde{\alpha}^a\in \rH^*(\tilde{G}^a)$ with 
$$
\tilde{\pi}_a^*(\tilde{\alpha}^a)\in \rH^i_{s,nr}(\tilde{G}^c).
$$ 

\

There are two possibilities:
\begin{enumerate}
\item There exists a finite quotient $G_K\ra G^a$ 
such that $\alpha_K$ is induced from $\alpha^a\in \rH^i_s(G^a)$
which is unramified with respect to every 
extendable $\Delta$-pair $(I,D)$ in $G^a$. 
This case is treated in Lemma~\ref{lemm:stable}.
\item On {\em every} finite quotient, the class $\alpha^a$ inducing $\alpha_K$
is ramified on {\em some} extendable $\Delta$-pair $(I, D)$. 
This possibility is eliminated by Lemma~\ref{lemm:impossible}. 
\end{enumerate}

\

\begin{lemm}
\label{lemm:stable}
Assume that $\alpha^a\in \rH^i_s(G^a)$ is unramified with respect to every 
extendable $\Delta$-pair $(I,D)$ in $G^a$. Then there is a factorization 
\begin{equation}
\label{eqn:factor}
\G^a_K \stackrel{\tilde{\gamma}_K}{\lra} \tilde{G}^a\stackrel{\gamma}{\lra} G^a, \quad
\gamma_K=\gamma\circ \tilde{\gamma}_K, 
\end{equation}
with finite $\tilde{G}^a$, 
such that
$$
\pi_a^*(\tilde{\alpha}^a)\in \rH^i_{s,nr}(\tilde{G}^c), \quad \quad  
\tilde{\alpha}^a:= \gamma^*(\alpha^a).
$$
\end{lemm}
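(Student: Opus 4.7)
The plan is to take $\tilde{G}^a$ to be a finite quotient of $\G^a_K$ produced by Proposition~\ref{prop:delta-pairs} applied to $\gamma_K$, yielding a factorization $\gamma_K = \gamma \circ \tilde{\gamma}_K$ with the property that no $\Delta$-pair in $\tilde{G}^a$ surjects onto a nonextendable $\Delta$-pair in $G^a$. Setting $\tilde{\alpha}^a := \gamma^*(\alpha^a)$, the goal is to establish the \emph{stronger} statement that $\tilde{\alpha}^a$ is unramified on \emph{every} $\Delta$-pair $(\tilde{I}, \tilde{D})$ in $\tilde{G}^a$. Via Lemma~\ref{lemm:unram-extend} applied to $\tilde{K} := k(V/\tilde{G}^c)$ for a faithful representation $V$ as in Lemma~\ref{lemm:explicit}, combined with the Bloch--Kato surjection $\pi^*\colon \rH^i(\G^a_{\tilde K}) \to \rH^i(G_{\tilde K})$ of Theorem~\ref{thm:bk}, this will imply that the pullback of $\tilde{\pi}_a^*(\tilde{\alpha}^a) \in \rH^i(\tilde{G}^c)$ to $G_{\tilde{K}}$ is unramified in every divisorial valuation, i.e., $\tilde{\pi}_a^*(\tilde{\alpha}^a) \in \rH^i_{s,nr}(\tilde{G}^c)$.

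Fix a $\Delta$-pair $(\tilde{I}, \tilde{D})$ in $\tilde{G}^a$ and write $(I^*, D^*) := (\gamma(\tilde{I}), \gamma(\tilde{D})) \subseteq G^a$. If $I^* = 0$ or $D^*$ is cyclic, the first two cases of Lemma~\ref{lemm:unram-delta} yield unramifiedness of $\tilde{\alpha}^a$ on $(\tilde{I}, \tilde{D})$ directly. Otherwise, I claim $(I^*, D^*)$ is itself a $\Delta$-pair in $G^a$. The key subclaim is that liftability is preserved by $\gamma$: for any liftable $\tilde{\sigma} \subseteq \tilde{G}^a$ with preimage $\tilde{H} \subseteq \tilde{G}^c$, a direct element chase in the commutative diagram of Corollary~\ref{coro:intermediate-factor} gives
\[
\pi_a^{-1}(\gamma(\tilde{\sigma})) \;=\; \gamma^c(\tilde{H}) \cdot Z,
\]
and this is abelian since $\gamma^c(\tilde{H})$ is abelian (as the image of the abelian group $\tilde{H}$) and $Z$ is central in $G^c$. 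Applying this to $\tilde{I}$ and to each $\langle \tilde{I}, \tilde{\delta} \rangle$ for $\tilde{\delta} \in \tilde{D}$ establishes the $\Delta$-pair property for $(I^*, D^*)$. By the choice of $\tilde{G}^a$ via Proposition~\ref{prop:delta-pairs}, this $\Delta$-pair must be extendable; the hypothesis of the lemma then says $\alpha^a$ is unramified on $(I^*, D^*)$, and the third case of Lemma~\ref{lemm:unram-delta} transports unramifiedness back to $\tilde{\alpha}^a$ on $(\tilde{I}, \tilde{D})$.

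The main obstacle I expect lies in the reduction of the opening paragraph: matching the definition of $\rH^i_{s,nr}(\tilde{G}^c)$, phrased in terms of divisorial valuations on $\tilde{K}$, with unramifiedness of $\tilde{\alpha}^a$ on $\Delta$-pairs of $\tilde{G}^a$. This requires carefully weaving together Theorem~\ref{thm:bk}, the geometric model $\tilde{X} \dashrightarrow \mathbb P/\tilde{G}^a$ from Lemma~\ref{lemm:XP}, and the $\Delta$-pair characterization of Lemma~\ref{lemm:unram-extend}, keeping track that the notion of ``extendability over $\tilde{K}$'' is weaker than the universal unramifiedness on all $\Delta$-pairs we actually verify. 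Once that translation is in place, the combinatorial core of paragraph two closes the argument.
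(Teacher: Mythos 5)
Your proposal is correct and follows the paper's own proof in all essentials: take $\tilde{G}^a$ from Proposition~\ref{prop:delta-pairs}, run the dichotomy on whether $\gamma(\tilde{I})=0$ or $\gamma(\tilde{D})$ is cyclic, and in the remaining case apply Lemma~\ref{lemm:unram-delta} together with the hypothesis that $\alpha^a$ is unramified on extendable $\Delta$-pairs. The element chase $\pi_a^{-1}(\gamma(\tilde\sigma)) = \gamma^c(\tilde H)\cdot Z$ that you supply to show liftability is preserved by $\gamma$ (so that $(\gamma(\tilde I),\gamma(\tilde D))$ really is a $\Delta$-pair in $G^a$) is a correct justification of a step the paper's ``Thus'' leaves implicit, and the final translation to $\rH^i_{s,nr}(\tilde{G}^c)$ is handled at the same level of detail as the paper itself.
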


\begin{proof}
Let $\tilde{G}^a$ be the quotient constructed in 
Proposition~\ref{prop:delta-pairs}, i.e., if $(I,D)$ 
is not an extendable $\Delta$-pair in 
$G^a$ then {\em no} $\Delta$-pair $(\tilde{I}, \tilde{D})$ 
surjects onto $(I,D)$. 
Thus, for each $\Delta$-pair $(\tilde{I}, \tilde{D})$ in $\tilde{G}^a$ one of the following holds:
\begin{itemize}
\item 
either $\gamma(\tilde{I})=0$ or $\gamma(\tilde{D})$ cyclic,
\item 
or $(\gamma(\tilde{I}),\gamma(\tilde{D}))$ is an extendable $\Delta$-pair in $G^a$.  
\end{itemize}
By Lemma~\ref{lemm:unram-delta}, combined with Lemma~\ref{lemm:unram-extend},  
$\tilde{\alpha}^a:=\gamma^*(\alpha^a)$ is unramified with respect to $(I,D)$.
\end{proof}

\ 

\begin{lemm}
\label{lemm:impossible}
There exists a finite quotient $G_K\ra G^a$ such that $\alpha^a\in \rH^i(G^a)$ induces
$\alpha_K$ and is unramified on every extendable $\Delta$-pair $(I,D)$ in $G^a$. 
\end{lemm}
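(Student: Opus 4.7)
The plan is to argue by contradiction, in the same spirit as the proof of Proposition~\ref{prop:delta-pairs}. Suppose that for every finite abelian quotient $\gamma_\iota \colon \G^a_K \twoheadrightarrow G^a_\iota$ through which $\alpha^a_K$ factors, with inducing class $\alpha^a_\iota \in \rH^i(G^a_\iota)$, there exists an extendable $\Delta$-pair $(I_\iota, D_\iota)$ in $G^a_\iota$ on which $\alpha^a_\iota$ is ramified. A direct functoriality check shows that if $G^a_{\iota'} \twoheadrightarrow G^a_\iota$ is a refinement and $(I_{\iota'}, D_{\iota'})$ is a ramified extendable $\Delta$-pair in $G^a_{\iota'}$, then its image under $\gamma_{\iota\iota'}$ is again such a pair: extendability is preserved because the lift in $\G^a_K$ is independent of the quotient, and if $\alpha^a_\iota|_{D_\iota}$ were pulled back from a class on $D_\iota/I_\iota$, then since $D_{\iota'} \to D_\iota \to D_\iota/I_\iota$ kills $I_{\iota'}$, the pullback $\alpha^a_{\iota'}|_{D_{\iota'}}$ would factor through $D_{\iota'}/I_{\iota'}$. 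Since each $G^a_\iota$ has only finitely many $\Delta$-pairs, the sets $\mathrm{Bad}(\iota)$ of ramified extendable $\Delta$-pairs form an inverse system of nonempty finite sets and therefore admit a coherent section $\{(I_\iota, D_\iota)\}_\iota$.

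Next, I would run the compactness argument from the second half of the proof of Proposition~\ref{prop:delta-pairs} at the level of lifts. Extendability produces, for each $\iota$, subgroups $I^a_\iota \subseteq \I^a_{\nu_\iota}$ and $D^a_\iota \subseteq \D^a_{\nu_\iota}$ in $\G^a_K$ mapping onto $I_\iota, D_\iota$; passing to compactly converging choices and invoking the theory of liftable subgroups from \cite[Section 6]{bt-commute} and \cite[Corollary 4.3]{bt1}, I obtain a pair of closed subgroups $I^a \subseteq D^a$ of $\G^a_K$ and a single valuation $\nu \in \Val_K$ such that $I^a \subseteq \I^a_\nu$, $D^a \subseteq \D^a_\nu$, and $\gamma_\iota(I^a) = I_\iota$, $\gamma_\iota(D^a) = D_\iota$ for every $\iota$; in particular $D^a = \projlim_\iota D_\iota$ and $I^a = \projlim_\iota I_\iota$.

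Since $\alpha_K \in \rH^i_{nr}(G_K)$, Corollary~\ref{coro:coro} yields that $\alpha^a_K|_{\D^a_\nu}$ is pulled back from $\D^a_\nu/\I^a_\nu$. Restricting to $D^a$ and using $I^a \subseteq D^a \cap \I^a_\nu$, the class $\alpha^a_K|_{D^a}$ is the pullback of some $\beta \in \rH^i(D^a/I^a)$. Since the coefficients are finite, $\beta$ factors through a finite quotient of the profinite group $D^a/I^a = \projlim_\iota D_\iota/I_\iota$; for $\iota$ sufficiently fine in our system this factorization descends, so $\beta$ lifts to a class $\bar\beta \in \rH^i(D_\iota/I_\iota)$. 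The pullback of $\bar\beta$ to $\rH^i(D_\iota)$ and the class $\alpha^a_\iota|_{D_\iota}$ have the same image in $\rH^i(D^a) = \varinjlim_\iota \rH^i(D_\iota)$, hence already agree after a further refinement of $\iota$. At that stage $\alpha^a_\iota|_{D_\iota}$ is induced from $D_\iota/I_\iota$, contradicting $(I_\iota, D_\iota) \in \mathrm{Bad}(\iota)$.

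I expect the compactness step producing the coherent lift $(I^a, D^a)$ inside a single decomposition pair $(\I^a_\nu, \D^a_\nu)$ to be the main obstacle, since it is precisely the place where the valuation-theoretic input from \cite{bt-commute} and \cite{bt1} is indispensable; any weakening of the liftable-pair theorem would break the argument. By contrast, the final descent of the inducing class $\beta$ to a finite level is a soft continuity step in the direct limit of cohomology groups with finite coefficients and should present no essential difficulty.
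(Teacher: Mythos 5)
Your proof takes a genuinely different route from the paper's. The paper proves Lemma~\ref{lemm:impossible} geometrically: in Section~\ref{sect:smooth} it invokes the Bloch--Ogus theorem to write $\rH^i_{nr}(G_K)=\rH^0_{\mathrm{Zar}}(X,\mathcal H^i_{et}(X))$ on a smooth projective model, covers $X$ by affine opens on which the class is realized by abelian quotients (Proposition~\ref{prop:bk-apply}), and uses the geometric fact that each center $\mathfrak c_X(\nu)$ lies in some chart (Lemma~\ref{lemm:trivial-nu}) to deduce unramifiedness on all extendable pairs, gluing via Corollary~\ref{coro:indd}; Section~\ref{sect:singular} then removes the smoothness hypothesis with de~Jong--Gabber alterations, trace maps, and the equivariant resolution theorem of~\cite{resolve}. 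Your argument sidesteps all of this geometry: it runs a profinite compactness argument directly on the tower of finite abelian quotients, using only the valuation-theoretic characterization of Corollary~\ref{coro:coro} and continuity of cohomology with finite coefficients. If correct, this would be a substantial simplification, which is itself reason for caution given the length the authors go to.

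The logical skeleton is sound and the final descent step (continuity of $\rH^i$ with finite coefficients along the cofinal system $\{D_\iota\}$) is indeed routine, as you say. But there are two places you have glossed over. First, for $\{\mathrm{Bad}(\iota)\}$ to be an inverse system you must check not just that ramification is inherited by the image, but that the image is in fact an extendable $\Delta$-pair: that $\gamma(D_{\iota'})$ is noncyclic (true because the inducing classes lie in $\wedge^i\rH^1$, which vanishes on cyclic groups for $i\ge2$, so a cyclic image would already be unramified), that $\gamma(I_{\iota'})\in\bar\Sigma(G^c_\iota)$, and that condition (iii) of Definition~\ref{defn:delta} persists; these all hold via the surjection of canonical central extensions from Corollary~\ref{coro:intermediate-factor}, but none of it is automatic and you should spell it out. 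Second --- and this is the load-bearing step, as you yourself flag --- it is not enough to take $I^a=\projlim I_\iota$, $D^a=\projlim D_\iota$ and quote the liftable-pair theory. That theory produces a valuation from a pair of closed subgroups built as intersections and unions of finitely many compatible elements of $\Sigma(\G^c_K)$; to apply it you must redo the construction inside the proof of Proposition~\ref{prop:delta-pairs}, choosing for each $\iota$ a compatible family $\tilde\sigma_{\iota,1},\dots,\tilde\sigma_{\iota,n}$ of liftable subgroups with $I_\iota=\cap_j\tilde\sigma_{\iota,j}$, $D_\iota=\cup_j\tilde\sigma_{\iota,j}$ (with $n$ bounded by $|D_0/I_0|$), pass each $j$-chain to a closed $\sigma_{K,j}\in\Sigma(\G^c_K)$ by compactness, and only then invoke~\cite[Section~6]{bt-commute} and~\cite[Corollary~4.3]{bt1} for the existence of $\nu$. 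Until that is done carefully, the key assertion that $(I^a,D^a)$ sits inside a single $(\I^a_\nu,\D^a_\nu)$ remains a gap, not a soft step.
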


The proof of this lemma is presented in Section~\ref{sect:smooth}, in  
the case when $K$ admits a smooth projective model; 
a reduction to the smooth case is postponed until Section~\ref{sect:singular}.

\section{The smooth case}
\label{sect:smooth}

Let $X$ be a smooth projective irreducible variety over an 
algebraically closed field $k$ 
with function field $K=k(X)$. By the Bloch-Ogus theorem, 
there is an isomorphism
$$
\rH^i_{nr}(G_K)= \rH^0_{Zar}(X, \mathcal H^i_{et}(X)),
$$
where $\mathcal H^i_{et}$ is an \'etale cohomology sheaf 
(see also Theorem 4.1.1 in \cite{ct-lectures}).
In particular, a class $\alpha_K \in \rH^i_{nr}(G_K)$ can be represented by a
finite collection of classes $\{ \alpha_n\}_{n\in N}$, with 
$\alpha_n$ defined on some Zariski open affine $X_n\subset X$, 
with $X=\cup_n X_n$, such that
the restrictions of $\alpha_n$
to some common open affine subvariety
$X^\circ\subset \cap_n X_n$ coincide. We will need the following strengthening:

\begin{lemm}
\label{lemm:bloch-ogus}
Let $X$ be a smooth variety with function field $K$ and 
$S=\{ x_1,\ldots, x_r\}\subset X$ a finite set of points. 
Given a class $\alpha_K\in \rH^i_{nr}(G_K)$ there exist 
\begin{itemize}
\item a Zariski open subset
$U_S\subset X$, containing $S$, and 
\item a class $\alpha_S\in \rH^i_{et}(U_S)$
\end{itemize} 
such that $\alpha_K$ and $\alpha_S$ 
coincide on some dense Zariski open subset $U_S^\circ\subset U_S$, 
i.e., for every representation
of $\alpha_K$ by $\{\alpha_n\}_{n\in N}$ 
as above there exists some dense Zariski open   
$U_S^\circ\subset U_S$ 
such that the restrictions of $\alpha_S$ and all $\alpha_n$ to $U_S^\circ$
coincide. 
\end{lemm}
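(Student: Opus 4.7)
The plan is to translate $\alpha_K$ via the cited Bloch--Ogus isomorphism
$\rH^i_{nr}(G_K) = \rH^0_{Zar}(X, \mathcal H^i_{et})$
into a global section of the Zariski sheaf $\mathcal H^i_{et}$, build local representatives near each $x_j \in S$ using the stalk description, and then glue them into a single class in $\rH^i_{et}(U_S)$ on a common open $U_S \supset S$ by a Mayer--Vietoris induction.

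The local step uses that $\mathcal H^i_{et, x_j} = \varinjlim_{V \ni x_j} \rH^i_{et}(V)$: the germ of $\alpha_K$ at each $x_j$ is represented by some $\beta_j \in \rH^i_{et}(V_j)$ on a Zariski open neighborhood $V_j$ of $x_j$, and after shrinking $V_j$ one may assume that $\beta_j$ and each $\alpha_n|_{V_j \cap X_n}$ coming from a given representation of $\alpha_K$ define the same section of $\mathcal H^i_{et}|_{V_j \cap X_n}$, hence coincide on a dense Zariski open subvariety therein. In particular, all such classes have the same image in $\rH^i_{et}(\Spec(K))$ under restriction to the generic point, namely $\alpha_K$ itself.

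The gluing proceeds by induction on $|S|$. Given $\alpha_U \in \rH^i_{et}(U)$ representing $\alpha_K$ on some open $U \supset \{x_1, \ldots, x_{r-1}\}$, the inductive step combines $\alpha_U$ with $\beta_r$ on $V_r \ni x_r$ through the Mayer--Vietoris sequence
\begin{equation*}
\rH^{i-1}_{et}(U \cap V_r) \to \rH^i_{et}(U \cup V_r) \to \rH^i_{et}(U) \oplus \rH^i_{et}(V_r) \to \rH^i_{et}(U \cap V_r),
\end{equation*}
which lifts the pair $(\alpha_U, \beta_r|_{V_r'})$ to a class on $U \cup V_r'$ provided the two restrictions to the overlap agree. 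The principal obstacle is that $\alpha_U$ and $\beta_r$ are only guaranteed to agree on a dense open $W \subset U \cap V_r$; one must shrink $V_r$ to $V_r' \ni x_r$ with $V_r' \cap U \subset W$, and, when the closure of the disagreement locus $(U \cap V_r) \setminus W$ meets $x_r$, simultaneously shrink $U$ to remove that closure (a proper closed subvariety of $X$) while preserving $\{x_1, \ldots, x_{r-1}\} \subset U$. The freedom to rebalance between shrinking $U$ and $V_r$, and to replace $\beta_r$ by any other representative of its germ at $x_r$, provides enough room to carry through the induction; the resulting $\alpha_S$ lives on $U_S = U \cup V_r'$, and $U_S^\circ \subset U_S$ can be taken to be any dense Zariski open on which $\alpha_S$ coincides with every $\alpha_n$ from the given representation.
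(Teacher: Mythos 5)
Your proposal follows the same broad strategy as the paper (Mayer--Vietoris gluing of local representatives), but it is organized as an induction on $|S|$ rather than as a single refinement of the cover. Both versions hinge on the same crucial step: arranging, by shrinking, that the two classes being glued agree \emph{exactly} on the overlap, not merely generically. That step has a genuine gap in your argument.

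Concretely, in the inductive step the disagreement locus is $Z := (U\cap V_r)\setminus W$, closed in $U\cap V_r$, and you need to shrink so that the new overlap lies in $W$. When $x_r\in U$, one can show (using the injectivity of $\rH^i_{et}(\cO_{X,x_r})\to\rH^i(K)$, a form of the Bloch--Ogus Gersten resolution at the local ring --- a nontrivial input you do not name) that $x_r\notin \bar Z$, and then $V_r':=V_r\setminus\bar Z$ works. But when $x_r\notin U$, the closure $\bar Z$ may well contain $x_r$, so $V_r$ cannot be shrunk; and $\bar Z$ may simultaneously contain some $x_j$ with $j<r$ and $x_j\notin V_r$ (the local injectivity only rules out $x_j\in Z$ for $x_j\in U\cap V_r$, not $x_j\in \bar Z\setminus Z$ for $x_j$ outside $V_r$), so $U$ cannot be shrunk either. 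The remaining freedom --- replacing $\beta_r$ by another representative of the germ at $x_r$ --- is asserted to resolve this, but that assertion is exactly what needs proof: it amounts to the claim that $\alpha_U|_{U\cap V_r}$ extends across $x_r$ after shrinking $V_r$, which is a form of the semi-local Gersten/Bloch--Ogus surjectivity and does not follow from simple Zariski-topological manipulations. The clean route to the lemma is to invoke exactness of the Gersten complex for the regular semi-local ring $\cO_{X,S}$ (Bloch--Ogus, Panin, or Colliot-Th\'el\`ene--Hoobler--Kahn): this gives directly that $\alpha_K$ lifts to $\rH^i_{et}(\cO_{X,S})=\varinjlim_{U\supset S}\rH^i_{et}(U)$, which is the statement. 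As it stands, your gluing argument does not establish this without that input, and the final sentence of your write-up (``provides enough room'') should be replaced by a reference to the semi-local Gersten theorem or a genuine proof of the needed extension property.
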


\begin{proof} 
Fix a representation of $\alpha_K$ by $\{ \alpha_n\}_{n\in N}$ as above; in particular
$\alpha_{n}-\alpha_{n'}=0$ on $X_n\cap X_{n'}$, for all $n,n'\in N$.  
Every point in $S$ is contained in some $X_n$; 
refining the cover by Zariski open subsets,
we can achieve that each point in $S$ is contained in exactly one such subset;
i.e., after relabeling, we may assume that $x_n\subset X_n$, for $n=1,\ldots, r$. 
Put 
$$
X_S^\circ:=\cap_{n=1}^r X_n\quad \text{and} \quad
X_n^\circ:= X_n \setminus \left[
\left(
(X\setminus X_n)\cup (X \setminus X_{n'})
\right) 
\cap (X\setminus X_S^\circ) \right]. 
$$
If $x_n\in X_n$ then $x_n \in X_n^\circ$. Moreover, 
$$
X_S^\circ=X_n^\circ\cap X_{n'}^\circ, \quad \text{for all} \quad n,n'\in [1,\ldots, r].
$$ 
Define
$$
U_S:=\cup_{n=1}^r X_n^\circ. 
$$
The restrictions of $\alpha_n$ to $X_n^\circ$ 
can be glued to a unique class $\alpha_S\in \rH^i_{et}(U_S)$, 
via the Mayer-Vietoris
exact sequence. This is the required class. 
\end{proof}

Fix a representation of $\alpha_K\in \rH^i_{nr}(G_K)$ by 
$\{\alpha_n\}_{n\in N}$ as above. 
Each class $\alpha_n\in \rH^i_{et}(X_n)$ is represented by a finite 
collection $\{ X_{nm}\}$ of affine charts $X_{nm}$, with $\cup X_{nm}=X_n$ and finite
\'etale covers 
$$
\psi_{nm} :\tilde{X}_{nm}\ra X_{nm},
$$
such that the restrictions $\alpha_{nm}:=\alpha_n|_{X_{nm}}$ are induced from 
homomorphisms $\chi_{nm}:\hat{\pi}_1(X_{nm})\ra G_{nm}$ onto finite groups. 
Proposition~\ref{prop:bk-apply} 
implies that there is further refinement of the cover by affine subcovers
$$
X=\cup_j X_j,
$$
such that for each $j$ there exist 
\begin{itemize}
\item a finite {\em abelian} group $G_j^a$
\item a surjection $\chi_{K,j}:G_K\ra G^a_j$, and
\item a class $\alpha_j\in \rH^i(G^a_j)$ inducing $\alpha$ via $\chi_{K,j}$. 
\end{itemize}  
Corollary~\ref{coro:indd} implies that there exists a finite quotient $\pi_c: G_K\ra G^c$ 
onto a central extension of an abelian group $G^a$
such that the projections $\chi_{K,j}$ factor through $G^c$ 
and the images of $\alpha_j$ in 
$\rH^i(G^c)$ coincide. In particular, $\alpha_K$ is induced from $\alpha^c\in \rH^i(G^c)$. 

We claim that $\alpha^c$ is unramified on every pair $(\pi_c(I_{\nu}), \pi_c(D_{\nu}))$, 
for $\nu\in \Val_K$. Indeed, for each $\nu$, $\chi_{K,j}(I_{\nu})=0$ on at least 
{\em one} of the charts $X_j$, thus $\alpha_j$ is induced from 
$\chi_{K,j}(D_{\nu})/\chi_{K,j}(I_{\nu})$; since $\alpha^c\in\rH^i(G^c)$ 
is induced from $\alpha_j$, we have the same property for $\alpha^c$, with respect to the 
pair $(\pi_c(I_{\nu}), \pi_c(D_{\nu}))$.

\

The rest of the argument is similar to the proof of Lemma~\ref{lemm:stable}. Let 
$G_K\ra G^a$ be an intermediate quotient surjecting onto each $G^a_j$ and
$$
G_K\ra \tilde{G}^a\ra G^a
$$ 
the intermediate finite
quotient constructed in Proposition~\ref{prop:delta-pairs}. 
In particular, the projection of every $\Delta$-pair in $\tilde{G}^a$ to $G^a_j$ is 
either of the form $(\pi_{a,j}(I_{\nu}), \pi_{a,j}(D_{\nu}))$, 
for some $\nu\in \Val_K$, or {\em cyclic}. 
Let $\tilde{G}^c$ be a central extension as in 
Corollary~\ref{coro:intermediate-factor}, surjecting onto $G^c$. 
We have classes $\tilde{\alpha}_j^a\in \rH^j(\tilde{G}^a)$, 
induced by $\alpha_j^a$ constructed above, and mapping to the same 
class $\tilde{\alpha}^c\in \rH^j(\tilde{G}^c)$. 

We claim that $\tilde{\alpha}^c\in \rH^j_{s,nr}(\tilde{G}^c)$. Indeed, for 
every $\Delta$-pair $(\tilde{I},\tilde{D})$ in $\tilde{G}^a$ either 
$\tilde{D}$ projects to a cyclic group in $G^a$ or it is extendable, i.e., image of some
$(I_{\nu},D_{\nu})$. In the first case, 
all elements $\tilde{\alpha}_j$ are unramified on $(\tilde{I},\tilde{D})$. 
In the second case, at least one of the $\tilde{\alpha_j}$  is unramified on it.

\section{Reduction to the smooth case}
\label{sect:singular}

In absense of resolution of singularities in positive characteristic, we reduce to 
the smooth case via the de Jong-Gabber alterations theorem (see \cite{illusie}): 
The Galois group $G_K$ contains a subgroup $G_{\tilde{K}}$ of finite index, 
coprime to $\ell$, such that
\begin{itemize}
\item 
The function field $\tilde{K}$ corresponding to $G_{\tilde{K}}$ admits a smooth model, i.e., 
there exists a finite cover
$$
\rho: \tilde{X}\ra X, 
$$
of degree $|G_K/G_{\tilde{K}}|$ with $\tilde{X}$ smooth and 
$\tilde{K}=k(\tilde{X})$.
\end{itemize}

Let $\alpha_K\in \rH^n_{nr}(G_K)$ be an unramified class. Its restriction 
$\alpha_{\tilde{K}}$ to a class in $\rH^n(G_{\tilde{K}})$ is also unramified.  
By results in Section~\ref{sect:smooth}, there exists a surjection
\begin{equation}
\label{eqn:reff}
G_{\tilde{K}}\ra \tilde{G}^c
\end{equation}
onto a finite abelian $\ell$-group such that $\alpha_{\tilde{K}}$ is unduced from a class in 
$\tilde{\alpha}^c\in \rH^n_{s,nr}(\tilde{G}^c)$.

\begin{lemm}
\label{lemm:store}
There exists a diagram

\

\centerline{
\xymatrix{
G_{\tilde{K}} \ar[d] \ar[r] & \tilde{G} \ar[d] \ar@{>>}[r]^{\tilde{\pi}_c} & \tilde{G}^c\\
G_K               \ar[r] &  G                            & 
}
}

\

\noindent
where the vertical arrows are injections, 
with image of index coprime to $\ell$, $\tilde{G}$ and $G$ are finite groups, and  
$\alpha_K$ is induced from an element $\alpha_G\in \rH^n_{nr}(G)$. 
In particular, $\Syl_{\ell}(G)\simeq\Syl_\ell(\tilde{G})$. 
\end{lemm}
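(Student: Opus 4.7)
The plan is to manufacture $G$ as a finite quotient of $G_K$ that simultaneously extends the surjection $G_{\tilde{K}} \twoheadrightarrow \tilde{G}^c$ and realizes $\alpha_K$, using the normal-core trick to convert a subgroup of $G_{\tilde{K}}$ into a normal subgroup of the larger group $G_K$. Concretely, I would set $N := \Ker(G_{\tilde{K}} \twoheadrightarrow \tilde{G}^c)$, an open (finite $\ell$-power index) subgroup of $G_{\tilde{K}}$, and form
$$
M := \bigcap_{g \in G_K} g N g^{-1},
$$
a finite intersection (since $G_K$ is profinite and $N$ is open) of open subgroups, hence open in $G_K$ and normal by construction. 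Put $G := G_K/M$ and $\tilde{G} := G_{\tilde{K}}/M$. Because $M \subseteq N \subseteq G_{\tilde{K}}$, the surjection $\phi : G_K \twoheadrightarrow G$ satisfies $\phi^{-1}(\tilde{G}) = G_{\tilde{K}}$, which forces $[G : \tilde{G}] = [G_K : G_{\tilde{K}}] =: m$ to be coprime to $\ell$; and the further quotient $G_{\tilde{K}}/M \twoheadrightarrow G_{\tilde{K}}/N$ provides $\tilde{G} \twoheadrightarrow \tilde{G}^c$. Coprimality of the index at once implies $\Syl_\ell(\tilde{G}) \simeq \Syl_\ell(G)$.

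Next I would build the cohomology class $\alpha_G$ by transfer. Let $\tilde{\alpha}_{\tilde{G}} \in \rH^n(\tilde{G})$ be the inflation of $\tilde{\alpha}^c$ along $\tilde{G} \twoheadrightarrow \tilde{G}^c$; by construction of $\tilde{G}^c$, its further inflation to $G_{\tilde{K}}$ equals $\alpha_{\tilde{K}} := \mathrm{res}^{G_K}_{G_{\tilde{K}}}(\alpha_K)$. Since $m$ is a unit in $\Z/\ell^n$, I would set
$$
\alpha_G := \tfrac{1}{m}\,\mathrm{cor}^G_{\tilde{G}}(\tilde{\alpha}_{\tilde{G}}) \in \rH^n(G,\Z/\ell^n).
$$
The identity $\phi^*(\alpha_G) = \alpha_K$ then follows from the standard compatibility of corestriction with pullback applied to the commutative square with horizontal inclusions $G_{\tilde{K}} \hookrightarrow G_K$ and $\tilde{G} \hookrightarrow G$ and vertical surjections — this compatibility is valid precisely because $G_{\tilde{K}} = \phi^{-1}(\tilde{G})$ — combined with the classical formula $\mathrm{cor} \circ \mathrm{res} = m$.

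Finally, reading $\alpha_G \in \rH^n_{nr}(G)$ as the condition that $\phi^*(\alpha_G)$ lies in $\rH^n_{nr}(G_K)$ — equivalently, that for every $\nu \in \Val_K$ the restriction of $\alpha_G$ to $\phi(D_\nu)$ is inflated from $\phi(D_\nu)/\phi(I_\nu)$ — the unramified property of $\alpha_G$ follows immediately from the hypothesis that $\alpha_K$ itself is unramified, together with the equality $\phi^*(\alpha_G) = \alpha_K$ just established. The principal obstacle is essentially bookkeeping: one must carry the surjection $G_{\tilde{K}} \twoheadrightarrow \tilde{G}^c$, defined only on the subgroup $G_{\tilde{K}}$, up to an extension defined on all of $G_K$ without destroying either the $\ell$-power structure above or the coprime-to-$\ell$ index below. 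The normal core of $N$ achieves exactly this, and once $G$ and $\tilde{G}$ are in place the cohomological argument reduces to the well-known transfer identities.
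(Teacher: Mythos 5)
Your construction of $G$ and $\tilde G$ via the normal core $M=\bigcap_{g\in G_K} gNg^{-1}$ is correct and in fact supplies an explicit version of what the paper disposes of with the phrase ``this is a standard fact in Galois theory.'' The transfer computation producing $\alpha_G:=\frac1m\,\mathrm{cor}^G_{\tilde G}(\tilde\alpha_{\tilde G})$ with $\phi^*(\alpha_G)=\alpha_K$ is also sound: compatibility of corestriction with inflation holds here precisely because $M\lhd G_K$ and $M\subseteq G_{\tilde K}$, and $m$ is a unit in $\Z/\ell^n$.

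The gap is in the very last step, and it is a genuine one. You ``read $\alpha_G\in\rH^n_{nr}(G)$ as the condition that $\phi^*(\alpha_G)$ lies in $\rH^n_{nr}(G_K)$.'' But that is not the paper's definition. For a \emph{finite} group $G$, the paper (Section~\ref{sect:unram-cohomology}) defines $\rH^*_{s,nr}(G)$ intrinsically: a class is unramified iff it dies under every residue $\partial_\nu$ for $\nu\in\DVal_L$, where $L=k(V/G)$ is the field of invariants of a \emph{faithful} representation $V$ of $G$ — not the given function field $K=k(X)$. The condition ``$\phi^*(\alpha_G)$ is unramified over $K$'' only tests residues at valuations of $K$, which is a priori weaker; showing the two notions agree is essentially the content of the main theorem, not something one can take as a definition inside its own proof. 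As written, your argument establishes nothing about $\alpha_G$ beyond what was already assumed about $\alpha_K$.

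The paper closes this gap by a transfer argument on the universal side: $\tilde\alpha^c$ lies in $\rH^*_{s,nr}(\tilde G^c)$ by the construction of Section~\ref{sect:smooth}, its inflation $\tilde\alpha$ to $\tilde G$ along the surjection $\tilde\pi_c$ is again stably unramified, and then the coprime-to-$\ell$ index $[G:\tilde G]$ lets one propagate unramifiedness up to $\alpha_G$ via $\mathrm{cor}\circ\mathrm{res}=[G:\tilde G]$ applied over $k(V/G)$, using that the ramification residue at a divisorial valuation of $k(V/G)$ is annihilated by restricting to $k(V/\tilde G)$ and corestricting back. To repair your proof you would need to replace the redefinition in your last paragraph with exactly this argument — in particular you must argue (i) that inflation along $\tilde G\twoheadrightarrow\tilde G^c$ preserves stable unramifiedness, and (ii) that for a subgroup $\tilde G\subset G$ of index coprime to $\ell$, a class whose restriction to $\tilde G$ is stably unramified is itself stably unramified.
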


\begin{proof}
Fix a finite continuous quotient $G_K\ra G'$ such that $\alpha_K$ is induced from some
$\alpha_{G'}\in \rH^i(G')$. 
Note that for every intermediate quotient
$$
G_K\ra G\ra G'
$$
there exists an $\alpha_G\in \rH^i(G)$ inducing $\alpha_K$. 
It suffices to find a sufficiently large $G$ such that the sujection \eqref{eqn:reff} factors through a subgroup of $G$. This is a 
standard fact in Galois theory. 

Since $\tilde{\alpha}^c\in \rH^i_{s,nr}(G^c)$ its image 
$\tilde{\alpha} \in \rH^i(\tilde{G})$ is also unramified.
Since the index $(G:\tilde{G})$ is coprime to $\ell$, 
and since the unramified $\tilde{\alpha}$ is 
induced from an element $\alpha_G\in \rH^i(G)$, 
$\alpha_G$ is also unramified, as claimed.  
\end{proof}

At this stage, we cannot yet guarantee that $G$ 
is a central extension of an abelian group, nor that it is an $\ell$-group. 
However, we know that
$$
\mathrm{tr}(\mathrm{res}_{G_K/G_{\tilde{K}}}(\alpha_K))=\alpha_K \in \rH^i_{nr}(G_K), 
$$
modulo multiplication by an element in $(\Z/\ell^n)^\times$.

We need the following version of resolution of 
singularities in positive characteristic:

\begin{thm} \cite{resolve}
\label{thm:resolve}
Let $G$ be a finite $\ell$-group and 
$Y$ a smooth variety with a generically free action of $G$. 
Then there exists a $G$-variety $\tilde{Y}$ with
a proper $G$-equivariant birational map $\tilde{Y}\ra Y$ such that 
$\tilde{Y}/G$ is smooth.
\end{thm}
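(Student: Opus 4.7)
The plan is to produce $\tilde{Y}$ by a sequence of smooth $G$-equivariant blowups. The starting observation is that every stabilizer $G_y \subset G$ is itself a finite $\ell$-group, and since we work over $k=\bar{\F}_p$ with $\ell \ne p$ these stabilizers are linearly reductive. Consequently, for any subgroup $H \subseteq G$ the fixed locus $Y^H$ is smooth, and the local action of $G_y$ on the tangent space $T_yY$ is faithfully linearizable in a formal (in fact étale) neighborhood of $y$. Thus the problem of making the quotient smooth becomes a local, linear, characteristic-zero-flavored problem at each stratum.

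The key criterion is Chevalley--Shephard--Todd: for $\tilde Y$ smooth with a finite group action in characteristic coprime to the order, $\tilde Y/G$ is smooth at the image of $\tilde y$ if and only if $G_{\tilde y}$ acts on $T_{\tilde y}\tilde Y$ as a group generated by pseudo-reflections. So I would stratify $Y$ by stabilizer type $(H, \rho)$ where $\rho$ records the isomorphism class of the tangential representation, and I would call a stratum \emph{bad} when $\rho$ is not a pseudo-reflection representation. Bad strata are locally closed, smooth, $G$-invariant subvarieties.

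Next I would run equivariant blowup. Pick a bad stratum $S \subset Y$ of maximal stabilizer order (or, say, of minimal dimension among those of maximal stabilizer); it is smooth and $G$-stable, so the blowup $Y' := \Bl_S Y$ is again a smooth $G$-variety with a proper equivariant birational morphism to $Y$. On $Y'$, the stabilizer of a point on the exceptional divisor $E$ over $y\in S$ is the stabilizer in $G_y$ of a line in the normal space $N_{S/Y,y}$; this kind of stabilizer is strictly smaller, or else it acts on the tangent space of $Y'$ via a representation with an additional one-dimensional eigendirection along $E$, pushing the offending representation closer to pseudo-reflection type. Iterating, and carefully choosing an invariant $(\lvert H\rvert, \dim T_yY - \dim S, \ldots)$ that strictly drops, one eliminates all bad strata after finitely many blowups.

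The main obstacle is showing termination: each blowup may create new components of the singular locus of the quotient, and one has to argue that a well-chosen lexicographic invariant attached to the stabilizer/representation strictly decreases. This is exactly the content of the equivariant resolution theorems for tame actions (Abramovich--de Jong--Oort, Abramovich--Wang, Bergh, and in this generality \cite{resolve}); they provide the needed functorial invariant and the combinatorial argument that the process terminates. Once termination is established, every stabilizer on the final $\tilde Y$ is generated by pseudo-reflections, and $\tilde Y/G$ is smooth by Chevalley--Shephard--Todd.
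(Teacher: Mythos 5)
Your proposal takes a genuinely different route from the paper. The paper follows the Gabber--Illusie--Temkin machinery in three steps: first apply Theorem VIII.1.1 of \cite{resolve} to get an equivariant modification $Y'\to Y$ carrying a regular log structure for which the action is \emph{very tame} (all stabilizers abelian, acting as subgroups of tori in toroidal charts); second invoke Theorem VI.3.2 to conclude that $Y'/G$ is then log regular; third resolve the log regular $Y'/G$ by Theorem VIII.3.4.9 and pull back to get $\tilde Y$. Nowhere does the paper ask stabilizers to become pseudo-reflection groups; it makes them abelian and toroidal, and then resolves the quotient as a toroidal variety. Your plan instead aims for the strictly stronger condition that every stabilizer on $\tilde Y$ acts by a reflection representation, which would make $\tilde Y$ itself smooth as well; this is the ``destackification'' strategy (Bergh, Abramovich--Temkin--W{\l}odarczyk), and it is a legitimate alternative.

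That said, the intermediate justification you give for termination is not correct as stated. It is not true that the stabilizer of a point on the exceptional divisor over $y\in S$ is strictly smaller than $G_y$, nor that the tangential representation monotonically approaches pseudo-reflection type. For instance, take $G=\Z/3$ acting on $\A^2$ over $k$ with $\ch(k)\neq 3$ by $(x,y)\mapsto(\zeta x,\zeta^2 y)$; after one blowup of the origin, the chart $(u,v)=(x,y/x)$ carries the action $(u,v)\mapsto(\zeta u,\zeta v)$, still with full stabilizer $\Z/3$ at the origin and still not a pseudo-reflection representation; a second blowup is needed. So the invariant $(\lvert H\rvert,\ \dim T_yY-\dim S,\ \ldots)$ you sketch does not strictly drop, and filling in the ``$\ldots$'' is precisely the nontrivial part. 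You are candid that you are outsourcing termination to the literature, and that is fine, but without a correct invariant the sketch you give does not constitute a proof, and it is substantially farther from being one than the paper's sketch, which points to a specific chain of theorems (VIII.1.1 $\Rightarrow$ very tame, VI.3.2 $\Rightarrow$ log regular quotient, VIII.3.4.9 $\Rightarrow$ resolution) with no gap between them.
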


We are very grateful to D. Abramovich for providing 
the reference and indicating the main steps of the proof in \cite{resolve}:
\begin{itemize}
\item Theorem VIII.1.1 gives an equivariant modification 
$Y'$ of $Y$ with a regular log structure on $Y'$ 
such that the action is {\em very tame}, i.e., 
the stabilizers in $G$ of points in $Y'$ 
are abelian and act as subgroups of tori in 
toroidal charts of the log structure $Y'$.
\item By Theorem VI.3.2, the quotient $Y'/G$ 
of a log regular variety by a very tame action is log regular.
\item By Theorem VIII.3.4.9, which is a step in Theorem VIII.1.1, 
a log regular variety has a resolution of singularities.
\end{itemize}

\

We return to the proof of Theorem~\ref{thm:mainn}.
Start with a suitable faithful representation $V$ of $G$, 
and thus of $\Syl_{\ell}(G)=\Syl_{\ell}(\tilde{G})$, and construct a diagram

\

\centerline{
\xymatrix{
  \bar{V}\ar[r]^{\!\!\!\!\!\pi_Y\hskip 1cm}\ar[d]_{\pi_G} & Y=\tilde Y/\Syl_\ell(G)\\
 \bar{V}/G            &   
}
}

\

\noindent
where $Y=\tilde{Y}/\Syl_{\ell}(G)$ is the smooth projective variety from 
Theorem~\ref{thm:resolve}, and $\pi_Y$ is a $\Syl_{\ell}(G)$-equivariant
map from a $G$-equivariant projective closure of $V$.
Let $L=k(Y)$ be the function field of $Y$. 
Given a class $\alpha_L\in \rH^i_{nr}(G_L)$ we 
have a covering of $Y=\cup_n Y_n$ by affine Zariski open subsets 
and a finite set of classes $\{\alpha_n\}$ representing $\alpha_L$, as
considered in Section~\ref{sect:smooth}.

Pick a point $v\in \bar{V}$. The image $S:=\pi_Y(G\cdot b)$ of its $G$-orbit 
is a finite set of points. By Lemma~\ref{lemm:bloch-ogus}, there exist 
a dense Zariski open subset $U_S$ and a class $\alpha_S\in \rH^i_{et}(U_S)$
coinciding with $\alpha_L$ on some dense Zariski open subset $U_S^\circ\subset U_S$. 
Its preimage 
$$
\bar{U}_S^\circ:=\pi_Y^{-1}(U_S^\circ)\subset \bar{V}
$$ 
is a dense Zariski open subset 
containing $G\cdot v$. Put 
$$
\bar{U}_v:=\cap_{g\in G}\,\, g(\bar{U}_S^\circ)\subset \bar{V},
$$
it is a $G$-stable dense Zariski open subvariety containing $v$. 
Its image $\pi_G(\bar{U}_v)\subset \bar{V}/G$ 
is a Zariski open subset containing $\pi_G(G\cdot v)$. 
Note that $\pi_v : \bar{U}_v/\Syl_\ell(G)\to U_S$
is a birational morphism to an open subset and $\pi_v^*(\alpha_S)$
is well-defined in \'etale cohomology of $\bar{U}_v/\Syl_\ell(G)$.
It follows that the trace 
$$
\mathrm{tr}_{\pi_G}(\pi_v^*(\alpha_S))\in \rH_{et}^i(\bar{U}_v/G)
$$ 
is well-defined and coincides with $(G:\Syl_\ell(G)) \cdot \alpha_S$ at
the generic point of $\bar{V}/G$.

Thus we have a covering of $\bar{V}/G$ by Zariski open subsets of the form 
$\bar{U}_v/G$, $v\in \bar{V}$, with cohomology classes representing 
$\alpha_K$ on each chart. 
There exists a finite subcovering by $\bar{U}_v/G$ with extensions of $\alpha_S$
to each  $\bar{U}_v/G$. We can now apply Proposition~\ref{prop:bk-apply} 
to produce a finite subcover
such that on each chart, 
the class is induced from homomorphisms onto finite {\em abelian} 
groups, and proceed as in Section~\ref{sect:smooth}. 

\

\bibliographystyle{plain}
\bibliography{bloch}
\end{document}